\documentclass[a4paper, 11pt]{article}
\usepackage{latexsym,amsfonts,palatino,eulervm,amsthm,graphicx,verbatim, float, mathdots, bm}
\usepackage[margin=2cm]{geometry}
\usepackage{amsmath}
\usepackage{algorithm}
\usepackage[noend]{algpseudocode}
\usepackage{amsthm}
\usepackage{amsfonts,mathrsfs}
\usepackage{multicol,soul}
\usepackage{multirow,bigdelim}
\usepackage{fancyvrb}
\usepackage{tkz-graph}
\usepackage[affil-it]{authblk}
\usepackage{hyperref}
\usepackage{xcolor}
\usepackage{nicematrix}
\usepackage{tikz}
\usepackage{mathtools}
\usepackage[inline]{enumitem}

\allowdisplaybreaks[2]

\newcommand{\cover}{\mathrel{\prec\!\!\cdot}}

\newcommand\SmallArray[1]{{%
  \tiny \arraycolsep=0.08\arraycolsep\ensuremath{\begin{array}{c}#1\end{array}}}}

\newcommand\Matrix[1]{{%
  \small \arraycolsep=0.8\arraycolsep\ensuremath{\begin{pmatrix}#1\end{pmatrix}}}}

\makeatletter
\def\BState{\State\hskip-\ALG@thistlm}
\makeatother

\algnewcommand\And{\textbf{and} }

\def\L{\mathcal{L}}
\def\C{\mathcal{C}}
\def\Ltwo{\mathcal{K}}

\newtheorem{theorem}{Theorem}[section]
\newtheorem{lemma}[theorem]{Lemma}
\newtheorem{corollary}[theorem]{Corollary}

\theoremstyle{remark}

\newtheorem*{thm}{Theorem}

\theoremstyle{definition}
\newtheorem{definition}[theorem]{Definition}
\newtheorem{example}[theorem]{Example}

\def\Ccal{\mathcal{C}}
\def\pos{\operatorname{pos}}
\def\sym{\operatorname{sym}}

\title{A Bruhat order for Latin squares and alternating sign hypermatrices}
\date{\vspace{-1.5cm}}
\author{%
  \href{mailto:angela.carnevale@universityofgalway.ie}{Angela Carnevale}%
  \footnote{School of Mathematical and Statistical Sciences, University of Galway, Ireland} and \href{mailto:obrien.cian@outlook.com}{Cian O'Brien}%
  \footnote{Department of Mathematics and Computer Studies, Mary Immaculate College, Ireland}}

\begin{document}
\setlength{\parindent}{0pt}
\setlength{\parskip}{1ex}

\maketitle

\begin{abstract}
\setlength{\parindent}{0pt}
\setlength{\parskip}{1ex}

\noindent The Bruhat order on permutation matrices extends to alternating sign
matrices via corner-sum matrices, where the order is given by
entrywise domination. A classical result of Lascoux and
Sch\"utzenberger states that alternating sign matrices form the
Dedekind--MacNeille completion of the Bruhat order on permutations.

Brualdi and Dahl introduced alternating sign hypermatrices as a
three-dimensional analogue of alternating sign matrices and used them
to generalise Latin squares, which may be viewed as three-dimensional
analogues of permutation matrices.

In this paper, in analogy with the two-dimensional case, we define and
study a Bruhat order $\preceq_B$ on Latin squares and alternating sign
hypermatrices.  We introduce the corresponding corner-sum
hypermatrices $\mathcal C_n$ and prove that entrywise domination on
$\mathcal C_n$ encodes this order. We show that $\mathcal C_n$ is a
distributive lattice, but that, unlike in dimension two, it is not the
Dedekind--MacNeille completion of the poset of Latin squares. We
further characterise the covering relations for $\C_n$ and prove rank formulae
generalising the classical case of alternating sign matrices. Finally,
we define monotone hypertriangles, prove that they are in bijection
with $\mathcal C_n$, and show that they also encode the order by
entrywise domination.

\end{abstract}

{\small
\textit{2020 Mathematics Subject Classification.}
 05B15, 15B36, 06A07

\textit{Keywords.} Latin squares, Bruhat order, alternating sign hypermatrices, corner-sum hypermatrices,  distributive lattices, Dedekind–MacNeille completion.
}

\section{Introduction}
This paper is devoted to the study of a new Bruhat order on Latin squares and alternating-sign hypermatrices, as natural three-dimensional analogues of permutations, alternating-sign matrices and their Bruhat order.
\subsection{The Bruhat order for permutations}\label{sec:bruhatperm}

A partially-ordered set, or \emph{poset}, is a set equipped with a partial order. A partial order $\preceq$ on a set $S$ is a relation that is reflexive ($x \preceq x$ for all $x \in S$), antisymmetric ($x \preceq y$ and $y \preceq x$ implies $x = y$), and transitive (if $x \preceq y$ and $y \preceq z$, then $x \preceq z$).  For $x,y \in S$, we write $x \prec y$ to denote $x\preceq y$ and $x\ne y$. An element $y \in S$ \emph{covers} $x \in S$, denoted $x\cover  y$, if $x \prec y$ and there is no $z \in S$ such that $x \prec z \prec y$. In a \emph{Hasse diagram}, each element is drawn above (i.e. with a higher vertical coordinate than) the elements they cover, and a line indicates a covering relation. A poset $(S, \preceq)$ is a \emph{lattice} if every pair of elements in $S$ has a unique least upper bound (supremum) and a unique greatest lower bound (infimum).

The set $S_n$ of permutations of $[n]:=\{1,2,\dots,n\}$ forms a poset under the \emph{Bruhat order} $\preceq_B$. We denote by $\sigma = \sigma_1\sigma_2\dots\sigma_n$ the permutation in $S_n$ satisfying $\sigma(i)=\sigma_i$ for $i\in[n]$. An \emph{inversion} in a permutation $\sigma$ is a pair $(\sigma_i,\sigma_j)$ for which $\sigma_i>\sigma_j$ and $i<j$. For example, the permutation $\sigma = 2314$ has two inversions; $(\sigma_1,\sigma_3) = (2,1)$ and $(\sigma_2,\sigma_3) = (3,1)$. Two permutations $\pi$ and $\sigma$ satisfy $\pi\preceq_B\sigma$ if $\pi$ can be obtained from $\sigma$ only by applying transpositions that decrease the number of inversions.

A permutation of $[n]$ can be naturally viewed as a permutation matrix. With such representation, an inversion in a permutation matrix $P$ is a (not necessarily contiguous) copy of $J_2 = \begin{pmatrix}
    0&1\\1&0
\end{pmatrix}$ as a submatrix of $P$. Hence, permutation matrices $P$ and $Q$ satisfy 
$P \preceq_B Q$ if and only if $P$ can be obtained from $Q$ by repeatedly replacing instances of $J_2$ with  $I_2 = \begin{pmatrix}
    1&0\\0&1
\end{pmatrix}$. For example,
\[P = \begin{pmatrix}0&0&1\\0&1&0\\1&0&0\end{pmatrix} \preceq_B \begin{pmatrix}0&1&0\\0&0&1\\1&0&0\end{pmatrix} = Q,\]
since $P$ results from removing the inversion in the upper-right corner of $Q$. The Hasse diagram of the permutation matrices of order 3 under the Bruhat order is displayed in Figure~\ref{fig:S3}.
\begin{figure}[h]
\begin{center}\begin{NiceTabular}{ccc}
&$\begin{pmatrix}0&0&1\\0&1&0\\1&0&0\end{pmatrix}$&\\

$\begin{pmatrix}0&1&0\\0&0&1\\1&0&0\end{pmatrix}$&\phantom{$\begin{pmatrix}0\\0\\0\\0\\0\end{pmatrix}$}&$\begin{pmatrix}0&0&1\\1&0&0\\0&1&0\end{pmatrix}$\\

$\begin{pmatrix}0&1&0\\1&0&0\\0&0&1\end{pmatrix}$&\phantom{$\begin{pmatrix}0\\0\\0\\0\\0\end{pmatrix}$}&$\begin{pmatrix}1&0&0\\0&0&1\\0&1&0\end{pmatrix}$\\

&$\begin{pmatrix}1&0&0\\0&1&0\\0&0&1\end{pmatrix}$&\\

\CodeAfter 
\tikz \draw  (1-2) -- (2-1); 
\tikz \draw  (1-2) -- (2-3); 
%%%
\tikz \draw  (3-1) -- (2-1); 
\tikz \draw  (3-1) -- (2-3); 
\tikz \draw  (3-3) -- (2-1); 
\tikz \draw  (3-3) -- (2-3); 
%%%
\tikz \draw  (3-1) -- (4-2); 
\tikz \draw  (3-3) -- (4-2); 
\end{NiceTabular}\end{center}
\caption{Hasse diagram of the Bruhat order on $3\times 3$ permutation matrices.}\label{fig:S3}
\end{figure}

For all $n\in\mathbb{N}$, the identity matrix $I_n$ is the unique minimal element of the poset of permutation matrices under the Bruhat order, and the complete inversion $J_n$ of $I_n$ is the unique maximal element. For $n\ge3$, however, $n\times n$ permutation matrices under the Bruhat order do not form a lattice.

\subsection{Alternating sign matrices and corner-sum matrices}\label{cornersum subsection}

The Bruhat order for permutation matrices can be equivalently defined as follows.
\begin{definition}\label{pm 2d bruhat def} Permutation matrices $P$ and $Q$ satisfy $P \preceq_B Q$ if and only if $P$ can be obtained from $Q$ by the repeated addition of  copies of
\[T=\begin{pmatrix}
    +1&-1\\-1&+1
\end{pmatrix}\]
to (not necessarily contiguous) $2\times2$ submatrices of $P$; see also~\cite{bruhat T}.\end{definition}
This alternative definition of the Bruhat order can be applied not only to permutation matrices, but more widely to \emph{alternating sign matrices}. An alternating sign matrix (ASM) is a $(0,1,-1)$-matrix for which the non-zero entries in each row and column alternate in sign, beginning and ending with $+1$. Where convenient, we represent each $1$ by $+$, each $-1$ by $-$, and omit the $0$ entries. The following is an example of a $4\times4$ ASM.
\[\begin{pmatrix}
    &+&&\\
    &&+&\\
    +&-&&+\\
    &+&&
\end{pmatrix}=\begin{pmatrix}
    0&1&0&0\\
    0&0&1&0\\
    1&-1&0&1\\
    0&1&0&0
\end{pmatrix}\]

An immediate consequence of the definition is that ASMs are square matrices with all row and column sums equal to 1. All permutation matrices are ASMs, and ASMs form the \emph{Dedekind-MacNeille completion} of the permutation matrices under the Bruhat order \cite[Theorem~4.4]{lattice}, which is the smallest lattice into which permutations under the Bruhat order embed. In the lattice of ASMs, $A$ covers $B$ if and only if $B$ can be obtained from $A$ by the addition of a single contiguous copy of $T$.

Set $[0,n]:=\{0\}\cup [n]$. A \emph{corner-sum matrix} of order $n$ is an $(n+1)\times(n+1)$ matrix such that in row and column $k \in[0,n]$, the first entry is 0, the last entry is $k$, and each  entry is either equal to or 1 greater than the previous going from left to right and from top to bottom. The set of corner-sum matrices is in bijection with ASMs \cite{corner-sum}. Figure~\ref{hasse3} shows the bijection for all $3\times3$ ASMs and corner-sum matrices of order $3$.

 Given an $m \times n$ matrix $A$, define the \emph{corner-sum} $\Sigma(A)$ of $A$ to be the $(m+1) \times (n+1)$ matrix with the following $(i,j)$-entry, for all $i \in [0,m]$ and $j \in [0,n]$.
\[\Sigma(A)_{i,j} = \sum_{a = 1}^{i} \sum_{b=1}^j A_{a,b}.\]

A corner-sum matrix $C$ is \emph{the} corner-sum matrix of an ASM $A$ of order $n$ if $C = \Sigma(A)$. 
To see this correspondence is a bijection, note that the entries of $A$ can be recovered from $C = \Sigma(A)$ by $$A_{i,j} = C_{i,j} - C_{i-1,j} - C_{i,j-1} + C_{i-1,j-1}$$ for all $1 \leq i, j \leq n$, and consider the square submatrix of entries from $(i-1,j-1)$ to $(i,j)$.
\begin{itemize}
\item If all four entries are equal, then $A_{i,j} = C_{i,j} - C_{i-1,j} - C_{i,j-1} + C_{i-1,j-1} = 0$.
\item If all are equal except $C_{i,j}$, then $A_{i,j} = C_{i,j} - C_{i-1,j} - C_{i,j-1} + C_{i-1,j-1} = 1$.
\item If $C_{i,j} = C_{i,j-1} > C_{i-1,j-1} = C_{i-1,j}$, then $A_{i,j} = C_{i,j} - C_{i-1,j} - C_{i,j-1} + C_{i-1,j-1} = 0$.
\item If $C_{i,j} = C_{i,j-1} = C_{i-1,j} > C_{i-1,j-1}$, then $A_{i,j} = C_{i,j} - C_{i-1,j} - C_{i,j-1} + C_{i-1,j-1} = -1$.
\item If $C_{i,j} > C_{i,j-1} = C_{i-1,j} > C_{i-1,j-1}$, then $A_{i,j} = C_{i,j} - C_{i-1,j} - C_{i,j-1} + C_{i-1,j-1} = 0$.
\end{itemize}

The Bruhat order on ASMs can be characterised using their corner-sum matrices  \cite[Theorem 1]{bruhat cs}. Indeed, if $A$ and $B$ are ASMs, then
\begin{equation}\label{eq:cornersumASM}
A \preceq_B B \iff \Sigma(A) \geq \Sigma(B),\end{equation}
where $\geq$ indicates that one matrix is greater than or equal to another entrywise. In the corresponding lattice, $C$ covers $D$ if and only if $C-D$ has a single non-zero entry, and its value is equal to $1$.

Moreover, the supremum and infimum of any pair of alternating sign matrices can easily be determined using their corner-sum matrices.

\begin{example}\label{sup inf example}
\[A = \begin{pmatrix}0&1&0\\1&0&0\\0&0&1\end{pmatrix} \implies \Sigma(A) = \begin{pmatrix}0&0&0&0\\0&0&1&1\\0&1&2&2\\0&1&2&3\end{pmatrix} \text{ and } B = \begin{pmatrix}1&0&0\\0&0&1\\0&1&0\end{pmatrix} \implies \Sigma(B) = \begin{pmatrix}0&0&0&0\\0&1&1&1\\0&1&1&2\\0&1&2&3\end{pmatrix}\]

Clearly the unique matrix $X$ with least entries such that $X \geq \Sigma(A)$ and $X \geq \Sigma(B)$ is the matrix $X = \max(\Sigma(A), \Sigma(B))$ with the entrywise maximum values of $\Sigma(A)$ and $\Sigma(B)$. Similarly, the unique matrix $Y$ with greatest entries such that $Y \leq \Sigma(A)$ and $Y \leq \Sigma(B)$ is the matrix $Y = \min(\Sigma(A), \Sigma(B))$ with the entrywise minimum values of $\Sigma(A)$ and $\Sigma(B)$. 
\[\max(\Sigma(A), \Sigma(B)) = \begin{pmatrix}0&0&0&0\\0&1&1&1\\0&1&2&2\\0&1&2&3\end{pmatrix}  \text{ and } \min(\Sigma(A), \Sigma(B)) =\begin{pmatrix}0&0&0&0\\0&0&1&1\\0&1&1&2\\0&1&2&3\end{pmatrix}\]

Inverting the corner-sum operation on the min and max matrices then gives the supremum and infimum, respectively, of $A$ and $B$ under the Bruhat order.
\[\Sigma^{-1}\big(\max(\Sigma(A), \Sigma(B))\big) = \begin{pmatrix}1&0&0\\0&1&0\\0&0&1\end{pmatrix} \text{ and } \Sigma^{-1}\big(\min(\Sigma(A), \Sigma(B))\big) =\left(\begin{array}{rrr}0&1&0\\1&-1&1\\0&1&0\end{array}\right)\]
\end{example}

\begin{figure}[h]
\includegraphics[width=\textwidth]{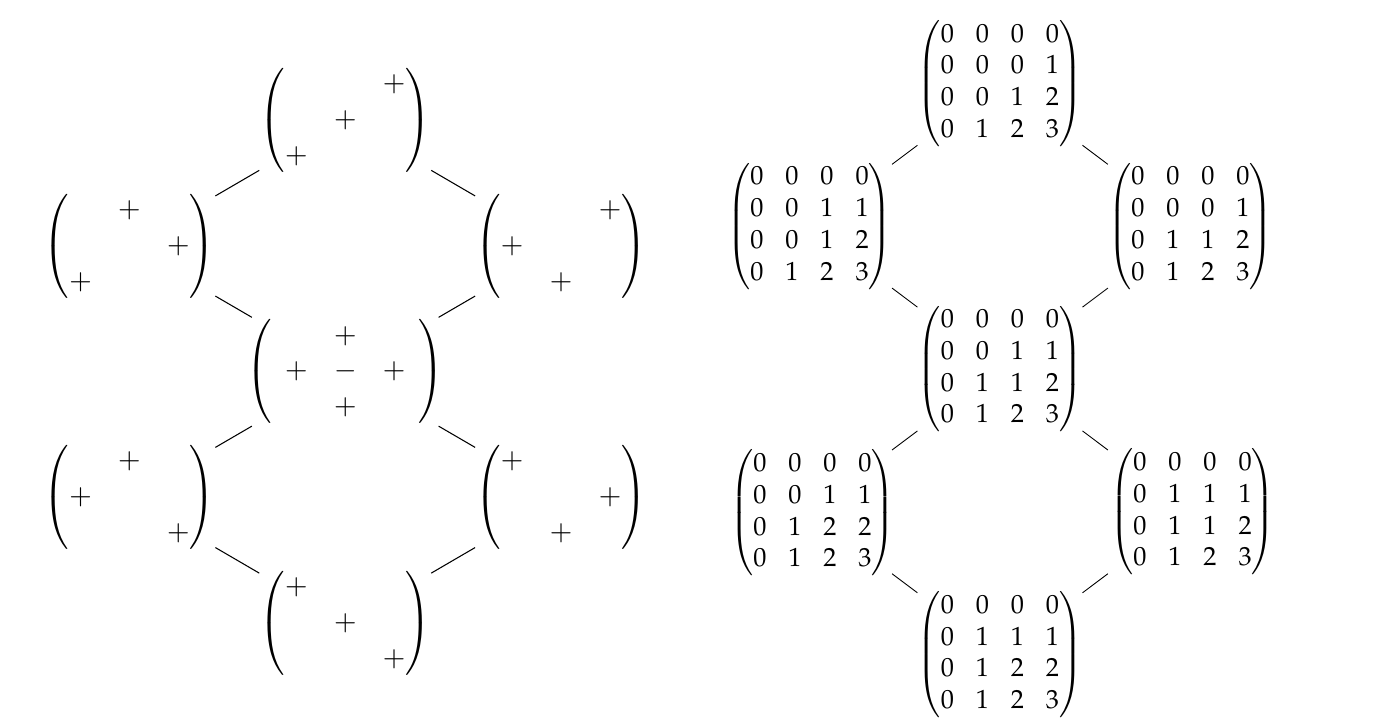}\caption{The Hasse diagrams for the ASMs (left) and corner-sum matrices (right) of order 3 under the Bruhat order.}
\label{hasse3}
\end{figure}

The corner-sum characterisation of the Bruhat order can also be described for permutations, providing useful criteria for the Bruhat relation \cite[Theorem~2.1.5]{comb. coxeter}. We adapt   \cite[Eq.~(2.3)]{comb. coxeter} to match the definition of a corner-sum matrix as follows. Given $\sigma\in S_n$, define $\sigma[i,j] = |\{a\in[j]:\:\sigma(j)\le i\}|$ for all $i,j\in[n]$.

For example, the following table shows all values of $\sigma[i,j]$ for $\sigma = 312$. Shown also are the permutation matrix $P$ with $(\sigma(i),i)$-entry equal to 1, for $i\in[3]$, and $\Sigma(P)$.
\[\begin{array}{l|ccc|}
    & 3\le i & 3,1 \le i & 3,1,2\le i\\
    \hline
    i=1   	& 0 & 1 & 1 \\
    i=2     & 0 & 1 & 2 \\
    i=3     & 1 & 2 & 3 \\
    \hline
\end{array}\hspace{0.3cm}
P = \begin{pmatrix}
    &+&\\
    &&+\\
    +&&
\end{pmatrix}\hspace{0.3cm}
\Sigma(P) = \begin{pmatrix}
    0&0&0&0\\
    0&0&1&1\\
    0&0&1&2\\
    0&1&2&3
\end{pmatrix}\]

For $\sigma,\pi\in S_n$, it follows that $\sigma \preceq_B \pi$ if and only if  $\sigma[i,j]\ge\pi[i,j]$ for all $i,j\in[n]$.

\subsection{Latin squares and hypermatrices}

A \emph{Latin square} of order $n$ is an $n \times n$ array containing $n$ symbols such that each symbol occurs exactly once in each row and column. Throughout the paper, we use $[n]$ as symbols. We denote by $\L_n$ the set of all Latin squares of order $n$. Throughout the paper, we use $\Sigma(L)$ for the corner-sum matrix of $L\in\L_n$.

Fernandes, da Cruz, and Salomão \cite{Latin bruhat} defined a Bruhat order on Latin squares  $L_1, L_2 \in \L_n$ such that $L_1$ precedes $L_2$ if and only if $\Sigma(L_1) \ge \Sigma(L_2)$. In this paper, we propose an alternative definition for the Bruhat order on Latin squares. We do this by interpreting Latin squares as the 3-dimensional analogues of permutation matrices, and generalise the definition in a way that is consistent with this interpretation. While both definitions are related, our definition more closely aligns with the inversion definition of the Bruhat order  on permutations. Throughout the paper, where appropriate, we highlight relationships and differences between these two definitions.

A hypermatrix $A = [A_{ijk}]$ of order $n$ is an $n \times n \times n$ array with $n^2$ lines of each of the three following types. Each line has $n$ entries.
\begin{itemize}
\item \emph{Rows} $A_{i*k} = [A_{ijk} : j = 1, \dots, n]$, for given $1 \leq i,k \leq n$;
\item \emph{Columns} $A_{*jk} = [A_{ijk} : i = 1, \dots, n]$, for given $1 \leq j,k \leq n$;
\item \emph{Vertical lines} $A_{ij*} = [A_{ijk} : k = 1, \dots, n]$, for given $1 \leq i,j \leq n$.
\end{itemize}

We define the $k$th \emph{plane} of $A$ to be the horizontal plane $A_{**k} = [A_{ijk}: 1 \leq i,j \leq n ]$, for given $k \in [n]$. Permutation hypermatrices are $(0,1)$-hypermatrices with exactly one non-zero entry in each row, column, and vertical line.

A Latin square $L\in\L_n$ can be thought of as representing a permutation hypermatrix $P$, where the positions of the symbol $k$ in $L$ correspond to the non-zero entries in the $k$th plane $P_k$ of $P$. In a permutation hypermatrix, each plane is a permutation matrix, and no pair of planes have non-zero entries in common. The following weighted sum provides a bijection between Latin squares and permutation hypermatrices of order $n$.
\[L = L(P) = \sum_{k=1}^n kP_k\]

\begin{example} The following Latin square corresponds to the weighted sum of planes of a unique permutation hypermatrix.
\[\begin{array}{|c|c|c|}
\hline
1 & 2 & 3 \\
\hline
2 & 3 & 1 \\
\hline
3 & 1 & 2\\
\hline
\end{array}
\longleftrightarrow
1\Matrix{
      1 & 0 & 0 \\
      0 & 0 & 1 \\
      0 & 1 & 0}
+2\Matrix{
      0 & 1 & 0 \\
      1 & 0 & 0 \\
      0 & 0 & 1}
+3\Matrix{
      0 & 0 & 1 \\
      0 & 1 & 0 \\
      1 & 0 & 0}\]\end{example}

This interpretation as permutation hypermatrices leads very naturally to two generalisations of Latin squares, first introduced by Brualdi and Dahl \cite{brualdidahl}.

\begin{itemize}
\item An \emph{alternating sign hypermatrix (ASHM)} of order $n$ is an $n\times n \times n$ hypermatrix with entries in $\{0,1,-1\}$ such that the non-zero entries in each row, column, and vertical line alternate in sign, starting and ending with $+1$.
\item A \emph{planar alternating sign hypermatrix (PASHM)} of order $n$ is an $n\times n \times n$ hypermatrix with entries in $\{0,1,-1\}$ such that each plane $P_k$ of $A$ is an ASM and $P_1 + P_2 + \dots + P_n$ is the all-ones matrix.
\end{itemize}
Note that all ASHMs are PASHMs, but not all PASHMs are ASHMs. We use $P_k\nearrow P_{k+1}$ to denote that the plane $P_k$ occurs directly before $P_{k+1}$.

\begin{example}\label{3x3x3} 
The following is a PASHM of order $3$, but not an ASHM
\[A = \Matrix{
      0 & 1 & 0 \\
      1 & -1 & 1 \\
      0 & 1 & 0}
\hspace{-0.1cm}\nearrow\hspace{-0.1cm}\Matrix{
      0 & 0 & 1 \\
      0 & 1 & 0 \\
      1 & 0 & 0}
\hspace{-0.1cm}\nearrow\hspace{-0.1cm}\Matrix{
      1 & 0 & 0 \\
      0 & 1 & 0 \\
      0 & 0 & 1}.\]
The following is an ASHM of order $3$, and therefore also a PASHM
\[B = \Matrix{
      0 & 0 & 1 \\
      0 & 1 & 0 \\
      1 & 0 & 0}
\hspace{-0.1cm}\nearrow\hspace{-0.1cm}\Matrix{
      0 & 1 & 0 \\
      1 & -1 & 1 \\
      0 & 1 & 0}
\hspace{-0.1cm}\nearrow\hspace{-0.1cm}\Matrix{
      1 & 0 & 0 \\
      0 & 1 & 0 \\
      0 & 0 & 1}.\]
\end{example}

For compactness, and for easier comparison to Latin squares, we use an alternative notation introduced in \cite{ASHM polytope}, representing a PASHM $A$ of order $n$ by an $n \times n$ grid in which cell $(i,j)$ of the grid contains the formal sum of the non-zero entries of the vertical line $A_{ij*}$ of the PASHM times their index in the vertical line (i.e.\ representing the summands without adding them together). Using this notation, we represent the PASHMs from Example~\ref{3x3x3} in the following way

\begin{equation}\label{eq:AB}
A = \begin{array}{|c|c|c|}
\hline
    3 & 1 & 2 \\
\hline
    1 & \SmallArray{-1+2\\+3} & 1 \\
\hline
    2 & 1 & 3 \\
\hline
\end{array}
\hspace{1cm}\text{and}\hspace{1cm}
B = \begin{array}{|c|c|c|}
\hline
    3 & 2 & 1 \\
\hline
    2 & \SmallArray{1-2\\+3} & 2 \\
\hline
    1 & 2 & 3 \\
\hline
\end{array}\,.
\end{equation}

It can be easily checked if an array of this form is (a) a PASHM or (b) an ASHM as follows.
\begin{itemize}
\item [(a)] For each $k\in[n]$, the positions of the signs corresponding to $k$ in a PASHM of order $n$ form an ASM. Further, in each cell of a PASHM, there is one more positive entry than negative.
\item[(b)] In addition to the conditions in (a), in an ASHM, the signs of the terms in each cell alternate, beginning and ending with $+$, when listed in increasing order.
\end{itemize}

Note that while any permutation of the planes of an ASHM results in a PASHM, it is not true that every PASHM is some permutation of the planes of an ASHM.

\begin{example} The following PASHMs have no permutation to an ASHM.
\[\begin{array}{|c|c|c|c|}
\hline
1	&2				&3				&4	\\
\hline
3	&1				&\SmallArray{2-3\\+4}	&3	\\
\hline
2	&\SmallArray{-2+3\\+4}	&1				&2	\\
\hline
4	&2				&3				&1	\\
\hline
\end{array}\hspace{2cm}
\begin{array}{|c|c|c|c|}
\hline
1	&2				&3				&4	\\
\hline
2	&\SmallArray{-2+3\\+4}	&\SmallArray{1+2\\-3}	&3	\\
\hline
4	&\SmallArray{1+2\\-4}	&\SmallArray{-2+3\\+4}	&2	\\
\hline
3	&4				&2				&1	\\
\hline
\end{array}\hspace{2cm}
\begin{array}{|c|c|c|c|}
\hline
1	&2				&3				&4	\\
\hline
2	&\SmallArray{-2+3\\+4}	&\SmallArray{1+2\\-3}	&3	\\
\hline
4	&\SmallArray{1+2\\-4}	&\SmallArray{-1+3\\+4}	&1	\\
\hline
3	&4				&1				&2	\\
\hline
\end{array}\]
\end{example}

\medskip

The rest of this paper is devoted to the definition and the study of a new Bruhat order on Latin squares, ASHMs, and related objects. In the next subsection, we give an outline of the paper together with a description of its main results.
\subsection{Main results and outline of the paper} In
Section~\ref{sec:BLS}, we use the interpretation of Latin squares as
permutation hypermatrices to define a Bruhat order $\preceq_B$ on the
set $\L_n$ of Latin squares of order $n$ and on ASHMs of order~$n$. Our definition uses a suitable notion of $T$-blocks, which allows
us to compare both Latin squares and ASHMs. We also compare our Bruhat
order with the one defined in \cite{Latin bruhat}, and analyse the
effects of classical Latin square moves (so-called cycle switches) in
relation to our order.

In order to extend the (Latin square and) ASHM Bruhat poset to a
lattice, in Section~\ref{sec:CSH} we introduce a 3-dimensional
analogue of corner-sum matrices, namely \emph{corner-sum hypermatrices}
and extend our Bruhat order to these.  This perspective allows us to
prove another characterisation of the Bruhat relation for Latin
squares (Theorem~\ref{decreasing theorem}) and in particular of the
covering relation (Theorem~\ref{thm:coverLS}). The rest of the section
is devoted to the study of the set~$\C_n$ of corner-sum hypermatrices
of order $n$ under the Bruhat order, which we also refer to as the corner-sum Latin lattice in light of the following main result; cf.~Theorems~\ref{thm:CSlattice} and Corollary~\ref{cor:FDL}.
\begin{thm} Corner-sum hypermatrices of order $n$ form a 
distributive lattice under the Bruhat order.  \end{thm}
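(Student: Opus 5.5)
The plan is to realise $\C_n$ concretely as a set of integer hypermatrices closed under entrywise maximum and minimum, with the Bruhat order being reverse entrywise domination. Distributivity is then inherited from $(\mathbb{Z},\le)$. The defining conditions are not yet stated, so I take the natural three-dimensional analogue of corner-sum matrices. A corner-sum hypermatrix $C$ of order $n$ is an $(n+1)\times(n+1)\times(n+1)$ integer array indexed by $[0,n]^3$ with the following properties:
\begin{enumerate*}[label=(\roman*)]
\item boundary values $C_{ijk}=0$ whenever one index is $0$, and $C_{ijn}=\Sigma$-type values such as $C_{ijn}=\min$-free quantities determined by $i,j$ (for a Latin square, $C_{ijn}=ij$, and more generally the faces $C_{nj k}=jk$ and $C_{ink}=ik$);
\item along every line (fixing two indices) consecutive entries differ by $0$ or $1$.
\end{enumerate*}
I expect the paper's definition to be of this form, and that the characterisation proved earlier in the section reads $A\preceq_B B\iff \Sigma(A)\ge\Sigma(B)$ entrywise. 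I will use that characterisation as the definition of the order on $\C_n$.

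\textbf{Step 1: $\C_n$ is a lattice.} Given $C,D\in\C_n$, let $M=\max(C,D)$ and $m=\min(C,D)$ entrywise. The boundary values agree for $C$ and $D$, so they are preserved. For the step condition, fix a line and consecutive positions $p,p'$ on it, so that $C_{p'}-C_p\in\{0,1\}$ and $D_{p'}-D_p\in\{0,1\}$. Then
\[\max(C_{p'},D_{p'})\ge\max(C_p,D_p)\]
by monotonicity. Also
\[\max(C_{p'},D_{p'})\le\max(C_p+1,D_p+1)=\max(C_p,D_p)+1.\]
The same argument works for $\min$. Hence $M,m\in\C_n$. Since the order is entrywise (reversed), $M$ is the infimum and $m$ the supremum in $\preceq_B$, exactly as in Example~\ref{sup inf example}. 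If the actual definition of $\C_n$ contains further local conditions (for example, a condition on the $2\times2\times2$ inclusion–exclusion values forcing an ASHM-like sign pattern), I would check those the same way. The aim is to express each condition as a conjunction of inequalities of the form $x_p-x_q\le c$ between two coordinates. Such "difference constraints" are always preserved by entrywise max and min, because
\[\max(a,b)-\max(a',b')\le\max(a-a',\,b-b').\]

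\textbf{Step 2: distributivity.} Meet and join are computed coordinatewise as $\min$ and $\max$ in $\mathbb{Z}$. Hence $\C_n$ is a sublattice of the product lattice $\mathbb{Z}^{(n+1)^3}$, which is distributive because $\mathbb{Z}$ is a chain. Sublattices of distributive lattices are distributive, so the identity
\[C\wedge(D\vee E)=(C\wedge D)\vee(C\wedge E)\]
holds coordinatewise and therefore in $\C_n$. Finiteness follows from the boundedness $0\le C_{ijk}\le n^2$, so $\C_n$ is a finite distributive lattice. This also ties in with Corollary~\ref{cor:FDL}.

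\textbf{Main obstacle.} The main difficulty lies in Step 1: verifying that every defining condition of a corner-sum hypermatrix is a two-variable difference constraint. If the paper defines $\C_n$ as $\{\Sigma(A): A \text{ an ASHM}\}$ rather than intrinsically, I first need an intrinsic description. To get it, I would show that $\Sigma(A)$ has the stated boundary values and unit-step monotonicity along all three line directions. This uses the fact that the partial sums of each line of an ASHM lie in $\{0,1\}$. Conversely, any array with these properties inverts, via the eight-term inclusion–exclusion, to a hypermatrix whose line partial sums lie in $\{0,1\}$, that is, to an ASHM. The converse is the delicate part. If genuinely three-variable constraints appear, the fallback is to exhibit $\C_n$ as the lattice of order ideals (or down-sets) of a poset, for instance via the bijection with monotone hypertriangles. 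Birkhoff's theorem would then give distributivity.
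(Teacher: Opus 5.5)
Your argument is essentially the paper's: Theorem~\ref{thm:CSlattice} shows that the entrywise $\max$ and $\min$ of two corner-sum hypermatrices are again corner-sum hypermatrices, so they are the meet and join for the reversed entrywise order. Corollary~\ref{cor:FDL} then inherits distributivity coordinatewise from $\mathbb{Z}$, exactly as in your Steps~1 and~2.

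The one correction is that your guessed unit-step condition is not the definition. It is even incompatible with $C_{i,j,0}=0$ and $C_{i,j,n}=ij$ once $ij>n$. Definition~\ref{corner-sum def} instead requires $\max(0,i+j-n)\le C_{i,j,k}-C_{i,j,k-1}\le\min(i,j)$, together with its analogues in the other two directions. Since these bounds depend only on the position, your difference-constraint inequality $\max(a,b)-\max(a',b')\le\max(a-a',b-b')$ covers them verbatim. The speculative ASHM branch of your ``main obstacle'' paragraph is therefore not needed, and it could not work anyway, since by Figure~\ref{ASHM3} the ASHMs do not form a lattice.
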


Section~\ref{sec:invrank} is devoted to a further study of the
corner-sum Latin lattice: we characterise maximal and minimal elements, and its rank function in analogy with the 2-dimensional case (see Theorems~\ref{thm:rankL} and \ref{thm:rankCn}).

In Section~\ref{sec:DM} we prove that the corner-sum Latin lattice is not the Dedekind-MacNeille completion of the Latin square (and ASHM) Bruhat poset, breaking the analogy with the 2-dimensional case. We show that the same is true when our operation for the construction of the Latin lattice is applied to the Bruhat order from \cite{Latin bruhat}.

Section~\ref{sec:MH} is devoted to the definition of another related family of objects: monotone hypertriangles. We give a definition and some properties in analogy with the 2-dimensional case, and we show \begin{enumerate*}\item[(1)] that monotone hypertriangles are in bijection with elements of the lattice $\mathcal C_n$ and \item[(2)] that the Bruhat order on the lattice is encoded by entrywise comparison on these objects as well.\end{enumerate*}

We conclude in Section~\ref{sec:conclusions} with some remarks on the enumeration of the objects studied throughout the paper and some open problems.

%%%%%%%%%%%%%%%%%%%%%%%%%%%%%%%%%%%%%%%%%%

\section{A Bruhat order for Latin squares and ASHMs}\label{sec:BLS}
Recall the definition of Bruhat order on permutations from Section~\ref{sec:bruhatperm}. Note that the symmetry of $I_2$ and $J_2$ implies that interchanging these matrices can be equivalently seen as swapping their rows or swapping their columns. Partly inspired by this observation, we will define a new Bruhat order for Latin squares (see Definition~\ref{BLS}). Before giving the full definition of the new Bruhat order, we describe some features it should have to be a suitable 3-dimensional generalisation of the one on permutations. In the new order, for instance, replacing instances of $J_2 \nearrow I_2$ with $I_2 \nearrow J_2$ as follows
\[\begin{pmatrix}&+\\+&\end{pmatrix}\nearrow \begin{pmatrix}+&\\&+\end{pmatrix}\longrightarrow \begin{pmatrix}+&\\&+\end{pmatrix}\nearrow \begin{pmatrix}&+\\+&\end{pmatrix}\]
essentially plays the same role as replacing copies of $J_2$ with copies of $I_2$ in the $2$-dimensional case. Note that the former can be interpreted as  swapping rows, columns, or planes.

For $L_1,L_2\in\L_n$, this means that $L_1$ precedes $L_2$ in our new Bruhat order if, for instance, $L_1$ can be obtained from $L_2$ by making successive subarray replacements of the form
\[\begin{array}{|c|c|}\hline b	&	a\\ \hline a	&	b\\ \hline\end{array} \longrightarrow \begin{array}{|c|c|}\hline a	&	b\\ \hline b	&	a\\ \hline\end{array}\;,\]
where $a < b$ and the  subarray is not necessarily contiguous.  In the language of Latin squares, such a replacement (without the requirement that $a < b$) is known as an \emph{intercalate} switch \cite{wanless04}. With the requirement $a<b$, we call this a \emph{decreasing} intercalate switch.

\begin{example} 
Let
\[A = \begin{array}{|c|c|c|c|}
\hline
1	&	2	&	3	&	4\\
\hline
2	&	1	&	4	&	3\\
\hline
3	&	4	&	1	&	2\\
\hline
4	&	3	&	2	&	1\\
\hline
\end{array}\,,\hspace{0.5cm}
B = \begin{array}{|c|c|c|c|}
\hline
2	&	1	&	3	&	4\\
\hline
1	&	2	&	4	&	3\\
\hline
3	&	4	&	1	&	2\\
\hline
4	&	3	&	2	&	1\\
\hline
\end{array}\,,\hspace{0.5cm}
C = \begin{array}{|c|c|c|c|}
\hline
1	&	2	&	3	&	4\\
\hline
2	&	4	&	1	&	3\\
\hline
3	&	1	&	4	&	2\\
\hline
4	&	3	&	2	&	1\\
\hline
\end{array}\,,\hspace{0.5cm}
D = \begin{array}{|c|c|c|c|}
\hline
1	&	4	&	3	&	2\\
\hline
2	&	1	&	4	&	3\\
\hline
3	&	2	&	1	&	4\\
\hline
4	&	3	&	2	&	1\\
\hline
\end{array}\,.\]
Then $A$ precedes $B$, $C$, and $D$ in the new Bruhat order. All other pairs are incomparable.
\end{example}

We now give the full definition of our new Bruhat order for Latin squares.
\begin{definition}\label{BLS}
Let $L_1,L_2\in \L_n$. We set $L_1\preceq_B L_2$ if and only if $L_1$ can be obtained from $L_2$ by the repeated addition of \emph{T-blocks} of the following form
\[T = \begin{array}{|c|c|}
\hline
a-b & b-a\\
\hline
b-a & a-b\\
\hline
\end{array}\;,\]
where $a < b$ and the grid is not necessarily contiguous. The Bruhat order on Latin squares of order $n$ is the poset $(\L_n,\preceq_B).$
\end{definition}

This definition includes cases where copies of $J_2 \nearrow I_2$ are replaced with $I_2 \nearrow J_2$, as described before, while also allowing Latin squares not containing $J_2 \nearrow I_2$ or $I_2 \nearrow J_2$ to be compared.

\begin{example}The following Latin squares satisfy $L_1 \preceq_B L_2$.

\[L_2 = \begin{array}{|c|c|c|}
\hline
3	&	1	&	2\\
\hline
1	&	2	&	3\\
\hline
2	&	3	&	1\\
\hline
\end{array}\,, \hspace{1cm} L_2 + \begin{array}{|c|c|c|}
\hline
1-3	&	3-1	&	0\\
\hline
3-1	&	1-3	&	0\\
\hline
0	&	0	&	0\\
\hline
\end{array}+\begin{array}{|c|c|c|}
\hline
0	&	0	&	0\\
\hline
2-3	&	3-2	&	0\\
\hline
3-2	&	2-3	&	0\\
\hline
\end{array}=\begin{array}{|c|c|c|}
\hline
1	&	3	&	2\\
\hline
2	&	1	&	3\\
\hline
3	&	2	&	1\\
\hline
\end{array} = L_1\]
\end{example}

In analogy with the $2$-dimensional case,  permutation hypermatrices $P$ and $Q$ satisfy $P \preceq_B Q$ if and only if $P$ can be obtained by the addition of 3-dimensional positive T-blocks \cite{ASHM-decomp} of the form %$T = I_2 \nearrow J_2$ 
\[T=\begin{pmatrix}
    +&-\\-&+
\end{pmatrix}\nearrow\begin{pmatrix}
    -&+\\+&-
\end{pmatrix}\]
to $2\times2\times2$ sub-hypermatrices of $Q$. Note a negative T-block has the form $-T$.

In a Latin square, swapping a subset of two rows (resp.\ columns or symbols) occupying an equal number of columns and symbols (resp.\ rows and symbols, or rows and columns) results in a Latin square. A minimal such swap is known as a \emph{cycle switch} \cite{wanless04}. Intercalates are cycle switches of order 2, and the addition of a contiguous T-block corresponds to an intercalate switch of consecutive integers.

\begin{lemma}\label{contiguous T-blocks}
Let $T$ be a positive T-block. Then $T$ is the sum of contiguous positive T-blocks.
\end{lemma}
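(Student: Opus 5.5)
The plan is to write a positive T-block as a tensor product of three difference vectors and then telescope each factor. View $T$ as a $(0,\pm1)$-hypermatrix supported on rows $r_1,r_2$, columns $c_1,c_2$ and planes $a<b$. Reading off Definition~\ref{BLS}, $T$ has $+1$ in positions $(r_1,c_1,a)$, $(r_2,c_2,a)$, $(r_1,c_2,b)$, $(r_2,c_1,b)$ and $-1$ in the four complementary positions. Writing $e_i$ for standard basis vectors and $\otimes$ for the outer product, this says exactly that
\[T=(e_{r_1}-e_{r_2})\otimes(e_{c_1}-e_{c_2})\otimes(e_a-e_b).\]
I would first normalise so that $r_1<r_2$ and $c_1<c_2$. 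Swapping the two rows and simultaneously swapping the two columns leaves the product unchanged, since both first factors change sign. So after relabelling we may assume $r_1<r_2$ and either $c_1<c_2$, or else $c_1>c_2$.

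I expect the main obstacle to be checking that the second situation ($r_1<r_2$ but $c_1>c_2$) cannot occur for a \emph{positive} block. The idea is that in that configuration the block, written with increasing row, column and symbol indices, is the block with symbols $a$ and $b$ interchanged on its diagonal. That is the negative T-block $-T$ of the paper's convention, namely the pattern $J_2\nearrow I_2$ rather than $I_2\nearrow J_2$. I would verify this directly against the displayed 3-dimensional form $\begin{pmatrix}+&-\\-&+\end{pmatrix}\nearrow\begin{pmatrix}-&+\\+&-\end{pmatrix}$. That check fixes the normal form $r_1<r_2$, $c_1<c_2$, $a<b$, with $+1$ at $(r_1,c_1,a)$.

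With the normal form in hand, telescope each factor:
\[e_{r_1}-e_{r_2}=\sum_{i=r_1}^{r_2-1}(e_i-e_{i+1}),\]
and similarly for the column factor over $c_1\le j<c_2$ and the symbol factor over $a\le k<b$. Expanding the outer product trilinearly gives
\[T=\sum_{i=r_1}^{r_2-1}\sum_{j=c_1}^{c_2-1}\sum_{k=a}^{b-1}(e_i-e_{i+1})\otimes(e_j-e_{j+1})\otimes(e_k-e_{k+1}).\]
Each summand is a T-block on consecutive rows $i,i+1$, consecutive columns $j,j+1$ and consecutive symbols $k<k+1$, so it is contiguous. It also has $+1$ at $(i,j,k)$, so it has the same orientation as $T$ and is therefore positive. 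In Latin-square notation, each summand is the block with $k-(k+1)$ on the diagonal cells and $(k+1)-k$ on the off-diagonal cells, which is of the form required in Definition~\ref{BLS}. This completes the plan.
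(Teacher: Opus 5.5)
Your proof is correct. It rests on the same telescoping identity as the paper's proof but is organised differently. The paper argues by induction on $n_i+n_j+n_k$. Assuming by symmetry that the block spans more than one plane, it applies the induction hypothesis to the block $T'$ whose $k$-span is one shorter. It then adds every contiguous positive T-block lying between the last two planes inside the bounding rectangle, and checks entry by entry that interior contributions cancel (each receives two or four terms of opposite sign) while the corners produce exactly $T$.

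Your factorisation $T=(e_{r_1}-e_{r_2})\otimes(e_{c_1}-e_{c_2})\otimes(e_a-e_b)$ replaces both the induction and the cancellation check with a single application of trilinearity to three telescoping sums. It also gives an explicit closed form: $T$ is the sum of all contiguous positive T-blocks in its bounding box, each used exactly once, $(r_2-r_1)(c_2-c_1)(b-a)$ of them. Consequently $\Xi(T)$ is the indicator of the box $\{r_1,\dots,r_2-1\}\times\{c_1,\dots,c_2-1\}\times\{a,\dots,b-1\}$, and the argument carries over verbatim to any dimension. The paper's route stays within its entrywise language and avoids tensor notation, at the cost of a less transparent cancellation step.

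The orientation issue you single out as the main obstacle is really just the convention. The $2\times2$ grid in Definition~\ref{BLS} is a subarray, so its rows and columns already appear in increasing order. A positive block is therefore in your normal form $r_1<r_2$, $c_1<c_2$, $a<b$ from the start. Your observation that the other orientation gives $-T$ simply confirms that this reading agrees with the displayed 3-dimensional form.
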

\begin{proof}
Let $n_i,n_j, n_k\ge1$ be the differences between the indices of the nonzero entries in $T$ in the $i$, $j$, and $k$ directions respectively. We prove the result by induction on $n_i + n_j + n_k$. Clearly, the claim is true for $n_i = n_j = n_k = 1$. Now let $n\ge4$ and assume the claim is true for all $T$-blocks with $n_i + n_j + n_k < n$. Suppose $n_i + n_j + n_k = n$. Without loss of generality, suppose $n_k > 1$, and consider the T-block $T'$ of dimension $n_i \times n_j \times (n_k-1)$. By induction, $T'$ is the sum of positive contiguous T-blocks. Let $S$ be the sum of $T'$ and each positive contiguous T-block occupying planes $n_k-1$ and $n_k$ of $T$. Each entry of $S$ which is not in one of the corners of planes $n_k-1$ or $n_k$ is  equal to 0, since it results from the sum of either 2 or 4 entries of positive contiguous T-blocks (half of which are $+1$ and half $-1$). The entries in the corners of plane $n_k-1$ of $T'$ are either $+1$ or $-1$, and exactly one entry of a positive contiguous T-block is added to give 0 in the corresponding position of $S$. Similarly, exactly one entry of a positive contiguous T-block is added to each corner of plane $n_k$. Therefore $S = T$, and $T$ is the sum of contiguous positive T-blocks.
\end{proof}
\begin{corollary}\label{contiguous T corollary}
    Let $L_1,L_2\in\L_n$. Then $L_1 \preceq_BL_2$ if and only if $L_1$ can be obtained from $L_2$ by the repeated addition of \emph{contiguous} positive T-blocks. Moreover, if $L_1$ can be obtained from $L_2$ by a single contiguous decreasing intercalate switch of consecutive integers% (the addition of a single contiguous positive T-block)
    , then $L_1 \cover_B L_2$.
\end{corollary}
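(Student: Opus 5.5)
The corollary has two parts, and I would treat them separately. Both rest on Lemma~\ref{contiguous T-blocks} and on viewing Latin squares as permutation hypermatrices. In that view, adding a T-block as in Definition~\ref{BLS}, with $a<b$, is the same as adding a positive 3-dimensional T-block in the hypermatrix picture.

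\textbf{The equivalence.} The implication ``$\Leftarrow$'' is immediate, since a contiguous positive T-block is in particular a positive T-block. For ``$\Rightarrow$'', suppose $L_1 = L_2 + T^{(1)} + \dots + T^{(m)}$ with each $T^{(t)}$ a positive T-block. By Lemma~\ref{contiguous T-blocks}, each $T^{(t)}$ is a sum of contiguous positive T-blocks. Addition of hypermatrices is commutative and associative, so $L_1$ is obtained from $L_2$ by repeatedly adding contiguous positive T-blocks. Here I am reading Definition~\ref{BLS} as imposing no condition on the intermediate arrays, only on $L_1$ and $L_2$. If Latin intermediates were required, this step would need more care; I would check the definition's intent on this point.

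\textbf{The covering statement.} Suppose $L_1 = L_2 + T_0$ with $T_0$ a single contiguous positive T-block, and suppose some $M\in\L_n$ satisfies $L_1 \prec_B M \prec_B L_2$. I would rule this out with an integer-valued linear functional that changes by the same nonzero amount under every contiguous positive T-block. The natural candidate is
\[
f(A) = \sum_{i,j,k} i\,j\,k\,A_{ijk}.
\]

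\begin{itemize}
\item \emph{Step 1.} A contiguous positive T-block occupies a $2\times2\times2$ cube with corner $(i,j,k)$. It has $+1$ on the four corners with an even number of coordinate increments and $-1$ on the other four, up to a global sign fixed by the convention $a<b$. So its pairing with $ijk$ is the mixed third finite difference of the function $ijk$. That difference equals the constant $\varepsilon=\pm1$, independent of the position of the cube.
\item \emph{Step 2.} By the first part, there are integers $p,q\ge 1$ such that $M$ is obtained from $L_2$ by adding $p$ contiguous positive T-blocks and $L_1$ is obtained from $M$ by adding $q$ of them. Here $p,q\ge1$ because $M\neq L_2$ and $M\neq L_1$.
\item \emph{Step 3.} Applying $f$ gives $f(L_1)-f(L_2)=\varepsilon(p+q)$. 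On the other hand, $f(L_1)-f(L_2)=f(T_0)=\varepsilon$. Hence $p+q=1$, which contradicts $p,q\ge1$.
\end{itemize}

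It follows that $L_1\cover_B L_2$.

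\textbf{Main obstacle.} The main step is Step 1: checking carefully that the sign pattern of a positive T-block makes $f(T)$ the same constant $\varepsilon$ for every contiguous positive block. I would verify this directly on the cube with corner $(1,1,1)$ and then use translation invariance of the third difference of the trilinear function $ijk$. The same computation also shows that $f$ is strictly monotone along $\preceq_B$, which gives antisymmetry as a by-product.
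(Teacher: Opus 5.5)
Your proof is correct, and the equivalence part is exactly what the paper intends. The paper states the corollary without a proof of its own, as an immediate consequence of Lemma~\ref{contiguous T-blocks}.

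Your reading of ``repeated addition'', with no condition on the intermediate arrays, is also the paper's reading. The example right after Definition~\ref{BLS} passes through an array whose $(2,2)$ cell is $1+2-3$. The proof of Theorem~\ref{cornersum_implies_bruhat_entrywise} uses the corollary only in the form $A=B+T_1+\dots+T_m$.

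For the covering clause the paper gives no argument at this point. The nearest thing is a later observation, in the proof of Theorem~\ref{cornersum_implies_bruhat_entrywise} and in Section~\ref{sec:invrank}: $\Xi$ of a contiguous positive T-block has a single nonzero entry, equal to $1$, so each such block lowers $\rho$ by exactly one.

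Your functional $f$ is that same grading in disguise. Write $\rho(A)=\sum_{i,j,k}\Xi(A)_{ijk}=\sum_{a,b,c}(n+1-a)(n+1-b)(n+1-c)A_{abc}$ and expand. Every term other than $-abc$ depends on at most two coordinates, so it pairs to zero against any hypermatrix with vanishing line sums. Hence $\rho(A)=-f(A)$ for all such $A$, which agrees with your $\varepsilon=-1$.

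The two routes trade off as follows.
\begin{itemize}
\item Your version is self-contained: it needs neither corner-sum hypermatrices nor Theorem~\ref{cornersum_implies_bruhat_entrywise}. It also gives antisymmetry of $\preceq_B$ directly, which the paper obtains only implicitly through entrywise comparison of $\Xi$.
\item The paper's route records which entry of $\Xi$ changes. That finer information is what drives the later lattice results.
\end{itemize}
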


Given $n \times n$ Latin squares $A$ and $B$, we let $A \preceq' B$ if $B$ precedes $A$ in the Bruhat order of \cite{Latin bruhat}. Note that $A \preceq_BB$ if and only if $A\preceq'B$ for $n \le 3$ (meaning our definition of the Bruhat order and that of \cite{Latin bruhat} are the duals of one another). For $n \ge 4$, the definitions differ more substantially.

\begin{example}\label{Comparing Bruhat definitions} Pages 4--5 of \cite{Latin bruhat} define the following Latin squares $A,C,D$ and note that $A \preceq'D\preceq' C$. Our definition similarly has $A\preceq_BC$ and $D\preceq_BC$, but $A$ and $D$ are incomparable.
\[A = \begin{array}{|c|c|c|c|}
\hline
1	&	2	&	3	&	4\\
\hline
4	&	3	&	1	&	2\\
\hline
3	&	4	&	2	&	1\\
\hline
2	&	1	&	4	&	3\\
\hline
\end{array},\hspace{0.5cm}
C = \begin{array}{|c|c|c|c|}
\hline
2	&	1	&	3	&	4\\
\hline
4	&	3	&	1	&	2\\
\hline
3	&	4	&	2	&	1\\
\hline
1	&	2	&	4	&	3\\
\hline
\end{array},\hspace{0.5cm}
D = \begin{array}{|c|c|c|c|}
\hline
2	&	1	&	3	&	4\\
\hline
3	&	4	&	1	&	2\\
\hline
4	&	3	&	2	&	1\\
\hline
1	&	2	&	4	&	3\\
\hline
\end{array}\]
\end{example}

The Bruhat order $\preceq_B$ on $3 \times 3$ Latin squares is represented by the Hasse diagram in Figure~\ref{fig:L3}.

\begin{figure}[h]
\begin{center}\quad\qquad\qquad\begin{NiceTabular}{cccc}

$\begin{array}{|c|c|c|}
\hline
3	&	1	&	2\\
\hline
2	&	3	&	1\\
\hline
1	&	2	&	3\\
\hline
\end{array}$
&
$\begin{array}{|c|c|c|}
\hline
3	&	2	&	1\\
\hline
1	&	3	&	2\\
\hline
2	&	1	&	3\\
\hline
\end{array}$
&
$\begin{array}{|c|c|c|}
\hline
3	&	2	&	1\\
\hline
2	&	1	&	3\\
\hline
1	&	3	&	2\\
\hline
\end{array}$
&
$\begin{array}{|c|c|c|}
\hline
2	&	3	&	1\\
\hline
3	&	1	&	2\\
\hline
1	&	2	&	3\\
\hline
\end{array}$

\\

&\phantom{$\begin{pmatrix}0\\0\\0\\0\end{pmatrix}$}&&

\\

$\begin{array}{|c|c|c|}
\hline
3	&	1	&	2\\
\hline
1	&	2	&	3\\
\hline
2	&	3	&	1\\
\hline
\end{array}$
&
$\begin{array}{|c|c|c|}
\hline
1	&	3	&	2\\
\hline
3	&	2	&	1\\
\hline
2	&	1	&	3\\
\hline
\end{array}$
&
$\begin{array}{|c|c|c|}
\hline
2	&	1	&	3\\
\hline
3	&	2	&	1\\
\hline
1	&	3	&	2\\
\hline
\end{array}$
&
$\begin{array}{|c|c|c|}
\hline
2	&	3	&	1\\
\hline
1	&	2	&	3\\
\hline
3	&	1	&	2\\
\hline
\end{array}$

\\

&&\phantom{$\begin{pmatrix}0\\0\\0\\0\end{pmatrix}$}&

\\

$\begin{array}{|c|c|c|}
\hline
1	&	2	&	3\\
\hline
3	&	1	&	2\\
\hline
2	&	3	&	1\\
\hline
\end{array}$
&
$\begin{array}{|c|c|c|}
\hline
1	&	3	&	2\\
\hline
2	&	1	&	3\\
\hline
3	&	2	&	1\\
\hline
\end{array}$
&
$\begin{array}{|c|c|c|}
\hline
2	&	1	&	3\\
\hline
1	&	3	&	2\\
\hline
3	&	2	&	1\\
\hline
\end{array}$
&
$\begin{array}{|c|c|c|}
\hline
1	&	2	&	3\\
\hline
2	&	3	&	1\\
\hline
3	&	1	&	2\\
\hline
\end{array}$
\\

\CodeAfter 
\tikz \draw  (1-1) -- (3-1); 
\tikz \draw  (1-1) -- (3-2); 
\tikz \draw  (1-2) -- (3-1); 
\tikz \draw  (1-2) -- (3-2); 
%%%
\tikz \draw  (1-1) -- (3-3); 
\tikz \draw  (1-2) -- (3-4);
\tikz \draw  (1-3) -- (3-1); 
\tikz \draw  (1-4) -- (3-2);  
%%%
\tikz \draw  (1-3) -- (3-3); 
\tikz \draw  (1-3) -- (3-4); 
\tikz \draw  (1-4) -- (3-3); 
\tikz \draw  (1-4) -- (3-4); 
%%%
\tikz \draw  (5-1) -- (3-1); 
\tikz \draw  (5-1) -- (3-2); 
\tikz \draw  (5-2) -- (3-1); 
\tikz \draw  (5-2) -- (3-2); 
%%%
\tikz \draw  (5-1) -- (3-3); 
\tikz \draw  (5-2) -- (3-4);
\tikz \draw  (5-3) -- (3-1); 
\tikz \draw  (5-4) -- (3-2);  
%%%
\tikz \draw  (5-3) -- (3-3); 
\tikz \draw  (5-3) -- (3-4); 
\tikz \draw  (5-4) -- (3-3); 
\tikz \draw  (5-4) -- (3-4); 
\end{NiceTabular}\end{center}
\caption{The Hasse diagram of $\L_3$.}\label{fig:L3}
\end{figure}

In \cite{wanless04}, it is shown that for arbitrarily large $n \in \mathbb{N}$, there exist pairs of Latin squares of order $n$ which cannot be obtained from one another by a sequence of cycle switches. Cycle switches, however, are useful to describe the Bruhat relation for Latin squares, and to relate it to the 2-dimensional Bruhat order.

\begin{theorem}\label{row/col/sym swap} Let $L_1,L_2\in\L_n$ be such that $L_1$ can be obtained from $L_2$ by repeated cycle switches. If the (partial) permutation with lower index in each switch precedes the (partial) permutation it is switched with in the 2-dimensional Bruhat order, then $L_1\preceq_BL_2$.
\end{theorem}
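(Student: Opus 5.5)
The plan is to reduce to a single cycle switch, and then to factor that switch into a chain of decreasing intercalate switches (equivalently, additions of positive T-blocks) by using a Bruhat chain of two-dimensional permutations. Since $\preceq_B$ is transitive, and each step of the hypothesis is a single cycle switch satisfying the stated condition, it suffices to treat one cycle switch. The three directions (rows, columns, symbols) play symmetric roles in the hypermatrix picture. Moreover, the three-dimensional T-block $\begin{pmatrix}+&-\\-&+\end{pmatrix}\nearrow\begin{pmatrix}-&+\\+&-\end{pmatrix}$ is invariant, up to sign convention, under permuting the three coordinates. So I will assume the switch exchanges two rows $i<i'$ on a set $S$ of columns; the column and symbol cases follow by the same argument after relabelling coordinates.

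\textbf{Step 1: encode the two row segments as permutations.} In the permutation hypermatrix of $L_2$, consider the slices $i$ and $i'$ as $n\times n$ $(0,1)$-matrices in (column, symbol) coordinates, restricted to the columns in $S$. Call them $P$ (row $i$) and $Q$ (row $i'$). Since the switch is legal, both have the same column support $S$ and the same symbol support $K$ with $|S|=|K|$. After compressing to $S\times K$, they are genuine permutation matrices. The switch replaces $P$ by $Q$ in row $i$ and $Q$ by $P$ in row $i'$. The hypothesis says the lower-index permutation $P$ compares to $Q$ in the two-dimensional Bruhat order. I must fix the orientation carefully: choose the direction, $P\preceq_B Q$ or the reverse, that makes the signs below come out positive. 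I expect this bookkeeping to match the statement's convention, but it is the first thing I would check on the $2\times 2$ case, where the switch should reduce to a decreasing intercalate switch.

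\textbf{Step 2: factor the switch.} Using the inversion description from Section~\ref{sec:bruhatperm}, write a chain $Q=Q_0, Q_1,\dots,Q_r=P$ in which each $Q_{m+1}$ is obtained from $Q_m$ by a transposition that removes inversions. Equivalently, $Q_{m+1}=Q_m+T_m$ with $T_m$ a (non-contiguous, after uncompressing) copy of $\begin{pmatrix}+1&-1\\-1&+1\end{pmatrix}$, as in Definition~\ref{pm 2d bruhat def}. Define intermediate squares $M_m$ from $L_2$ by placing $Q_m$'s content in row $i$ and the complementary content in row $i'$. Concretely, the transposition exchanging two entries of row $i$ on two columns $j<j'$ exchanges the same two symbols $a<b$ in row $i'$. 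So passing from $M_m$ to $M_{m+1}$ is an intercalate switch on rows $i,i'$ and columns $j,j'$ with symbols $a,b$. Each $M_m$ is a Latin square, because rows $i,i'$ on columns $S$ always use the symbols of $K$ exactly once each per column. The chain ends with row $i$ carrying $Q$ and row $i'$ carrying $P$, which is exactly $L_1$.

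\textbf{Step 3: check the sign of each step.} The hypermatrix difference $M_{m+1}-M_m$ equals $-T_m$ placed in slice $i$ and $+T_m$ in slice $i'$ (or the negatives, depending on orientation). This is a three-dimensional T-block on rows $\{i,i'\}$, columns $\{j,j'\}$ and symbols $\{a,b\}$. In the notation of Definition~\ref{BLS}, it is the T-block with entries $a-b$ at $(i,j),(i',j')$ and $b-a$ at the other two cells, with $a<b$. The main obstacle is verifying that the Bruhat hypothesis on the lower-index permutation forces every one of these blocks to be positive, that is, each intercalate switch to be decreasing. This amounts to checking that an inversion-removing transposition in row $i$'s permutation puts the smaller symbol at position $(i,j)$, top-left. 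Once this sign check is done for one transposition, summing over $m$ gives $L_1=L_2+\sum_m(\text{positive T-blocks})$. Hence $L_1\preceq_B L_2$ by Definition~\ref{BLS}, with each step in fact a decreasing intercalate switch, compatible with Corollary~\ref{contiguous T corollary}.
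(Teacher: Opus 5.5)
Your plan is the paper's: reduce to a single two-row switch, decompose the change in row $i$ into copies of the $2\times2$ block $T$ of Definition~\ref{pm 2d bruhat def} via the Bruhat relation, and pair each copy with its negative in row $i'$ to get a three-dimensional T-block. Your symmetry reduction to row switches is fine. However, Step~2 rests on a false claim: the intermediate arrays $M_m$ are not Latin squares, and the steps are not intercalate switches.

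Since the rows other than $i,i'$ are untouched, a Latin $M_m$ would have to keep $\{P(j),Q(j)\}$ in rows $i,i'$ of every column $j\in S$. Its row-$i$ content would then be a perfect matching in the graph on $S\sqcup K$ with edges $j$--$P(j)$ and $j$--$Q(j)$. For a cycle switch this graph is a single cycle, whose only perfect matchings are $P$ and $Q$. For $|S|\ge 3$ these two differ by an $|S|$-cycle, not a transposition, so every Bruhat chain between them has interior points, none of which can be realised inside a Latin square.

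Concretely, Latin squares of order $3$ contain no intercalates at all. Yet taking $L_2$ with rows $123,231,312$ and swapping its first two rows to get $L_1$ is a cycle switch to which the theorem applies. Your justification (``rows $i,i'$ always use the symbols of $K$ exactly once each per column'') conflates the row and column conditions. What you actually get in row $i'$ is the hypermatrix slice $P+Q-Q_m$, which has $-1$ entries as soon as $Q_m$ leaves the support of $P+Q$. So the closing remark that every step is a decreasing intercalate switch is false.

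The fix is to drop Latin intermediates, which Definition~\ref{BLS} never requires (compare Lemma~\ref{contiguous T-blocks}). $H(L_1)-H(L_2)$ is $Q-P$ on slice $i$ and $P-Q$ on slice $i'$. Writing $Q-P$ as a sum of copies of $T$ and pairing each with its negative on slice $i'$ exhibits the difference as a sum of three-dimensional T-blocks. This is essentially the paper's argument, which works directly with the difference.

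The sign check you call the main obstacle is immediate. A positive T-block puts $+1,-1,-1,+1$ (columns $j<j'$, symbols $a<b$) on the lower slice $i$, so you need a chain from $P$ \emph{down} to $Q$, i.e.\ $Q\preceq_B P$. In other words, in $L_1$ the lower-index row must carry the Bruhat-smaller permutation. The order-$3$ example above shows this is the only reading under which the statement holds: there the row-$1$ permutation of $L_2$ precedes that of row $2$, yet $L_2\prec_B L_1$ by Figure~\ref{fig:L3}.

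Your chain $Q=Q_0,\dots,Q_r=P$ presupposes $P\preceq_B Q$. With it, $M_0=L_1$ and $M_r=L_2$ (not the reverse, as you state), and it yields $L_2\preceq_B L_1$. Run the chain from $P$ down to $Q$ instead.
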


\begin{proof}
    Without loss of generality, suppose $L_1$ and $L_2$ differ by a switch of a subset of 2 rows, $R_1$ and $R_2$. To obtain the entries of $L_1$ from the entries of $L_2$ in $R_1$, there are integers $a<b$ for which $a-b$ is being added to an entry of $R_1$ and $b-a$ is being added, in the same row, to an entry with higher index. Respectively, $b-a$ and $a-b$ are being added to the corresponding entries of $R_2$. By Definition~\ref{pm 2d bruhat def}, this means the permutation represented by $R_1$ precedes the permutation represented by $R_2$ in the 2-dimensional Bruhat order. If $L_1$ and $L_2$ differ by multiple such switches, each intermediate Latin square precedes the previous and is preceded by the next in the new Bruhat order.
\end{proof}

Theorem \ref{row/col/sym swap} demonstrates that the relationship between our 3-dimensional Bruhat order on $n \times n$ Latin squares and the 2-dimensional Bruhat order on $S_n$ is strongly analogous to the relationship between the 2-dimensional Bruhat order on $S_n$ and the natural order on $[n]$, which could be seen as its 1-dimensional analogue. If there is a sequence of transpositions from $\sigma$ to $\pi$,
each swapping entries in positions $i<j$ with the smaller value
initially in position $i$, then $\sigma \preceq_B \pi$.

As in the 2-dimensional case, %the T-block definition allows us to 
 we may compare ASHMs with Latin squares under the Bruhat order. Unlike the 2-dimensional case, however, ASHMs do not extend the Latin poset to a lattice. In the $3\times3$ case, for example, extending from Latin squares to ASHMs results in a poset containing a unique minimal and unique maximal element which is not a lattice, see Figure~\ref{ASHM3}.

\begin{figure}[htb]
\begin{center}\quad\quad\quad \begin{NiceTabular}{ccccc}
&\phantom{$\begin{pmatrix}0&0&0&0&0\\0&0&0&0&0\\0&0&0&0&0\\0&0&0&0&0\\0&0&0&0&0\\0&0&0&0&0\\0&0&0&0&0\end{pmatrix}$}&$\begin{array}{|c|c|c|}
\hline
3	&			2	&	1\\
\hline
2	&\SmallArray{1-2\\+3}	&	2\\
\hline
1	&			2	&	3\\
\hline
\end{array}$&\phantom{$\begin{pmatrix}0&0&0&0&0\\0&0&0&0&0\\0&0&0&0&0\\0&0&0&0&0\\0&0&0&0&0\\0&0&0&0&0\\0&0&0&0&0\end{pmatrix}$}&

\\

$\begin{array}{|c|c|c|}
\hline
3	&	1	&	2\\
\hline
2	&	3	&	1\\
\hline
1	&	2	&	3\\
\hline
\end{array}$
&
$\begin{array}{|c|c|c|}
\hline
3	&	2	&	1\\
\hline
1	&	3	&	2\\
\hline
2	&	1	&	3\\
\hline
\end{array}$
&&
$\begin{array}{|c|c|c|}
\hline
3	&	2	&	1\\
\hline
2	&	1	&	3\\
\hline
1	&	3	&	2\\
\hline
\end{array}$
&
$\begin{array}{|c|c|c|}
\hline
2	&	3	&	1\\
\hline
3	&	1	&	2\\
\hline
1	&	2	&	3\\
\hline
\end{array}$

\\

&&\phantom{$\begin{pmatrix}0\\0\\0\\0\end{pmatrix}$}&&

\\

$\begin{array}{|c|c|c|}
\hline
3	&	1	&	2\\
\hline
1	&	2	&	3\\
\hline
2	&	3	&	1\\
\hline
\end{array}$
&
$\begin{array}{|c|c|c|}
\hline
1	&	3	&	2\\
\hline
3	&	2	&	1\\
\hline
2	&	1	&	3\\
\hline
\end{array}$
&\phantom{$\begin{pmatrix}0\\0\\0\\0\end{pmatrix}$}&
$\begin{array}{|c|c|c|}
\hline
2	&	1	&	3\\
\hline
3	&	2	&	1\\
\hline
1	&	3	&	2\\
\hline
\end{array}$
&
$\begin{array}{|c|c|c|}
\hline
2	&	3	&	1\\
\hline
1	&	2	&	3\\
\hline
3	&	1	&	2\\
\hline
\end{array}$

\\

&&\phantom{$\begin{pmatrix}0\\0\\0\\0\end{pmatrix}$}&&

\\

$\begin{array}{|c|c|c|}
\hline
1	&	2	&	3\\
\hline
3	&	1	&	2\\
\hline
2	&	3	&	1\\
\hline
\end{array}$
&
$\begin{array}{|c|c|c|}
\hline
1	&	3	&	2\\
\hline
2	&	1	&	3\\
\hline
3	&	2	&	1\\
\hline
\end{array}$
&&
$\begin{array}{|c|c|c|}
\hline
2	&	1	&	3\\
\hline
1	&	3	&	2\\
\hline
3	&	2	&	1\\
\hline
\end{array}$
&
$\begin{array}{|c|c|c|}
\hline
1	&	2	&	3\\
\hline
2	&	3	&	1\\
\hline
3	&	1	&	2\\
\hline
\end{array}$
\\

&\phantom{$\begin{pmatrix}0&0&0&0&0\\0&0&0&0&0\\0&0&0&0&0\\0&0&0&0&0\\0&0&0&0&0\\0&0&0&0&0\\0&0&0&0&0\end{pmatrix}$}&$\begin{array}{|c|c|c|}
\hline
1	&			2	&	3\\
\hline
2	&\SmallArray{1-2\\+3}	&	2\\
\hline
3	&			2	&	1\\
\hline
\end{array}$&\phantom{$\begin{pmatrix}0&0&0&0&0\\0&0&0&0&0\\0&0&0&0&0\\0&0&0&0&0\\0&0&0&0&0\\0&0&0&0&0\\0&0&0&0&0\end{pmatrix}$}&

\CodeAfter 
\tikz \draw  (1-3) -- (2-1); 
\tikz \draw  (1-3) -- (2-2); 
\tikz \draw  (1-3) -- (2-4); 
\tikz \draw  (1-3) -- (2-5); 
%%%
\tikz \draw  (2-1) -- (4-1); 
\tikz \draw  (2-1) -- (4-2); 
\tikz \draw  (2-2) -- (4-1); 
\tikz \draw  (2-2) -- (4-2); 
%%%
\tikz \draw  (2-1) -- (4-4); 
\tikz \draw  (2-2) -- (4-5);
\tikz \draw  (2-4) -- (4-1); 
\tikz \draw  (2-5) -- (4-2);  
%%%
\tikz \draw  (2-4) -- (4-4); 
\tikz \draw  (2-4) -- (4-5); 
\tikz \draw  (2-5) -- (4-4); 
\tikz \draw  (2-5) -- (4-5); 
%%%
\tikz \draw  (6-1) -- (4-1); 
\tikz \draw  (6-1) -- (4-2); 
\tikz \draw  (6-2) -- (4-1); 
\tikz \draw  (6-2) -- (4-2); 
%%%
\tikz \draw  (6-1) -- (4-4); 
\tikz \draw  (6-2) -- (4-5);
\tikz \draw  (6-4) -- (4-1); 
\tikz \draw  (6-5) -- (4-2);  
%%%
\tikz \draw  (6-4) -- (4-4); 
\tikz \draw  (6-4) -- (4-5); 
\tikz \draw  (6-5) -- (4-4); 
\tikz \draw  (6-5) -- (4-5); 
%%%
\tikz \draw  (7-3) -- (6-1); 
\tikz \draw  (7-3) -- (6-2); 
\tikz \draw  (7-3) -- (6-4); 
\tikz \draw  (7-3) -- (6-5); 
\end{NiceTabular}\end{center}
\caption{The Hasse diagram of the Bruhat order on ASHMs of order $3$.}\label{ASHM3}
\end{figure}

\section{Corner-sum hypermatrices}\label{sec:CSH}
In this section, in order to extend the Latin square (and ASHM) poset to a lattice, we define and study corner-sum hypermatrices.
\begin{definition}\label{corner-sum def}
A \emph{corner-sum hypermatrix} of order $n$ is an $(n+1)\times(n+1)\times(n+1)$ hypermatrix~$C$ with indices in $[0,n]$ where, for each fixed $0 \leq i, j \leq n$, we have $C_{i,j,0} = C_{i,0,j} = C_{0,i,j} = 0$,  $C_{i,j,n} = C_{i,n,j} = C_{n,i,j} = ij$, and the following for all $1 \leq k \leq n$.
\[C_{i,j,k}-C_{i,j,k-1},\:C_{i,k,j}-C_{i,k-1,j},\:C_{k,i,j}-C_{k-1,i,j} \in \big\{\max(0,i+j-n), \dots, \min(i,j)\big\}\]
\end{definition}

Note that $j=0$ implies $\max(0,i+j-n) = 0$ and $\min(i,j) = 0$, while $j=n$ implies $\max(0,i+j-n) = i$ and $\min(i,j) = i$. This coincides with the boundary values being fixed.

Given a hypermatrix $A$, we define  its associated corner-sum $\Xi(A)$ as follows, for all $0\leq i,j,k\leq n$.
\[\Xi(A)_{i,j,k} = \sum_{a = 1}^i \sum_{b = 1}^j \sum_{c = 1}^k A_{a,b,c}\]

We use $\Xi(L)$ for the corner-sum of the hypermatrix $H(L)$ corresponding to a Latin square~$L$. Similarly to the 2-dimensional case, the entries of a hypermatrix $A$ can be recovered from the corner-sum $C = \Xi(A)$ by $A_{i,j,k} = \Xi^{-1}(C)_{i,j,k}$, defined as follows for all $1\leq i,j,k\leq n$.
\[\Xi^{-1}(C)_{i,j,k} = C_{i,j,k}-C_{i-1,j,k}-C_{i,j-1,k}-C_{i,j,k-1}+C_{i-1,j-1,k}+C_{i-1,j,k-1}+C_{i,j-1,k-1}-C_{i-1,j-1,k-1}\]
\begin{example}\label{corner-sum hypermatrix example}
The following are a Latin square $L$, its corresponding hypermatrix $H(L)$, the associated matrix $\Sigma(L)$ and hypermatrix $\Xi(L)$.
\[L = \begin{array}{|c|c|c|}
    \hline
    1&2&3\\
    \hline
    2&3&1\\
    \hline
    3&1&2\\
    \hline
\end{array},\quad
H(L) = \begin{pmatrix}
      1 & 0 & 0 \\
      0 & 0 & 1 \\
      0 & 1 & 0\end{pmatrix}
\nearrow\begin{pmatrix}
      0 & 1 & 0 \\
      1 & 0 & 0 \\
      0 & 0 & 1\end{pmatrix}
\nearrow\begin{pmatrix}
      0 & 0 & 1 \\
      0 & 1 & 0 \\
      1 & 0 & 0\end{pmatrix},\quad\Sigma(L) = \begin{pmatrix}
    0&0&0&0\\
    0&1&3&6\\
    0&3&8&12\\
    0&6&12&18
\end{pmatrix},\]
\[\Xi(L) = \begin{pmatrix}
	0 & 0 & 0 & 0 \\
	0 & 0 & 0 & 0 \\
	0 & 0 & 0 & 0 \\
	0 & 0 & 0 & 0\end{pmatrix}
\nearrow
\begin{pmatrix}
	0 & 0 & 0 & 0 \\
	0 & 1 & 1 & 1 \\
	0 & 1 & 1 & 2 \\
	0 & 1 & 2 & 3\end{pmatrix}
\nearrow
\begin{pmatrix}
	0 & 0 & 0 & 0 \\
	0 & 1 & 2 & 2 \\
	0 & 2 & 3 & 4 \\
	0 & 2 & 4 & 6\end{pmatrix}
\nearrow
\begin{pmatrix}
	0 & 0 & 0 & 0 \\
	0 & 1 & 2 & 3 \\
	0 & 2 & 4 & 6 \\
	0 & 3 & 6 & 9\end{pmatrix}\]

We can recover the entries of $A=H(L)$ using the inverse map $\Xi^{-1}$. For example:
\begin{itemize}
\item $A_{2,1,2} = 2 - 0 - 1 - 1 + 0 + 0 + 1 - 0 = 1$
\item $A_{2,2,2} = 3 - 2 - 2 - 1 + 1 + 1 + 1 - 1 = 0$
\end{itemize}
\end{example}

We show in Theorem \ref{cornersum ASHMs} that if $A$ is an ASHM, then $\Xi(A)$ is a corner-sum hypermatrix. However, there exist hypermatrices $A$ which are not ASHMs and for which $\Xi(A)$ is a corner-sum hypermatrix. Note that the condition on the difference between consecutive entries in Definition \ref{corner-sum def} means that the minimum and maximum increases from one plane of a corner-sum hypermatrix of order $n$ to the next (in any direction) are respectively given by the corresponding entries of $\Sigma(J_n)$ and  $\Sigma(I_n)$. This means that the minimum and maximum possible increase between any pair of neighbouring entries in a corner-sum hypermatrix is achieved by $\Xi(L)$ for some Latin square $L$ (alternatively, by $\Xi(H(L))$ for some permutation hypermatrix $H(L)$).

\begin{lemma}\label{cornersum permutations}
Let $A$ be a hypermatrix of order $n$. Then $A$ is a permutation hypermatrix if and only if $C = \Xi(A)$ is a corner-sum hypermatrix satisfying $\Xi^{-1}(C)_{i,j,k}\in\{0,1\}$ for all $i, j, k \in [n]$.
\end{lemma}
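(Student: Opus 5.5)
Both directions rest on one identity. For a hypermatrix $A$ with $C=\Xi(A)$, and for $1\le k\le n$,
\[C_{i,j,k}-C_{i,j,k-1}=\sum_{a=1}^i\sum_{b=1}^j A_{a,b,k}=\Sigma(A_{**k})_{i,j}.\]
So the $k$-direction differences of $C$ are exactly the corner sums of the $k$th plane of $A$. The same holds in the other two directions, with planes $A_{*j*}$ and $A_{i**}$ in place of $A_{**k}$. We also have $\Xi^{-1}(\Xi(A))=A$, so $\Xi^{-1}(C)_{i,j,k}=A_{i,j,k}$.

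\emph{Forward direction.} Let $A$ be a permutation hypermatrix. Its entries are $0,1$, so the entry condition holds. The boundary values are immediate. Since $C_{i,j,0}=0$ and each vertical line $A_{ij*}$ contains exactly one $1$, we get $C_{i,j,n}=\sum_{a\le i,b\le j}1=ij$, and symmetrically in the other two directions.

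For the difference condition, each plane $A_{**k}$ is a permutation matrix $P$ (one $1$ per row and per column, since rows and columns of $A$ have one $1$ each). For such $P$, the quantity $\Sigma(P)_{i,j}$ counts the $1$s in the $i\times j$ top-left block. The $i$ top rows contain $i$ ones, of which at most $n-j$ lie outside the first $j$ columns. Hence
\[\max(0,i+j-n)\le \Sigma(P)_{i,j}\le \min(i,j).\]
The same argument applies in the other two directions.

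\emph{Converse.} Suppose $C=\Xi(A)$ is a corner-sum hypermatrix with $\Xi^{-1}(C)$ a $0/1$ hypermatrix. Then $A$ is a $(0,1)$-hypermatrix, and it remains to show that every line contains exactly one $1$. Take the vertical line at $(i,j)$. By the identity above with $k=n$, and the boundary values $C_{\cdot,\cdot,n}=ij$,
\[\sum_{a\le i,\,b\le j}\sum_{c=1}^n A_{a,b,c}=C_{i,j,n}=ij.\]
Inclusion–exclusion in $(i,j)$ then gives
\[\sum_c A_{i,j,c}=ij-(i-1)j-i(j-1)+(i-1)(j-1)=1.\]
A $0/1$ line with sum $1$ has exactly one $1$. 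The same computation, using the boundary conditions $C_{i,n,j}=ij$ and $C_{n,i,j}=ij$, handles rows and columns.

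Notably, the converse uses only the $0/1$ condition and the boundary values, not the difference bounds. The only step needing care is the bound on corner sums of a permutation matrix in the forward direction, and that is an elementary counting argument. I do not expect a genuine obstacle; the main task is to state the plane-difference identity cleanly and for all three directions.
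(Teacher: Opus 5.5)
Your proposal is correct and follows the paper's proof: in the forward direction the unit line sums give the boundary values and each plane is a permutation matrix, so its corner sums lie between $\Sigma(J_n)_{i,j}=\max(0,i+j-n)$ and $\Sigma(I_n)_{i,j}=\min(i,j)$. In the converse the boundary values alone force every line sum to be $1$, which together with the $0/1$ entries gives a permutation hypermatrix. You simply make explicit the counting bound and the inclusion–exclusion computation (the same one the paper uses in its later line-sum lemma), which the paper leaves implicit.
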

\begin{proof}
Suppose $A$ is a permutation hypermatrix. Then $A_{i,j,k} \in \{0, 1\}$ for all $i,j,k$.  All line sums of $A$ are 1, which implies the boundary conditions of a corner-sum hypermatrix, and each plane $P$ of $A$ (in each direction) satisfies $\Sigma(J_n) \le \Sigma(P) \le \Sigma(I_n)$ entrywise. By definition, this means that $\Xi(A)$ is a corner-sum hypermatrix.

Now suppose that $C$ is a corner-sum hypermatrix with $\Xi^{-1}(C)_{i,j,k} \in \{0,1\}$ for all $i, j, k$. The boundary conditions imply all line sums of $A=\Xi^{-1}(C)$ are 1. Therefore all planes of $A$ (in each direction) are permutation matrices, and $A$ is a permutation hypermatrix.
\end{proof}

Similarly, corner-sum hypermatrices of ASHMs can be easily characterised.

\begin{theorem}\label{cornersum ASHMs}
Let $A$ be a hypermatrix of order $n$. Then $A$ is an ASHM if and only if $\Xi(A)$ is a corner-sum hypermatrix satisfying, for all $i, j, k \in [n]$, the following conditions
\begin{align}\label{ijk1}
\Xi(A)_{i,j,k} - \Xi(A)_{i-1,j,k} - \Xi(A)_{i,j-1,k} + \Xi(A)_{i-1,j-1,k}&\in \{0, 1\}\\\label{ijk2}
\Xi(A)_{i,j,k} - \Xi(A)_{i-1,j,k} - \Xi(A)_{i,j,k-1} + \Xi(A)_{i-1,j,k-1}&\in \{0, 1\}\\\label{ijk3}
\Xi(A)_{i,j,k} - \Xi(A)_{i,j-1,k} - \Xi(A)_{i,j,k-1} + \Xi(A)_{i,j-1,k-1}&\in \{0, 1\}.\end{align}
\end{theorem}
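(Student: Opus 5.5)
The plan is to translate both sides into statements about one-dimensional partial sums along lines of $A$ and then use the elementary characterisation of alternating sign sequences. Write $C=\Xi(A)$. The left-hand side of \eqref{ijk1} equals $\sum_{c=1}^{k} A_{i,j,c}$, a partial sum along the vertical line $A_{ij*}$. Likewise the left-hand side of \eqref{ijk2} equals $\sum_{b=1}^{j}A_{i,b,k}$, a partial sum along the row $A_{i*k}$. The left-hand side of \eqref{ijk3} equals $\sum_{a=1}^{i}A_{a,j,k}$, a partial sum along the column $A_{*jk}$. Each of these is a direct telescoping computation from the definition of $\Xi$.

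The key fact is that a sequence $x_1,\dots,x_n$ of integers is an alternating sign sequence (nonzero entries alternate in sign, starting and ending with $+1$) if and only if every partial sum $x_1+\dots+x_m$ lies in $\{0,1\}$ and the total sum is $1$. Indeed, partial sums in $\{0,1\}$ force each $x_m\in\{-1,0,1\}$ and force the nonzero entries to alternate starting with $+1$. A total sum of $1$ then forces the sequence to end with $+1$. The converse is immediate. Hence conditions \eqref{ijk1}--\eqref{ijk3} say exactly that every proper partial sum along every line is in $\{0,1\}$. Together with all line sums equal to $1$, this is the ASHM condition.

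For the forward direction, suppose $A$ is an ASHM. Then \eqref{ijk1}--\eqref{ijk3} hold by the observation above, and all line sums are $1$. It remains to check that $C$ is a corner-sum hypermatrix. The boundary values $C_{i,j,0}=0$ etc.\ are trivial. The values $C_{i,j,n}=ij$ etc.\ follow because the vertical line sums are $1$, so the $(i,j)$ box summed over all $k$ counts $ij$. The substantive part is the increment condition. We have $C_{i,j,k}-C_{i,j,k-1}=\Sigma(P_k)_{i,j}$, where $P_k=A_{**k}$ is the $k$th plane, and similarly in the other two directions. Each plane of an ASHM in any direction is an ASM, since its lines are lines of $A$. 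So it suffices to show that for an ASM $P$ of order $n$, $\max(0,i+j-n)\le\Sigma(P)_{i,j}\le\min(i,j)$. This holds because the column partial sums lie in $\{0,1\}$, giving $\Sigma(P)_{i,j}\le j$, and by symmetry $\Sigma(P)_{i,j}\le i$, which gives the upper bound. For the lower bound, write $\Sigma(P)_{i,j}=j-\sum_{a>i,b\le j}P_{a,b}$. The subtracted sum is at most $n-i$ by the same argument applied from the bottom (suffix sums are also in $\{0,1\}$). Nonnegativity follows from the partial sums being nonnegative. Alternatively, one can simply cite $\Sigma(J_n)\le\Sigma(P)\le\Sigma(I_n)$ for ASMs, as the paper does for permutation matrices in Lemma~\ref{cornersum permutations}.

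For the converse, suppose $C$ is a corner-sum hypermatrix satisfying \eqref{ijk1}--\eqref{ijk3}, and let $A=\Xi^{-1}(C)$. The boundary conditions give all line sums equal to $1$. For instance, taking $k=n$ in the expression for \eqref{ijk1} gives $C_{i,j,n}-C_{i-1,j,n}-C_{i,j-1,n}+C_{i-1,j-1,n}=ij-(i-1)j-i(j-1)+(i-1)(j-1)=1$. The partial-sum conditions then give alternation along every line by the key fact, so $A$ is an ASHM. The increment bounds in Definition~\ref{corner-sum def} are not even needed in this direction.

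The main obstacle is only bookkeeping: verifying the three telescoping identities, and the ASM bound $\Sigma(J_n)\le\Sigma(P)\le\Sigma(I_n)$ in the forward direction. The latter is where I would take the most care, using both prefix and suffix sums.
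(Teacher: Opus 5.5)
Your proposal is correct and is essentially the paper's argument. The paper reads \eqref{ijk1}--\eqref{ijk3} as unit increments of the plane differences $\Xi(A)_{**k}-\Xi(A)_{**k-1}=\Sigma(P_k)$ and invokes the ASM/corner-sum-matrix correspondence, which is exactly your line partial-sum identity viewed one dimension up. Your explicit verification of $\max(0,i+j-n)\le\Sigma(P)_{i,j}\le\min(i,j)$ for an ASM $P$ supplies a step the paper leaves implicit, with one correction: the bound $\sum_{a>i,\,b\le j}P_{a,b}\le n-i$ comes from the row prefix sums in the bottom $n-i$ rows, not from column suffix sums, which only give $\le j$.
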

\begin{proof}
Suppose $A$ is an ASHM and consider the $k$th plane $\Xi(A)_{**k}$ of its associated corner-sum hypermatrix. The matrix $C = \Xi(A)_{**k} - \Xi(A)_{**k-1}$ is the corner-sum matrix of the ASM occupying the $k$th plane of $A$. Since $C$ is a corner-sum matrix, $C_{i,j} - C_{i,j-1} \in \{0,1\}$. Since $C_{i,j} = \Xi(A)_{i,j,k} - \Xi(A)_{i,j,k-1}$, it follows that
\[\Xi(A)_{i,j,k} - \Xi(A)_{i,j,k-1} - \Xi(A)_{i,j-1,k} + \Xi(A)_{i,j-1,k-1} \in \{0,1\}.\]
The other conditions can be proved similarly by interchanging the roles of the $i,j,k$ directions.  Since the difference between any pair of adjacent planes in any direction is a corner-sum matrix, it follows that $\Xi(A)$ is a corner-sum hypermatrix.

Conversely, suppose $\Xi(A)$ is a corner-sum hypermatrix satisfying all the conditions in Eq.~\eqref{ijk1}--\eqref{ijk3} and consider its $k$th plane $\Xi(A)_{**k}$. As before, the matrix $C = \Xi(A)_{**k} - \Xi(A)_{**k-1}$ is $\Sigma(M)$ where $M$ is the matrix occupying plane $k$ of $A$. Since $\Xi(A)_{i,j,k} - \Xi(A)_{i,j,k-1} - \Xi(A)_{i,j-1,k} + \Xi(A)_{i,j-1,k-1}$ and $\Xi(A)_{i,j,k} - \Xi(A)_{i,j,k-1} - \Xi(A)_{i-1,j,k} + \Xi(A)_{i-1,j,k-1}$  are in $\{0,1\}$, it follows that $C_{i,j} - C_{i,j-1}$ and $C_{i,j} - C_{i-1,j}$ are in $\{0,1\}$ for all $1 \leq i,j \leq n$. Therefore, $C$ is a corner-sum matrix and $M$ is an ASM. The same can be proved in any direction by interchanging the roles of $i$, $j$, and $k$. Therefore $A$ is an ASHM.
\end{proof}
The following is a 3-dimensional analogue of the criterion in Equation~\eqref{eq:cornersumASM}.
\begin{theorem}\label{cornersum_implies_bruhat_entrywise}
Let $A$ and $B$ be ASHMs of order $n$. Then $A \preceq_B B$ if and only if $\Xi(A) \geq \Xi(B)$ entrywise.
\end{theorem}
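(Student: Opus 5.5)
The plan is to work entirely on the corner-sum side. The key observation is that $\Xi$ is linear and that the corner-sum of a T-block is the indicator function of a box.

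\textbf{Corner-sum of a T-block.} A positive T-block with nonzero entries in rows $i_1<i_2$, columns $j_1<j_2$ and planes $k_1<k_2$ can be written as the tensor product
\[T=(e_{i_1}-e_{i_2})\otimes(e_{j_1}-e_{j_2})\otimes(e_{k_1}-e_{k_2}).\]
This agrees with the sign pattern $\begin{pmatrix}+&-\\-&+\end{pmatrix}\nearrow\begin{pmatrix}-&+\\+&-\end{pmatrix}$. The one-dimensional partial sum of $e_a-e_b$ is the indicator of $\{a,\dots,b-1\}$, and $\Xi$ factorises over the tensor product. Hence $\Xi(T)$ is the $0/1$ indicator of the box $[i_1,i_2-1]\times[j_1,j_2-1]\times[k_1,k_2-1]$. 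In particular $\Xi(T)\ge 0$. For a contiguous T-block with $i_2=i_1+1$, $j_2=j_1+1$, $k_2=k_1+1$, the box is the single point $(i_1,j_1,k_1)$, which lies in $[1,n-1]^3$.

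\textbf{Forward direction.} If $A\preceq_B B$, then $A=B+\sum T_\ell$ for positive T-blocks $T_\ell$. By linearity, $\Xi(A)=\Xi(B)+\sum\Xi(T_\ell)\ge\Xi(B)$ entrywise.

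\textbf{Converse.} Let $D=\Xi(A)-\Xi(B)$. This is a nonnegative integer hypermatrix. Since $A$ and $B$ are ASHMs, Theorem~\ref{cornersum ASHMs} shows that $\Xi(A)$ and $\Xi(B)$ are corner-sum hypermatrices, so they share the boundary values of Definition~\ref{corner-sum def}. Therefore $D$ vanishes whenever some index equals $0$ or $n$, and is supported on $[1,n-1]^3$. Writing $T^{(i,j,k)}$ for the contiguous positive T-block whose corner-sum is the indicator of the point $(i,j,k)$, injectivity and linearity of $\Xi$ give
\[A=B+\sum_{(i,j,k)\in[1,n-1]^3}D_{i,j,k}\,T^{(i,j,k)},\]
with all coefficients nonnegative integers. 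So $A$ is obtained from $B$ by repeated addition of contiguous positive T-blocks. By Lemma~\ref{contiguous T-blocks} and Corollary~\ref{contiguous T corollary}, this is equivalent to $A\preceq_B B$.

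\textbf{Main obstacle.} The difficulty arises if the order on ASHMs requires every intermediate hypermatrix to be an ASHM, as is implicit in comparing ASHMs via T-blocks. In that case the additions must be ordered so that each partial sum $\Xi(B)+D'$, with $0\le D'\le D$, still satisfies Definition~\ref{corner-sum def} together with \eqref{ijk1}--\eqref{ijk3}.

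I would handle this greedily: at each stage, increase by $1$ a single entry of $\Xi(B)$ at a point where $D$ is still positive, choosing an extremal such point. Concretely, take a point with $C_{i,j,k}<\Xi(A)_{i,j,k}$ that is maximal in the product order among points with the smallest current value, where $C$ denotes the current corner-sum hypermatrix.

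Raising $C_{i,j,k}$ by $1$ only affects the local difference conditions involving its neighbours $(i\pm1,j,k)$, $(i,j\pm1,k)$ and $(i,j,k\pm1)$. I expect to show that any violation created at such a neighbour would force $\Xi(A)$ itself to violate the same condition, using $C\le\Xi(A)$ and the extremal choice of the point. This mirrors the two-dimensional argument in \cite{bruhat cs}, applied simultaneously to the three families of conditions \eqref{ijk1}--\eqref{ijk3}. This local verification is the step I expect to need the most care.
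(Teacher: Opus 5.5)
Your proof is correct and is essentially the paper's: you use linearity and injectivity of $\Xi$, the fact that $\Xi$ of a contiguous positive T-block is the indicator of a single point of $[1,n-1]^3$, and a pointwise decomposition of $D=\Xi(A)-\Xi(B)$. Two details are slightly more careful than the paper's version: your box-indicator formula for arbitrary T-blocks makes the forward direction independent of Lemma~\ref{contiguous T-blocks}, and you check explicitly that $D$ vanishes when an index is $0$ or $n$.

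You should drop the ``main obstacle''. The paper's $\preceq_B$ places no condition on intermediate hypermatrices: already in the example following Definition~\ref{BLS}, the intermediate $L_2+T$ has cell $(2,2)$ equal to $1+2-3$, so it is not an ASHM. Under your stricter reading the converse would in fact be false. The Latin squares with rows $(3,1,2),(2,3,1),(1,2,3)$ and $(3,1,2),(1,2,3),(2,3,1)$ have corner-sums differing by $1$ exactly at $(2,1,1)$, $(2,2,1)$ and $(2,2,2)$. Every intermediate therefore has $\rho\in\{70,71\}$, whereas the fourteen ASHMs of order $3$ have $\rho\in\{68,69,72,75,76\}$. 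So the local verification you expect to carry out in the greedy step cannot succeed.
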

\begin{proof}
 By Corollary \ref{contiguous T corollary}, $A \preceq_B B$ if and only if there exists a sequence of positive contiguous T-blocks $T_1, T_2, \dots, T_m$ such that $A = B + T_1 + T_2 + \dots + T_m$. 
Since $\Xi$ is a linear transformation, 
\[\Xi(A) = \Xi(B + T_1 + T_2 + \dots + T_m) = \Xi(B) + \Xi(T_1) + \Xi(T_2) + \dots + \Xi(T_m).\]
Now consider $\Xi(T_i)$, and recall that $T_i$ is a contiguous positive T-block. 
Then
\[\Xi(T_i) = \Xi\left(\left(\begin{array}{rr}1&-1\\-1&1\end{array}\right)\nearrow\left(\begin{array}{rr}-1&1\\1&-1\end{array}\right)\right) = \left(\begin{array}{rr}1&0\\0&0\end{array}\right)\nearrow\left(\begin{array}{rr}0&0\\0&0\end{array}\right).\]
Therefore if $A \preceq_B B$, each T-block added to $B$ to obtain $A$ adds 1 to some entry of  the corner-sum, and hence $\Xi(A) \geq \Xi(B)$. Conversely if $\Xi(A) \geq \Xi(B)$, in each position for which there is a difference $d$ in the corner-sums, adding $d$ positive contiguous T-blocks gives the required sequence of T-blocks.
\end{proof}

Recall the definition of $\sigma[i,j]$ for $\sigma\in S_n$ described at the end of Section \ref{cornersum subsection} in order to state a criterion for the Bruhat order on permutations. We now generalise this to Latin squares, starting with an analogue of $\sigma[i,j]$ for (subarrays of) Latin squares. Given a subarray $X$ of $L\in\L_n$, define $\pos(X)$ to be its set of positions and  $\sym(X)$ its set of symbols. For $(i,j)\in \pos(X)$ and $ k\in \sym(X)$, set  $$X[i,j,k] = |\{(a,b)\in([i]\times[j])\cap \pos(X):\:L_{a,b} \le k\}|.$$

For $L_1,L_2 \in \L_n$ and $i,j,k \in [0,n]$, the hypermatrix with $L_1[i,j,k]$ as its entries is equal to its corner-sum hypermatrix $\Xi(L_1)$. Therefore Theorem~\ref{cornersum_implies_bruhat_entrywise} implies that $L_1 \preceq_B L_2$ if and only if $L_1[i,j,k] \ge L_2[i,j,k]$ for all $i,j,k \in [n]$.  Note that if $L_1,L_2 \in \L_n$ differ only by subarrays $X_1$ and $X_2$ occupying the same positions and symbols, 
it suffices to check $X_1[i,j,k] \ge X_2[i,j,k]$ for all $(i,j)\in \pos(X_1)$ and $k\in \sym(X_1)$ to confirm that $L_1[i,j,k] \ge L_2[i,j,k]$ for all $i,j,k \in [n]$. This is because any other entry of $L_1$ is equal to the corresponding entry of $L_2$, and therefore these entries add the same value to $\Xi(L_1)$ and $\Xi(L_2)$.

\begin{example}\label{subarray} Consider the following Latin squares $L_1$ and $L_2$ with subarrays $X_1$ and $X_2$ occupying positions $P = \{(1,1), (1,2), (1,3), (2,1), (2,2), (3,1), (3,3)\}$ and symbols $S = [3]$.
    \[L_1 = \begin{array}{|c|c|c|c|}
\hline
1	&	2	&	3	&	4\\
\hline
2	&	3	&	4	&	1\\
\hline
3	&	4	&	1	&	2\\
\hline
4	&	1	&	2	&	3\\
\hline
\end{array}\,, \:L_2 = \begin{array}{|c|c|c|c|}
\hline
2	&	3	&	1	&	4\\
\hline
3	&	2	&	4	&	1\\
\hline
1	&	4	&	3	&	2\\
\hline
4	&	1	&	2	&	3\\
\hline
\end{array}\,, \:X_1 = \begin{array}{|c|c|c|}
\hline
1	&	2	&	3	\\
\hline
2	&	3	&		\\
\hline
3	&		&	1	\\
\hline
\end{array}\,, \: X_2 = \begin{array}{|c|c|c|}
\hline
2	&	3	&	1	\\
\hline
3	&	2	&		\\
\hline
1	&		&	3	\\
\hline
\end{array}\]

The Latin squares $L_1$ and $L_2$ differ only by $X_1$ and $X_2$, and therefore it suffices to check $X_1[i,j,k] \ge X_2[i,j,k]$ for all $(i,j)\in P$ and $k\in S$ to confirm that $L_1[i,j,k] \ge L_2[i,j,k]$ for all $i,j,k \in [n]$.
\[X_1[i,j,1] = \begin{array}{|c|c|c|}
\hline
1	&	1	&	1	\\
\hline
1	&	1	&		\\
\hline
1	&		&	2	\\
\hline
\end{array}\,, \: X_1[i,j,2] = \begin{array}{|c|c|c|}
\hline
1	&	2	&	2	\\
\hline
2	&	3	&		\\
\hline
2	&		&	4	\\
\hline
\end{array}\,, \: X_1[i,j,3] = \begin{array}{|c|c|c|}
\hline
1	&	2	&	3	\\
\hline
2	&	4	&		\\
\hline
3	&		&	7	\\
\hline
\end{array}\]
\[X_2[i,j,1] = \begin{array}{|c|c|c|}
\hline
0	&	0	&	1	\\
\hline
0	&	0	&		\\
\hline
1	&		&	2	\\
\hline
\end{array}\,, \: X_2[i,j,2] = \begin{array}{|c|c|c|}
\hline
1	&	1	&	2	\\
\hline
1	&	2	&		\\
\hline
2	&		&	4	\\
\hline
\end{array}\,, \: X_2[i,j,3] = \begin{array}{|c|c|c|}
\hline
1	&	2	&	3	\\
\hline
2	&	4	&		\\
\hline
3	&		&	7	\\
\hline
\end{array}\]

Since $\Xi(L_1)_{i,j,k} = L_1[i,j,k]$ for all $i,j,k \in [0,n]$, and $X_1[i,j,k] \ge X_2[i,j,k]$ for all $(i,j)\in P$ and $k\in S$, we conclude that $L_1 \preceq_B L_2$. We generalise this idea in the following definition and theorem.
\end{example}

\begin{definition}
    Let $X$ be a subarray of $L\in\L_n$. Then $Y$ is a \emph{decreasing (resp.\ increasing) replacement} for $X$ if \begin{enumerate}
\item$\pos(X)=\pos(Y)$,
\item\label{DR} each row and column of $X$ contains exactly the same symbols as the corresponding row or column of $Y$, and 
\item $Y[i,j,k] \ge X[i,j,k]$ (resp.~$Y[i,j,k] \le X[i,j,k]$) for all $(i,j) \in  \pos(X)$ and $k \in \sym(X)$.\end{enumerate}
The array resulting from replacing $X$ in $L$ with $Y$ is denoted $L(X \mapsto Y)$.
\end{definition}

Note that by the second condition in the definition above, applying a decreasing (resp.~increasing) replacement to a subarray of a Latin square results in another Latin square. Moreover, if $L'$ is obtained from $L$ by a decreasing replacement, then $L'\preceq_B L$. The converse is also true, as stated in the following.

\begin{theorem}\label{decreasing theorem} Let $L_1,L_2\in\L_n$. Then $L_1\preceq_BL_2$ if and only if $L_1$ can be obtained from $L_2$ by a decreasing replacement.
\end{theorem}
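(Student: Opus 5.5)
The reverse direction is the remark made just before the statement. If $L_1 = L_2(X\mapsto Y)$ with $Y$ a decreasing replacement for $X$, then $L_1$ and $L_2$ agree outside $\pos(X)$. The observation following Theorem~\ref{cornersum_implies_bruhat_entrywise} shows that $Y[i,j,k]\ge X[i,j,k]$ on $\pos(X)\times\sym(X)$ gives $L_1[i,j,k]\ge L_2[i,j,k]$ for all $i,j,k$. Since $L[i,j,k]=\Xi(L)_{i,j,k}$, Theorem~\ref{cornersum_implies_bruhat_entrywise} then yields $L_1\preceq_B L_2$.

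I would first make that observation precise, since it is only sketched. For $(i,j)\notin\pos(X)$ or $k\notin\sym(X)$, split the count $L[i,j,k]$ into contributions from cells of $[i]\times[j]$ outside $\pos(X)$, which agree in $L_1$ and $L_2$, and contributions from cells inside $\pos(X)$. Then reduce the inside contribution to a value of $X[\cdot,\cdot,\cdot]$ at a nearby index. For $k\notin\sym(X)$, replace $k$ by the largest element of $\sym(X)$ not exceeding $k$; if there is none, the inside count is $0$ for both. For $(i,j)\notin\pos(X)$, the inside count is a sum over the cells of $\pos(X)\cap([i]\times[j])$. I expect this to be the delicate point: $\pos(X)\cap([i]\times[j])$ need not be an initial rectangle with a corner in $\pos(X)$, so it need not equal a single $X[i',j',k]$. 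If the reduction fails, I would instead use condition (2) of the definition, that rows and columns carry the same symbol sets. This lets me write the inside count as a whole-row or whole-column count, which agrees for $X$ and $Y$, plus or minus a count over a region ending at a cell of $\pos(X)$.

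The forward direction is where the real content lies, but it is easy given the right choice of subarray. Suppose $L_1\preceq_B L_2$. Take $X$ to be the subarray of $L_2$ on the set of positions where $L_1$ and $L_2$ differ, and $Y$ the subarray of $L_1$ on the same positions. Then $\pos(X)=\pos(Y)$ by construction, and $L_1=L_2(X\mapsto Y)$. Condition (2) holds because in each row both Latin squares contain all of $[n]$ and agree on the cells outside $\pos(X)$, so the multisets of symbols on the differing cells coincide; the same argument applies to columns. For condition (3), take $(i,j)\in\pos(X)$ and $k\in\sym(X)$. The quantity $L[i,j,k]$ is $Y[i,j,k]$ or $X[i,j,k]$ plus a common contribution from the cells outside $\pos(X)$. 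Theorem~\ref{cornersum_implies_bruhat_entrywise} gives $\Xi(L_1)\ge\Xi(L_2)$, and subtracting the common contribution gives $Y[i,j,k]\ge X[i,j,k]$.

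One caveat concerns the degenerate case $L_1=L_2$, where $X$ is empty; I would treat the empty replacement as allowed, or else choose $X=Y=L_2$. In the forward direction, note also that $\sym(X)$ and $\sym(Y)$ coincide by condition (2), so $Y[i,j,k]$ is indexed correctly. Overall, the main obstacle is the bookkeeping in the reverse direction for indices $(i,j)$ outside $\pos(X)$, handled as described in the second paragraph.
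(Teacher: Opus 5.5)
Your forward direction is the same as the paper's and is correct. The gap is in the reverse direction, at exactly the step you flag as delicate, and better bookkeeping cannot close it. When $\pos(X)$ is an arbitrary set of cells, condition (3) checked only on $\pos(X)\times\sym(X)$ does \emph{not} imply $\Xi(L_1)\ge\Xi(L_2)$. The paper's proof calls this direction clear, relying on the same remark after Theorem~\ref{cornersum_implies_bruhat_entrywise}, so it has the same hole.

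Here is a counterexample. Take $n=3$, let $L_2$ have rows $(2,1,3),(1,3,2),(3,2,1)$, and let $L_1$ have rows $(1,2,3),(2,3,1),(3,1,2)$.
\begin{itemize}
\item They differ exactly on $S=\{(1,1),(1,2),(2,1),(2,3),(3,2),(3,3)\}$, which carries only the symbols $1,2$. Let $X$ and $Y$ be the subarrays of $L_2$ and $L_1$ on $S$.
\item Condition (2) holds.
\item Condition (3) holds. For $k=2$ both sides count all cells. For $k=1$ the pairs $(Y[i,j,1],X[i,j,1])$ at $(1,1),(1,2),(2,1),(2,3),(3,2),(3,3)$ are $(1,0),(1,1),(1,1),(2,2),(2,2),(3,3)$.
\item Hence $L_1=L_2(X\mapsto Y)$ with $Y$ a decreasing replacement in the sense of the definition.
\item However, the top-left $2\times2$ block contains one $1$ in $L_1$ and two in $L_2$. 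So $\Xi(L_1)_{2,2,1}=1<2=\Xi(L_2)_{2,2,1}$ and $L_1\not\preceq_B L_2$; both are minimal elements in Figure~\ref{fig:L3}.
\end{itemize}

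Your fallback breaks precisely here. The set $([2]\times[2])\cap S$ equals $([2]\times[3])\cap S$ with the cell $(2,3)$ removed. The count over $([2]\times[3])\cap S$ agrees for $X$ and $Y$ by condition (2). The subtracted term, however, is a single-cell count, not a north-west count at a cell of $S$, so condition (3) says nothing about it. Here it is $1$ for $Y$ and $0$ for $X$, which has the wrong sign.

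So the statement needs a modified definition, not a cleverer proof. There are two natural repairs.
\begin{itemize}
\item Require condition (3) for all $(i,j)\in[n]^2$ and $k\in[n]$, with $X[i,j,k]$ given by the same formula. The reverse direction is then immediate from Theorem~\ref{cornersum_implies_bruhat_entrywise}, and your forward argument is unchanged.
\item Require $\pos(X)$ to be closed under componentwise maxima, for instance $\pos(X)=R\times C$ for a set of rows $R$ and columns $C$. For any $(i,j)$ with $\pos(X)\cap([i]\times[j])\neq\emptyset$, the componentwise maximum $(i',j')$ of that set lies in $\pos(X)$, and $\pos(X)\cap([i]\times[j])=\pos(X)\cap([i']\times[j'])$. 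So your first reduction goes through. In the forward direction, take $R$ and $C$ to be the rows and columns containing a differing cell. Including cells where $L_1$ and $L_2$ agree does not affect condition (2).
\end{itemize}
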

\begin{proof}
    
    The if direction is clear.

    Now suppose $L_1 \preceq_B L_2$. Let $X$ be the subarray of $L_1$ and $Y$ be the subarray of $L_2$ occupying exactly the positions in which $L_1$ and $L_2$ differ. Since $L_1$ and $L_2$ are Latin squares, each row (column) of $X$ must contain the same symbols as the corresponding row (column) of $Y$. Theorem~\ref{cornersum_implies_bruhat_entrywise} implies that $L_1[i,j,k] \ge L_2[i,j,k]$ for all $i,j,k \in [n]$. Since $L_1$ and $L_2$ do not differ outside of $X$ and $Y$, it follows that $X[i,j,k] \ge Y[i,j,k]$ for all $(i,j)\in \pos(X)$ and $k\in \sym(X)$, and $X$ is a decreasing replacement for $Y$.
\end{proof}

Suppose $L \in \L_n$ has a subarray $X$ with decreasing replacement $Y$. If a subarray $X'$ of $L$ has a decreasing replacement $Y'$ such that $L(X' \mapsto Y')$ also has a decreasing replacement which results in $L(X\mapsto Y)$, we say that the decreasing replacement of $X$ with $Y$ is \emph{decomposable} via $Y'$. We say that the decreasing replacement is \emph{non-decomposable} otherwise. Decomposable and non-decomposable increasing replacements are defined similarly.

\begin{theorem}\label{thm:coverLS} Let $L_1,L_2\in\L_n$. Then $L_1\cover_BL_2$ if and only if $L_1$ can be obtained from $L_2$ by a non-decomposable decreasing replacement $Y$ of a subarray $X$ of $L_2$.
\end{theorem}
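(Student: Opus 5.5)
Both directions rest on Theorem~\ref{decreasing theorem}, which identifies $L'\preceq_B L$ with ``$L'$ arises from $L$ by a decreasing replacement''. The key is that the intermediate Latin squares $Z$ with $L_1\prec_B Z\prec_B L_2$ correspond exactly to the witnesses of decomposability.

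For the forward direction, suppose $L_1\cover_B L_2$. By Theorem~\ref{decreasing theorem}, $L_1=L_2(X\mapsto Y)$ for some subarray $X$ of $L_2$ with decreasing replacement $Y$; we may take $X$ to be the subarray on the positions where $L_1$ and $L_2$ differ, as in the proof of that theorem. Suppose this replacement were decomposable via some $Y'$. Set $Z=L_2(X'\mapsto Y')$, a Latin square by condition~\ref{DR}. Since $Z$ comes from $L_2$ by a decreasing replacement, $Z\preceq_B L_2$. By the definition of decomposability, $L_1$ comes from $Z$ by a decreasing replacement, so $L_1\preceq_B Z$. For this to contradict the cover we need $Z\neq L_1$ and $Z\neq L_2$.

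Pinning down those strict inequalities is the main delicate point. I would read the definition of decomposability as requiring both the replacement $X'\mapsto Y'$ and the subsequent replacement to be nontrivial, i.e.\ $Y'\neq X'$ and $L(X'\mapsto Y')\neq L(X\mapsto Y)$. Otherwise every replacement would be decomposable trivially, by taking $Y'=X'$ or taking $Y'$ to be the full replacement. I would state this convention explicitly at the start of the proof. Under it $L_1\prec_B Z\prec_B L_2$, contradicting the cover, so the replacement is non-decomposable.

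For the converse, suppose $L_1=L_2(X\mapsto Y)$ is a non-decomposable decreasing replacement. Then $L_1\neq L_2$, and $L_1\preceq_B L_2$ by Theorem~\ref{decreasing theorem}, so $L_1\prec_B L_2$. Assume some $Z\in\L_n$ satisfies $L_1\prec_B Z\prec_B L_2$. Applying Theorem~\ref{decreasing theorem} to $Z\preceq_B L_2$ gives a subarray $X'$ of $L_2$, namely the positions where they differ, with decreasing replacement $Y'$ and $Z=L_2(X'\mapsto Y')$. Applying it again to $L_1\preceq_B Z$ shows $L_1$ is obtained from $Z$ by a decreasing replacement. Since $Z$ differs from both $L_1$ and $L_2$, these are nontrivial replacements, so the replacement of $X$ by $Y$ is decomposable via $Y'$, a contradiction. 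Hence $L_1\cover_B L_2$.

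No computation with $\Xi$ is needed beyond what Theorem~\ref{decreasing theorem} already supplies, which in turn relies on Theorem~\ref{cornersum_implies_bruhat_entrywise}.
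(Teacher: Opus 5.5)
Your proposal is correct and follows the paper's proof essentially step for step: both directions reduce to Theorem~\ref{decreasing theorem}, with an intermediate Latin square $Z$ (the paper's $L'$) serving as the witness of decomposability. The paper leaves implicit your convention that both replacements in a decomposition must be nontrivial, and it is exactly what makes the paper's ``it is clear'' in the forward direction valid, so stating it explicitly is a worthwhile clarification rather than a deviation.
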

\begin{proof}
    Suppose $L_1\cover_BL_2$. Theorem \ref{decreasing theorem} implies that $L_1$ can be obtained from $L_2$ by a decreasing replacement $Y$ of some subarray $X$ of $L_2$. Since $L_1\cover_BL_2$, it is clear that the decreasing replacement of $X$ with $Y$ is not decomposable.
    
    Now suppose $L_1$ can be obtained from $L_2$ by a non-decomposable decreasing replacement $Y$ as in the statement of the theorem. By Theorem \ref{decreasing theorem}, this implies $L_1 \preceq_B L_2$. Now suppose there is some $L' \in \L_n$ which is not $L_1$ or $L_2$ and such that $L_1 \preceq_B L' \preceq_B L_2$. It follows from Theorem \ref{decreasing theorem} that $L'$ can be obtained by a decreasing replacement of some subarray of $L_2$, and $L_1$ can then be obtained by a decreasing replacement of some subarray of $L'$. This contradicts the fact that $Y$ is non-decomposable.
\end{proof}

Examples suggest that to check whether or not a decreasing replacement $X \mapsto Y$ is decomposable, it should be sufficient to consider decreasing replacements occupying indices and symbols between the minimum and maximum indices and symbols occupied by $X$ (i.e.~within the \emph{convex hull} of~$X$). Furthermore, for decreasing replacements of a subarray $X'$ containing $X$, we need only consider those for which there is no decreasing replacement on $X'\setminus X$ (i.e.~those for which the replacement does not consist of two commuting replacements, one of which is $X \mapsto Y$).

\medskip

\begin{example} Continuing from Example~\ref{subarray},  making the following increasing replacements from positions $(1,1)$ to $(3,3)$ in $L_1$ and $(1,1)$ to $(2,4)$ in $L_2$, respectively, results in Latin squares $L_1'$ and $L_2'$.

\[\begin{array}{|c|c|c|}
\hline
1	&		&	3	\\
\hline
	&	\phantom{3}	&		\\
\hline
3	&		&	1	\\
\hline
\end{array} \mapsto \begin{array}{|c|c|c|}
\hline
3	&		&	1	\\
\hline
	&	\phantom{3}	&		\\
\hline
1	&		&	3	\\
\hline
\end{array}: \:L_1' = \begin{array}{|c|c|c|c|}
\hline
3	&	2	&	1	&	4\\
\hline
2	&	3	&	4	&	1\\
\hline
1	&	4	&	3	&	2\\
\hline
4	&	1	&	2	&	3\\
\hline
\end{array},\:
\begin{array}{|c|c|c|c|}
\hline
2	&	3	&	1 & 4	\\
\hline
3	&	2	&	4 &1	\\
\hline
\end{array} \mapsto
\begin{array}{|c|c|c|c|}
\hline
3	&	2	&	4 &1	\\
\hline
2	&	3	&	1 & 4	\\
\hline
\end{array}:\: L_2' = \begin{array}{|c|c|c|c|}
\hline
3	&	2	&	4	&	1\\
\hline
2	&	3	&	1	&	4\\
\hline
1	&	4	&	3	&	2\\
\hline
4	&	1	&	2	&	3\\
\hline
\end{array}\]

The increasing replacement from $L_1$ to $L_1'$ is decomposable via $X_2$, and therefore $L_1'$ does not cover $L_1$. In fact, $L_1 \cover_B L_2 \cover_B L_1'$. Similarly, the increasing replacement from $L_2$ to $L_2'$ is decomposable via either of the  (commuting) replacements restricted to the upper-left or to the upper-right $2\times2$ subarray. $L_2'$ covers the two Latin squares resulting from each of these replacements, and these Latin squares cover $L_2$.
\end{example}
\medskip

We end this section with structural results about the Bruhat order on corner-sum hypermatrices. As customary, we write $x\vee y$ for the \emph{join} of $x$ and $y$ (i.e.~the least upper bound of $x$ and $y$, if it exists), and $x\wedge y$ for the \emph{meet} of $x$ and $y$ (i.e.~the greatest lower bound of $x$ and $y$, if it exists).

We denote the set of corner-sum hypermatrices of order $n$ by $\mathcal{C}_n$.

\begin{theorem}\label{thm:CSlattice}
The poset $(\C_n, \ge)$ is a lattice.
\end{theorem}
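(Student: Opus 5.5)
The plan is to show that $\C_n$ is closed under entrywise maximum and entrywise minimum. Since the order is entrywise comparison, this immediately gives a lattice with $C\vee D=\max(C,D)$ and $C\wedge D=\min(C,D)$, where $\vee$ is taken with respect to $\ge$ as in Example~\ref{sup inf example}. Indeed, if $\max(C,D)\in\C_n$, then it is an upper bound of $C$ and $D$ in the entrywise order. It is also below every entrywise upper bound $E$, since $E\ge C$ and $E\ge D$ give $E\ge\max(C,D)$. So it is the least upper bound, and the dual argument handles the minimum. Because $\C_n$ is finite and nonempty (it contains $\Xi(L)$ for every Latin square $L$), nothing further is required beyond closure.

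To prove closure, let $C,D\in\C_n$ and set $M=\max(C,D)$. First, the boundary conditions hold trivially, because $C$ and $D$ agree on all boundary entries (these are $0$ or $ij$), so $M$ takes the same values there. The substantive step is the difference condition. For a fixed direction, say $k$, write $u=(i,j,k-1)$ and $v=(i,j,k)$, and let $\ell=\max(0,i+j-n)$ and $h=\min(i,j)$. We know
\[\ell\le C_v-C_u\le h\quad\text{and}\quad \ell\le D_v-D_u\le h.\]
We must show $\ell\le M_v-M_u\le h$. This is the standard fact that the maximum of two functions with bounded increments has the same increment bounds, and we prove it directly. For the upper bound, suppose without loss of generality that $M_v=C_v$. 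Then $M_v-M_u\le C_v-C_u\le h$, since $M_u\ge C_u$. For the lower bound, suppose $M_u=C_u$. Then $M_v-M_u\ge C_v-C_u\ge\ell$, since $M_v\ge C_v$. The other two directions are identical after permuting the roles of the indices, because the definition of $\C_n$ is symmetric in the three coordinates. The argument for $\min(C,D)$ is the mirror image, obtained by exchanging the roles of $u$ and $v$ in the two inequalities.

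I do not expect a real obstacle. The only point needing care is that the allowed increment interval $\{\ell,\dots,h\}$ depends only on the position $(i,j)$ in the fixed plane and on $n$, not on the hypermatrix. This is exactly what makes the one-dimensional max/min argument apply line by line. It is worth remarking that no integrality or parity issues arise, since the entries of $M$ are entries of $C$ or $D$. Finally, I will note that distributivity follows from the same description, because $\max$ and $\min$ distribute over each other entrywise. This foreshadows Corollary~\ref{cor:FDL}, but it is not required for the present statement.
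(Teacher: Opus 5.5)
Your proposal is correct and is essentially the paper's proof: both show that the entrywise $\max(C,D)$ and $\min(C,D)$ stay in $\C_n$ (the boundary entries agree, and the increment bounds depend only on position and $n$) and are therefore the two bounds, and your one-line increment estimates are a cleaner version of the paper's three-case comparison. The only slip is cosmetic: in $(\C_n,\ge)$ the join is the entrywise $\min$ and the meet is the entrywise $\max$, as the paper states, so your $\vee$ and $\wedge$ are those of the dual order---this is harmless, since a poset is a lattice exactly when its dual is.
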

\begin{proof}
Let $C$ and $D$ be two corner-sum hypermatrices. We will prove that they have a unique supremum (or join) and a unique infimum (or meet), and that these are respectively $\min(C,D)$ and $\max(C,D)$ (entrywise). This implies that corner-sum hypermatrices form a lattice.

We start by showing that the hypermatrices obtained by taking entrywise minima and maxima are corner-sum hypermatrices. Clearly, the boundary conditions of a corner-sum hypermatrix are satisfied by $\min(C, D)$ and $\max(C,D)$, as these entries coincide in $C$ and $D$.

Consider the difference between any consecutive entries in $\min(C, D)$. This is bounded above and below according to the definition of corner-sum hypermatrices. If the corresponding entries of $C$ are both less than those of $D$, then the difference in $\min(C, D)$ is equal to the difference in $C$. If the entries of $D$ are both less than those of $C$, then the difference in $\min(C, D)$ is equal to the difference in $D$. If one entry of $C$ is less and the other of $D$ is less, then the difference in $\min(C, D)$ is in between the difference in $C$ and $D$. In any case, this difference will be within the bounds required for a corner-sum hypermatrix. Similarly, the difference in $\max(C,D)$ will be within the required bounds. Therefore, both matrices are corner-sum hypermatrices.

 If $X$ is a corner-sum hypermatrix with any entry less than the corresponding entry of $\max(C,D)$, then $X$ does not dominate both $C$ and $D$. Similarly, if $X$ has any entry greater than $\min(C,D)$, then $X$ is not dominated by both $C$ and $D$. Therefore $\max(C,D)$ and $\min(C,D)$ are the unique greatest lower and least upper bounds of $C$ and $D$, respectively.
\end{proof}

We henceforth define $A \preceq_B B$ for $A,B \in \Xi^{-1}(\C_n)$ if and only if $A$ can be obtained from $B$ by the addition of positive T-blocks. Recall that Theorem~\ref{cornersum_implies_bruhat_entrywise} implies that entrywise comparison of corner-sums encodes the Bruhat order for ASHMs, which trivially extends to $\Xi^{-1}(\C_n)$. For the remainder of the paper, we use $\C_n$ to denote corner-sum hypermatrices under this order.

Note that by Corollary~\ref{contiguous T corollary} and Theorem~\ref{cornersum_implies_bruhat_entrywise},  $L_1,L_2 \in \Xi^{-1}(\C_n)$ satisfy $L_1\cover_B L_2$ if and only if they differ by a single positive contiguous T-block.
 
Recall that a lattice $\mathsf{L}$ is distributive if and only if  $x\wedge(y\vee z) = (x\wedge y)\vee (x\wedge z)$ for all $x,y,z \in \mathsf{L}$.

\begin{corollary}\label{cor:FDL} The corner-sum lattice $\mathcal C_n$ is distributive.\end{corollary}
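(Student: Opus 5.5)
The plan is to deduce distributivity directly from the description of meet and join in the proof of Theorem~\ref{thm:CSlattice}. There we showed that for $C,D\in\C_n$ the lattice operations are computed entrywise. Since the order on $\C_n$ is reverse entrywise domination ($C\preceq_B D$ iff $C\ge D$), the join is $C\vee D=\min(C,D)$ and the meet is $C\wedge D=\max(C,D)$, both entrywise. The key point is that these hypermatrices again lie in $\C_n$, which Theorem~\ref{thm:CSlattice} already established. Hence $\C_n$ is a sublattice of the product lattice $\mathbb{Z}^{[0,n]^3}$, ordered by reverse entrywise comparison and with operations taken coordinatewise.

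With this identification, I would verify the distributive law one coordinate at a time. Fix $x,y,z\in\C_n$ and an index $(i,j,k)\in[0,n]^3$, and write $a=x_{i,j,k}$, $b=y_{i,j,k}$, $c=z_{i,j,k}$. At this coordinate, $x\wedge(y\vee z)$ has entry $\max(a,\min(b,c))$. Likewise $(x\wedge y)\vee(x\wedge z)$ has entry $\min(\max(a,b),\max(a,c))$. These agree because $(\mathbb{Z},\le)$ is a chain, and every chain is a distributive lattice.

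To check the identity $\max(a,\min(b,c))=\min(\max(a,b),\max(a,c))$ explicitly, I would split into cases according to where $a$ falls among $b$ and $c$. Assume without loss of generality that $b\le c$.
\begin{itemize}
\item If $a\le b$, both sides equal $b$.
\item If $b\le a\le c$, both sides equal $a$.
\item If $a\ge c$, both sides equal $a$.
\end{itemize}
Since the two hypermatrices agree at every coordinate, they are equal, and the distributive identity holds in $\C_n$.

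There is no substantive obstacle. The one point needing care is that the lattice operations of $\C_n$ really coincide with the entrywise $\min$ and $\max$, and this is exactly the content of Theorem~\ref{thm:CSlattice}: closure under entrywise $\min$ and $\max$ together with uniqueness of the bounds. Once that is granted, $\C_n$ is a sublattice of a product of chains, and distributivity is inherited because distributivity is an equational property and therefore passes to sublattices and products. I would also note that $\C_n$ is finite, since each entry is bounded between $0$ and $ij$-type boundary values. This makes $\C_n$ a finite distributive lattice, which would allow Birkhoff's representation theorem to be invoked later if needed.
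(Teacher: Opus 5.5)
Your proposal is correct and follows essentially the same route as the paper: both use Theorem~\ref{thm:CSlattice} to identify join and meet with entrywise $\min$ and $\max$, then reduce distributivity to the integer identity $\max(a,\min(b,c))=\min(\max(a,b),\max(a,c))$ checked by cases. Your framing of $\C_n$ as a sublattice of a product of chains, and your use of the $b\leftrightarrow c$ symmetry to cut the paper's six orderings down to three, are tidier presentations of the same argument.
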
 
\begin{proof} The lattice $\C_n$ is distributive if and only if $\max(x, \min(y,z)) = \min(\max(x,y), \max(x,z))$ for all $x,y,z \in \mathbb{Z}$, which can be easily verified by separately considering the cases $x \le y \le z$, $x \le z \le y$, $y \le x \le z$, $y \le z \le x$, $z \le x \le y$, and $z \le y \le x$.
\end{proof}

\section{Rank formulae for $\C_n$}\label{sec:invrank}

In this section, we study rank properties of Latin squares and corner-sum hypermatrices under the Bruhat order, in analogy with the 2-dimensional case. 

First, note that unlike its 2-dimensional analogue, the poset of Latin squares under the Bruhat order is not graded in general. For example, the following are both saturated chains in $\L_n$ between the same pair of elements.

    $$\begin{array}{|c|c|c|c|}
\hline
1	&	2	&	3	&	4\\
\hline
2	&	1	&	4  &	3\\
\hline
3	&	4	&	1	&	2\\
\hline
4	&	3	&	2	&	1\\
\hline
\end{array} \,\cover \,\begin{array}{|c|c|c|c|}
\hline
2	&	1	&	3	&	4\\
\hline
1	&	2	&	4  &	3\\
\hline
3	&	4	&	1	&	2\\
\hline
4	&	3	&	2	&	1\\
\hline
\end{array}\, \cover \, \begin{array}{|c|c|c|c|}
\hline
2	&	1	&	4	&	3\\
\hline
1	&	2	&	3  &	4\\
\hline
3	&	4	&	1	&	2\\
\hline
4	&	3	&	2	&	1\\
\hline
\end{array} \,\cover\, \begin{array}{|c|c|c|c|}
\hline
3	&	1	&	4	&	2\\
\hline
1	&	2	&	3  &	4\\
\hline
2	&	4	&	1	&	3\\
\hline
4	&	3	&	2	&	1\\
\hline
\end{array}$$ 
and
$$\begin{array}{|c|c|c|c|}
\hline
1	&	2	&	3	&	4\\
\hline
2	&	1	&	4  &	3\\
\hline
3	&	4	&	1	&	2\\
\hline
4	&	3	&	2	&	1\\
\hline
\end{array} \,\cover\, \begin{array}{|c|c|c|c|}
\hline
1	&	2	&	3	&	4\\
\hline
3	&	1	&	4  &	2\\
\hline
2	&	4	&	1	&	3\\
\hline
4	&	3	&	2	&	1\\
\hline
\end{array} \,\cover\, \begin{array}{|c|c|c|c|}
\hline
3	&	1	&	4	&	2\\
\hline
1	&	2	&	3  &	4\\
\hline
2	&	4	&	1	&	3\\
\hline
4	&	3	&	2	&	1\\
\hline
\end{array}.$$

However, Corollary~\ref{cor:FDL} implies that the corner-sum lattice $\mathcal C_n$ is  graded. This allows us to study the notions of inversions and rank in analogy with the 2-dimensional case.   
   
Recall that the rank  in $S_n$ of a permutation under the Bruhat order is equal to its number of inversions. Analogous invariants (weighted inversions \cite{weighted inversions}) provide the rank $r(A)$ of an ASM $A$ in the $n \times n$ ASM lattice, which can be inferred from the sum $\rho(A)$ of the entries in the corner-sum matrix of $A$ \cite{bruhat T}.
\[r(A) = \rho(I_n) - \rho(A) = \frac{n(n+1)(2n+1)}{6} - \rho(A)\]

Similarly, for $L \in \Xi^{-1}(\C_n)$, we define $\rho(L)$ to be the sum of the entries of the corresponding corner-sum hypermatrix $\Xi(L)$:
\[\rho(L) = \sum_{0 \leq i,j,k \leq n} \Xi(L)_{ijk}.\]
 Each cell $(i,j)$ of $L$ is a formal sum $m_1+m_2+\dots+m_l$ of elements of $\pm[n]$. Denote by $L_{ij}$ the value of the sum $m_1+m_2+\dots+m_l$. For each $k\in[l]$, there is a contribution of $+1$ if $m_k>0$ or $-1$ if $m_k<0$ to $\Xi(L)_{abc}$ for all $a \geq i, b \geq j, c \geq |m_k|$. Overall, $L_{ij}$ contributes a total of $(n-i+1)(n-j+1)(n-L_{ij}+1)$ to $\rho(L)$, which implies the following.
 \begin{equation}\label{eq:rhoL}
\rho(L) = \sum_{1 \leq i,j \leq n} (n-i+1)(n-j+1)(n-L_{ij}+1)\end{equation}
The proof of Theorem~\ref{cornersum_implies_bruhat_entrywise} implies that each T-block added to $L$ decreases $\rho(L)$ by exactly 1. Therefore the rank of a Latin square $L$ in the  corner-sum Latin lattice is $r(L) = \rho(\Xi^{-1}(M_n)) - \rho(L)$, where $M_n$ is the minimal element of the lattice $\mathcal{C}_n$, characterised in the following.

\begin{lemma}\label{minimal_element}
    Let $M_n$ be the corner-sum hypermatrix whose entries are defined by
    \begin{equation}\label{eq:Mn}
    (M_n)_{i,j,k} = \min\big(k\min(i,j),\: ij - (n-k)\max(0, i+j-n)\big).\end{equation} Then $M_n$ is the (unique) minimum element of $\mathcal{C}_n$.
\end{lemma}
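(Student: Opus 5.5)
The plan is to show two things: $M_n$ dominates every element of $\C_n$ entrywise, and $M_n$ itself lies in $\C_n$. Recall that $A\preceq_B B$ corresponds to $\Xi(A)\ge\Xi(B)$. So the Bruhat-minimum of $\C_n$ is exactly the entrywise-maximum corner-sum hypermatrix. Once both facts are established, uniqueness is automatic: any other minimal element would also have to dominate $M_n$ entrywise, and hence equal it by antisymmetry.

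\emph{Step 1 (upper bound).} Fix $C\in\C_n$ and $i,j\in[0,n]$, and look along the vertical line in the $k$ direction. Write $a=\min(i,j)$ and $c=\max(0,i+j-n)$. By Definition~\ref{corner-sum def} we have $C_{i,j,0}=0$, and each increment $C_{i,j,m}-C_{i,j,m-1}$ is at most $a$. Telescoping upward from $m=0$ gives $C_{i,j,k}\le ka$.

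On the other hand $C_{i,j,n}=ij$, and each increment is at least $c$. Telescoping downward from $m=n$ gives $C_{i,j,k}\le ij-(n-k)c$. Together these show $C_{i,j,k}\le (M_n)_{i,j,k}$.

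\emph{Step 2 (boundary values).} For $k=0$ we need $\min(0,\,ij-nc)=0$, which amounts to $nc\le ij$. This is trivial when $c=0$; otherwise it reads $(n-i)(n-j)\ge0$. For $k=n$ the value is $\min(na,ij)=ij$. For $i=0$ or $j=0$ all terms vanish. For $i=n$ we have $a=j$ and $c=j$, so the value is $\min(kj,\,nj-(n-k)j)=kj$, as required; $j=n$ is symmetric.

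\emph{Step 3 (increments in the $k$ direction).} As a function of $k$, $(M_n)_{i,j,k}=\min(ak,\,ij-(n-k)c)$ is the minimum of two affine functions with slopes $a\ge c$. Its consecutive differences therefore lie in $[c,a]$. This is exactly the required range $\{\max(0,i+j-n),\dots,\min(i,j)\}$.

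\emph{Step 4 (increments in the $i$ direction; $j$ then follows by the symmetry $i\leftrightarrow j$ of the formula).} This is the step I expect to be the main obstacle, because the formula is not visibly symmetric in $i$ and $k$. Fix $j,k$ and regard $f(i)=\min(g(i),h(i))$ with $g(i)=k\min(i,j)$ and $h(i)=ij-(n-k)\max(0,i+j-n)$. Both are concave piecewise-linear in $i$:
\begin{itemize}
\item $g$ has slope $k$ for $i\le j$ and slope $0$ afterwards;
\item $h$ has slope $j$ for $i\le n-j$ and slope $j+k-n$ afterwards.
\end{itemize}
We need every increment of $f$ to lie in $[\max(0,j+k-n),\,\min(j,k)]$.

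The lower bound is easy: the minimum of concave functions is concave, and the final slopes of $g$ and $h$ are $0$ and $j+k-n$. So each increment of $f$ is at least the smaller terminal slope of whichever piece is active, and on the last segment both are at least $\max(0,j+k-n)$.

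For the upper bound I would first try to prove the cleaner symmetric identity
\[
(M_n)_{i,j,k}=\min\bigl(k\min(i,j),\;i\min(j,k),\;j\min(i,k),\;ij+jk+ki-n(i+j+k)+n^2\ \text{(when positive)},\dots\bigr),
\]
that is, to verify by comparing breakpoints that $M_n$ is invariant under permuting $i,j,k$. If that works, Step 3 settles all three directions at once. If it fails, I would fall back on a direct case split on the relative positions of the breakpoints $i=j$ and $i=n-j$, together with where $g$ and $h$ cross. In each region $f$ coincides with a single affine piece of slope in $\{k,0,j,j+k-n\}$, and one checks that the active slope is at most $\min(j,k)$. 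The key point is that where the slope-$k$ piece is active but $k>j$, $h$ (slope $\le j$) is already smaller; and where slope $j>k$ would be active, $g$ is already smaller, because $g(0)=h(0)=0$.
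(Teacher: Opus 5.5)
Your proposal is correct and is essentially the paper's argument. Telescoping along the $k$-direction gives entrywise domination, and invariance of $M_n$ under permuting $(i,j,k)$ transfers the $k$-direction increment check to the other two directions; you are also more explicit than the paper about the boundary values and increments, which it leaves implicit. Your guessed symmetric form closes Step~4 in one line, with no case split or fallback needed, and more cleanly than the paper's four-case table. Indeed $k\min(i,j)=\min(ik,jk)$ and, since $n-k\ge 0$, $ij-(n-k)\max(0,i+j-n)=\min\bigl(ij,\,Q\bigr)$ with $Q=ij+jk+ki-n(i+j+k)+n^2$. Hence $(M_n)_{i,j,k}=\min(ij,jk,ki,Q)$ is visibly symmetric, and since $nQ=(n-i)(n-j)(n-k)+ijk\ge 0$, the ``when positive'' caveat is superfluous.
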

\begin{proof}

    By definition, the maximum possible increase in position $(i,j)$ of $M_n$ from plane $k-1$ to plane $k$ is $\min(i,j)$. If this increase occurs between every plane from plane 0 to plane $k$, then the $(i,j,k)$-entry of $M_n$ is equal to $k\min(i,j)$. Again by definition, there is also a minimum increase from plane $k$ to $k+1$, plane $k+1$ to $k+2$, $\dots$, and plane $n-1$ to $n$. Since this minimum is $\max(0, i+j-n)$ and the $(i,j,n)$-entry is equal to $ij$, it follows that the $(i,j,k)$-entry of $M_n$ must be at least $(n-k)\max(0,i+j-n)$ less than $ij$. Therefore $M_n$ achieves the maximum possible increase from each entry to the next in the $k$-direction. So each entry of $M_n$ is the maximum possible, subject to the constraints in the $k$-direction.
    
    Since the $(j,i,k)$-entry of $M_n$ is clearly equal to the $(i,j,k)$-entry, we need only show this is equal to the $(i,k,j)$-entry in order to prove each entry of $M_n$ is the maximum possible, subject to the constraints in all directions.

    The $(i,j,k)$-entry of $M_n$ depends on the relationship between $i$ and $j$ as follows.
    \begin{equation}\label{eq:minentries}
    \begin{array}{c|c|c|}
        & i\le n-j& i>n-j \\
    \hline
     i \le j   &i\min(j,k) & \min\big(ik,n^2-ni-nj-nk+ij+ik+jk\big) \\
     \hline
     i > j   &j\min(i,k) &\min\big(jk,n^2-ni-nj-nk+ij+ik+jk\big) \\
         \hline
    \end{array}\end{equation}
    To determine the value of $\min\big(ik,n^2-ni-nj-nk+ij+ik+jk\big)$, we consider when $n^2-ni-nj-nk+ij+ik+jk \ge ik$. This implies $n^2-ni-nj-nk+ij+jk \ge 0$.

    $n^2-ni-nj-nk+ij+jk = j(i+k-n) - n(i+k-n) \ge 0$ exactly when $i+k-n \le 0$. Therefore $(M_n)_{i,j,k} = ik$ for $i \le j$, $i > n-j$, and $i \le n-k$. Similarly, $(M_n)_{i,j,k} = jk$ for $i > j$, $i > n-j$, and $j \le n-k$.

    Otherwise, if $i > n-j$, it follows that $(M_n)_{i,j,k} = n^2-ni-nj-nk+ij+ik+jk$. So this occurs for $i \le j$, $i > n-j$, and $i > n-k$, or for $i > j$, $i > n-j$, and $j > n-k$.

    From the table, we see that $(M_n)_{i,j,k} = ij$ for $i \le j$, $i \le n-j$, and $j \le k$, and also for $i > j$, $i \le n-j$, and $i \le k$. Similarly, $(M_n)_{i,j,k} = ik$ for $i \le j$, $i \le n-j$, and $j > k$, and $(M_n)_{i,j,k} = jk$ for $i > j$, $i \le n-j$, and $i > k$.

    Combining cases with the same value gives the following, which is clearly equal to $(M_n)_{i,k,j}$.
    \[ (M_n)_{i,j,k} = \begin{cases} 
      ij, & i\le\min(k,n-j)\text{ and } j \le k \\
      ik, & i\le\min(j,n-k)\text{ and } j > k \\
      jk, & i>\max(j,k)\text{ and } j+k \le n \\
      n^2-ni-nj-nk+ij+ik+jk, & i>\max(n-j,n-k)\text{ and } j+k > n \\
   \end{cases}\]
\end{proof}

 By a similar argument, it can easily be shown that the $(i,j,k)$-entry of the maximum element of $\Ccal_n$ is \begin{equation}\label{eq:max}\max\big(k\max(0,i+j-n), ij-(n-k)\min(i,j)\big).\end{equation}

\begin{lemma}\label{minimal_rank} For $n \in \mathbb{N}$, the sum of the entries in the minimal element $M_n \in \C_n$ is given by the following.
\[\rho(\Xi^{-1}(M_n)) = \frac{69n^{5} + 180n^{4} + 170n^{3} + 60n^{2} + (4^{1+(-1)^n})n}{480} \]
\end{lemma}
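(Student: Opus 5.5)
The plan is to sum the closed form \eqref{eq:Mn} directly. For fixed $(i,j)$ the inner sum over $k$ is a piecewise-linear sum, and the resulting sum over $(i,j)$ splits into polynomial sums over a few triangular regions of $[0,n]^2$. Entries with some index equal to $0$ contribute nothing, so we may take $i,j,k\in[n]$.

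Write $m=\min(i,j)$ and $M=\max(0,i+j-n)$. Then $(M_n)_{i,j,k}=\min(km,\,ij-(n-k)M)$, and as a function of $k$ this is the minimum of two lines, with slopes $m$ and $M$ and $m\ge M$.

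\begin{itemize}
\item If $i+j\le n$, then $M=0$. The two lines meet at $k^*=ij/m=\max(i,j)$, so
\[\sum_{k=0}^n (M_n)_{i,j,k}=m\binom{\max(i,j)+1}{2}+ij\,\bigl(n-\max(i,j)\bigr).\]
\item If $i+j>n$, assume by symmetry $i\le j$. Then $m-M=n-j$ and $ij-nM=(n-i)(n-j)$, so the lines meet at the integer $k^*=n-i$ (when $j<n$; if $j=n$ the two lines coincide). Hence
\[\sum_{k=0}^n (M_n)_{i,j,k}=i\binom{n-i+1}{2}+\sum_{k=n-i+1}^{n}\bigl(ij-(n-k)(i+j-n)\bigr),\]
which is an explicit polynomial in $i,j,n$.
\end{itemize}
This agrees with the case table in the proof of Lemma~\ref{minimal_element}, which can be used as a check.

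Next, sum these expressions over $(i,j)\in[n]^2$. I would split into the four regions cut out by the lines $i=j$ and $i+j=n$, and use the symmetry $(i,j)\leftrightarrow(j,i)$ to reduce to $i\le j$, halving the off-diagonal terms and treating the diagonal $i=j$ separately. Each region is a lattice triangle or trapezoid whose vertices involve $n/2$. Summing a polynomial over such a region yields a polynomial in $n$ when $n$ is even and a different one when $n$ is odd, with leading terms that agree. This is where the parity term $4^{1+(-1)^n}n/480$ (that is, $16n/480$ for even $n$ and $n/480$ for odd $n$) comes from.

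The main obstacle is the bookkeeping near the point where the diagonal $i=j$ meets the antidiagonal $i+j=n$, since the region boundaries involve $\lfloor n/2\rfloor$. To keep this manageable and to guard against arithmetic slips, I would use a quasi-polynomial argument. By the above, $\rho(\Xi^{-1}(M_n))$ is a sum of a fixed polynomial in $(i,j,n)$ over lattice points of polygons with vertices in $\tfrac12\mathbb{Z}\cdot n$. It is therefore a quasi-polynomial in $n$ of degree at most $5$ and period dividing $2$. Two degree-$5$ polynomials are determined by their values at six even and six odd values of $n$. So it suffices to evaluate the sum directly for $n=1,\dots,12$ (for instance $n=1$ gives $\rho=1$, matching $(69+180+170+60+1)/480=1$), or to carry out the region sums symbolically once for each parity. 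Comparing with the stated formula then completes the proof.
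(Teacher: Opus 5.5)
Your proposal is correct, but it organises the computation differently from the paper. The paper fixes $k$ and sets $m_k(n)=\sum_{i,j}(M_n)_{i,j,k}$, using the four-case description of $(M_n)_{i,j,k}$ from the proof of Lemma~\ref{minimal_element}. It then carries out a long symbolic evaluation with triangular numbers, sums of squares and fourth powers, and separates the parities only at the end.

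You instead fix the cell $(i,j)$ and sum over $k$ first. This uses the fact that $k\mapsto(M_n)_{i,j,k}$ is the minimum of two lines meeting at the integer $\max(i,j)$ or $n-\min(i,j)$. The payoff is that only two regions remain for $i\le j$, and the cell sums are cubic. In fact they simplify to $\tfrac12 ij(2n+1-j)$ when $i+j\le n$ and to $\tfrac12 i\bigl(n(n-i)+(i+1)j\bigr)$ when $i+j>n$. For $n=4$ these are exactly the entries of the plane-sum matrix of $M_4$ displayed in Section~\ref{sec:DM}, whose total is $268$, as the formula predicts.

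With these cell formulas even the fully symbolic route is short. Summing over $j$, then over $i\le\lfloor n/2\rfloor$ and $i>\lfloor n/2\rfloor$, reproduces both of the paper's parity polynomials.

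Your interpolation shortcut replaces that algebra with a finite check, which you still have to carry out for $n=1,\dots,12$. It also needs the quasi-polynomiality claim justified more precisely than ``polygons with vertices in $\tfrac12\mathbb{Z}\cdot n$''. The regions are half-open and have constant offsets, so they are not dilates of a fixed polygon. The clean justification is iterated Faulhaber summation with bounds linear in $i$, $n$ and $\lfloor n/2\rfloor$. You should also note that every inner range is nonempty for all $n\ge1$, so both constituent polynomials are valid from $n=1$ onward and can be interpolated there.

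The paper's approach needs no interpolation lemma, but pays for it with heavy $\min/\max$ bookkeeping that is easy to get wrong.
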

\begin{proof}
    Let $m(n) = \rho(\Xi^{-1}(M_n)) = \sum_{0\leq i,j,k \le n} (M_n)_{i,j,k}$. Lemma \ref{minimal_element} implies
    \[m(n) = \sum_{k=0}^n\sum_{j=0}^n\sum_{i=0}^n\min\big(k\min(i,j),\: ij - (n-k)\max(0, i+j-n)\big).\]
    Fix $k$ and let $m_k(n) = \sum_{0\leq i,j \le n} (M_n)_{i,j,k}$, meaning $m(n) = \sum_{0\le k \le n} m_k(n)$. The expression for $(M_n)_{i,j,k}$ at the end of the proof of Lemma \ref{minimal_element} gives the following expression for $m_k(n)$.
    \[\begin{aligned}m_k(n) = &\sum_{j=0}^k \sum_{i=0}^{\min(k,n-j)} ij + \sum_{j=k+1}^n \sum_{i=0}^{\min(j,n-k)} ik + \sum_{j=0}^{n-k} \sum_{i=\max(j,k)+1}^{n} jk \\ &+ \sum_{j=n-k+1}^n \sum_{i=\max(n-j,n-k)+1}^{n} n^2-ni-nj-nk+ij+ik+jk\end{aligned}\]

     By shifting the indices (replacing $j$ with $n-j$ and $k$ with $n-k$), the fourth sum becomes
    \[\sum_{j=0}^{n-k-1} \sum_{i=\max(j,k)+1}^{n} ni - ij - ik + jk = \sum_{j=0}^{n-k-1} (n-j-k)\left(\sum_{i=\max(j,k)+1}^{n} i\right) + \sum_{j=0}^{n-k-1} \sum_{i=\max(j,k)+1}^{n} jk.\]
    
    There are now two sums involving $jk$ terms. Combine them in the expression for $m_k(n)$.
    \[\begin{aligned}m_k(n) = &\sum_{j=0}^k j \left(\sum_{i=0}^{\min(k,n-j)} i\right) +k \sum_{j=k+1}^n \sum_{i=0}^{\min(j,n-k)} i + 2\sum_{j=0}^{n-k-1} \sum_{i=\max(j,k)+1}^{n} jk \\ &+ k(n-k)(n-\max(k,n-k)) + \sum_{j=0}^{n-k-1} (n-j-k)\left(\sum_{i=\max(j,k)+1}^{n} i\right)\end{aligned}\]
    
    Let $T(n)$ be the $n$th triangular number, so $T(n)  = \sum_{i=0}^n i= \frac{n(n+1)}{2}$.
    \[\begin{aligned}m_k(n) = &\sum_{j=0}^k j T(\min(k,n-j)) +k \sum_{j=k+1}^n T(\min(j,n-k)) + 2k\sum_{j=0}^{n-k-1} (n-max(j,k))j \\ &+ k(n-k)\min(k,n-k) + \sum_{j=0}^{n-k-1} (n-j-k)\left(T(n)-T(\max(j,k))\right)\end{aligned}\]

    Again, split each sum into cases to evaluate the $\min$ and $\max$ functions.
    \begin{align*}m_k(n) = & \sum_{j=0}^{\min(k,n-k)} jT(k) + \sum_{j=\min(k,n-k)+1}^k jT(n-j)\\
    &+ \sum_{j=k+1}^{\max(k,n-k)} kT(j) + \sum_{j=\max(k,n-k)+1}^n kT(n-k)\\
    &+ \sum_{j=0}^{\min(k,n-k-1)} 2k(n-k)j + \sum_{j=\min(k+1,n-k)}^{n-k-1} 2k(n-j)j \\
    &+k(n-k)\min(k,n-k) + \sum_{j=0}^{n-k-1} (n-j-k)T(n)\\
    &- \sum_{j=0}^{\min(k,n-k-1)} (n-j-k)T(k) - \sum_{j=\min(k+1,n-k)}^{n-k-1} (n-j-k)T(j)\\
    = & T(\min(k,n-k))T(k) + \sum_{j=\min(k,n-k)+1}^k \frac{1}{2}\big((n^2+n)j - (2n+1)j^2 + j^3\big)\\
    &+ \frac{k}{2}\left(\sum_{j=k+1}^{\max(k,n-k)} j^2+j\right)  + k(n-\max(k,n-k))T(n-k)\\
    &+ 2k(n-k)T(\min(k,n-k-1)) + 2k\left(\sum_{j=\min(k+1,n-k)}^{n-k-1} nj-j^2\right) \\
    &+k(n-k)\max(k,n-k) + T(n)\big((n-k)^2 - T(n-k-1)\big)\\ 
    & -(n-k)\min(k+1,n-k)T(k) + T(\min(k,n-k-1))T(k)\\
    & - \frac{1}{2}\sum_{j=\min(k+1,n-k)}^{n-k-1} (n-k)j + (n-k-1)j^2 - j^3
    % &+ (n-k)(T(n)-T(k))\left(\sum_{j=0}^{\min(k,n-k-1)} 1\right) - (T(n)-T(k))\left(\sum_{j=0}^{\min(k,n-k-1)} j\right)\\
    % &+ \sum_{j=\min(k+1,n-k)}^{n-k-1} (n-k)T(n) - \frac{2T(n)+n-k}{2}j - \frac{n-k-1}{2}j^2 + \frac{1}{2}j^3 
    \end{align*}

    Let $S(n) = \sum_{j=1}^n j^2 = \frac{n(n+1)(2n+1)}{6}$, and note that $\sum_{j=1}^n j^3 = T(n)^2$.
    \[\begin{aligned}m_k(n) = & T(\min(k,n-k))T(k) + \frac{n^2+n}{2}(T(k) - T(\min(k,n-k)))\\
    &- \frac{2n+1}{2}(S(k) - S(\min(k,n-k))) + \frac{1}{2}(T(k)^2 - T(\min(k,n-k))^2)\\
    &+ \frac{k}{2}\big(S(\max(k,n-k)) - S(k) + T(\max(k,n-k)) - T(k)\big)\\
    &+ k\min(k,n-k)T(n-k) + 2knT(n-k-1) \\
    &+ 2k(n-k)T(\min(k,n-k-1)) - 2knT(\min(k,n-k-1))\big)\\
    &- 2k\big(S(n-k-1) - S(\min(k,n-k-1))\big) +k(n-k)\min(k,n-k)\\
    & + T(n)\big((n-k)^2 - T(n-k-1)\big)  -(n-k)\min(k+1,n-k)T(k)\\
    & + T(\min(k,n-k-1))T(k)  -\frac{n-k}{2}\big(T(n-k-1) - T(\min(k,n-k-1))\big)\\
    & -\frac{n-k-1}{2}\big(S(n-k-1) - S(\min(k,n-k-1))\big)\\
    & +\frac{1}{2}\big(T(n-k-1)^2 - T(\min(k,n-k-1))^2\big) \end{aligned}\]

    We now calculate $m(n)$, splitting the sum where necessary to calculate the $\max$ and $\min$ functions, since $\min(k,n-k) = \min(k-1,n-k) = n-k$ when $k > \lfloor\frac{n}{2}\rfloor$ and $\min(k+1,n-k) = n-k$ when $k > \lfloor\frac{n-1}{2}\rfloor$. Note also that $n-k$ can be swapped for $k$ in any term in the sum (since $0 \le k \le n$).

    \begin{align*}
    m(n) &= \sum_{k=0}^{n} T(n)\big(k^2 - T(k-1)\big) + 2n(n-k)T(k-1)\\
    &+ \sum_{k=0}^{\lfloor\frac{n}{2}\rfloor} T(k)^2 + k^2T(n-k) + k^2(n-k) + \frac{k}{2}\big(S(n-k) - S(k) + T(n-k) - T(k)\big)\\
    & + 2k(n-k)T(k-1) -2n(n-k)T(k-1) + T(k-1)T(n-k)\\
    &+ \sum_{k=\lfloor\frac{n+2}{2}\rfloor}^{n} T(n-k)T(k) + k(n-k)T(n-k) + k(n-k)^2 + \frac{n^2+n}{2}\big(T(k) - T(n-k)\big)\\
    &- \frac{2n+1}{2}(S(k) - S(n-k)) + \frac{1}{2}(T(k)^2 - T(n-k)^2) + T(n-k)^2\\
    & + 2k(n-k)T(n-k) -2n(n-k)T(n-k) - \left(2(n-k)+\frac{k-1}{2}\right)\big(S(k-1) - S(n-k)\big) \\
    &-\frac{k}{2}\big(T(k-1) - T(n-k)\big) +\frac{1}{2}\big(T(k-1)^2 - T(n-k)^2\big)\\
    &+ \sum_{k=0}^{\lfloor\frac{n-1}{2}\rfloor} -(n-k)(k+1)T(k) + \sum_{k=\lfloor\frac{n+1}{2}\rfloor}^{n} -(n-k)^2T(k)\\
    \end{align*}

    Multiplying, rearranging, and factorising gives the following.
    \[\begin{aligned}
    m(n) &= \sum_{k=0}^{n} nk^3 + \left(-\frac{7}{4}n^2+\frac{5}{4}n\right)k^2 + \left(n^3 - \frac{3}{4}n^2 + \frac{1}{4}n\right)k\\
    &+ \sum_{k=0}^{\lfloor\frac{n}{2}\rfloor} \left(-\frac{1}{3}\right)k^4 + \left(n-\frac{1}{2}\right)k^3 + \left(-\frac{3}{4}n^2 - \frac{3}{4}n - \frac{1}{6}\right)k^2 + \left(\frac{1}{6}n^3+\frac{5}{4}n^2+\frac{1}{12}n\right)k\\
    %&+ \sum_{k=0}^{\lfloor\frac{n}{2}\rfloor} \left(-\frac{1}{3}\right)k^4 + \left(n{\color{red}+\frac{3}{2}}\right)k^3 + \left(-\frac{3}{4}n^2 - \frac{{\color{red}15}}{4}n - \frac{1}{6}\right)k^2 + \left(\frac{1}{6}n^3+\frac{{\color{red}9}}{4}n^2+\frac{1}{12}n\right)k\\
    &+ \sum_{k=\lfloor\frac{n+2}{2}\rfloor}^{n} \left(\frac{3}{2}n+1\right)k^3 + \left(-\frac{11}{4}n^2-\frac{13}{4}n\right)k^2 + \left(\frac{3}{2}n^3+\frac{9}{4}n^2-\frac{1}{4}n\right)k + \left(-\frac{1}{4}n^4+\frac{1}{4}n^2\right)\\
    %&+\sum_{k=\lfloor\frac{n+2}{2}\rfloor}^{n} \left(\frac{3}{2}n{\color{red}-}1\right)k^3 + \left(-\frac{11}{4}n^2-\frac{{\color{red}1}}{4}n\right)k^2 + \left(\frac{3}{2}n^3+\frac{{\color{red}5}}{4}n^2-\frac{1}{4}n\right)k + \left(-\frac{1}{4}n^4+\frac{1}{4}n^2\right)\\
    &+ \sum_{k=0}^{\lfloor\frac{n-1}{2}\rfloor} \frac{1}{2}k^4 + \left(-\frac{1}{2}n+1\right)k^3 + \left(-n+\frac{1}{2}\right)k^2 + \left(-\frac{1}{2}n\right)k\\
    &+ \sum_{k=\lfloor\frac{n+1}{2}\rfloor}^{n} -\frac{1}{2}k^4 + \left(n-\frac{1}{2}\right)k^3 + \left(-\frac{1}{2}n^2+n\right)k^2 + \left(-\frac{1}{2}n^2\right)k
    \end{aligned}\]

    Let $F(n) = \sum_{k=1}^nk^4 = \frac{n(n+1)(2n+1)(3n^2+3n-1)}{30}$.
    \[\begin{aligned}
    m(n) &= nT(n)^2 + \left(-\frac{7}{4}n^2+\frac{5}{4}n\right)S(n) + \left(n^3 - \frac{3}{4}n^2 + \frac{1}{4}n\right)T(n)\\
    & -\frac{1}{3}F\left(\left\lfloor\frac{n}{2}\right\rfloor\right) + \left(n-\frac{1}{2}\right)T\left(\left\lfloor\frac{n}{2}\right\rfloor\right)^2 + \left(-\frac{3}{4}n^2 - \frac{3}{4}n - \frac{1}{6}\right)S\left(\left\lfloor\frac{n}{2}\right\rfloor\right)\\
    &+ \left(\frac{1}{6}n^3+\frac{5}{4}n^2+\frac{1}{12}n\right)T\left(\left\lfloor\frac{n}{2}\right\rfloor\right)\\
    &+ \left(\frac{3}{2}n+1\right)\left(T(n)^2 - T\left(\left\lfloor\frac{n}{2}\right\rfloor\right)^2\right) + \left(-\frac{11}{4}n^2-\frac{13}{4}n\right)\left(S(n) - S\left(\left\lfloor\frac{n}{2}\right\rfloor\right)\right)\\
    & + \left(\frac{3}{2}n^3+\frac{9}{4}n^2-\frac{1}{4}n\right)\left(T(n) - T\left(\left\lfloor\frac{n}{2}\right\rfloor\right)\right) + \left(-\frac{1}{4}n^4+\frac{1}{4}n^2\right)\left(n - \left\lfloor\frac{n}{2}\right\rfloor\right)\\
    &+ \frac{1}{2}F\left(\left\lfloor\frac{n-1}{2}\right\rfloor\right) + \left(-\frac{1}{2}n+1\right)T\left(\left\lfloor\frac{n-1}{2}\right\rfloor\right)^2 + \left(-n+\frac{1}{2}\right)S\left(\left\lfloor\frac{n-1}{2}\right\rfloor\right)\\
    &+ \left(-\frac{1}{2}n\right)T\left(\left\lfloor\frac{n-1}{2}\right\rfloor\right)\\
    & -\frac{1}{2}\left(F(n) - F\left(\left\lfloor\frac{n-1}{2}\right\rfloor\right)\right) + \left(n-\frac{1}{2}\right)\left(T(n)^2 - T\left(\left\lfloor\frac{n-1}{2}\right\rfloor\right)^2\right) \\
    & + \left(-\frac{1}{2}n^2+n\right)\left(S(n) - S\left(\left\lfloor\frac{n-1}{2}\right\rfloor\right)\right) + \left(-\frac{1}{2}n^2\right)\left(T(n) - T\left(\left\lfloor\frac{n-1}{2}\right\rfloor\right)\right)
    \end{aligned}\]

    For odd $n$, $\left\lfloor\frac{n}{2}\right\rfloor = \left\lfloor\frac{n-1}{2}\right\rfloor = \frac{n-1}{2}$ and $\left\lfloor\frac{n+1}{2}\right\rfloor = \frac{n+1}{2}$. For even $n$, $\left\lfloor\frac{n}{2}\right\rfloor = \left\lfloor\frac{n+1}{2}\right\rfloor = \frac{n}{2}$ and $\left\lfloor\frac{n-1}{2}\right\rfloor = \frac{n-2}{2}$. Substituting these in and evaluating separately gives the following.

    \[ m(n) = \begin{cases} 
          \frac{23}{160}n^{5} + \frac{3}{8}n^{4} + \frac{17}{48}n^{3} + \frac{1}{8}n^{2} + \frac{1}{480}n, & n\text{ odd} \\
          \frac{23}{160}n^{5} + \frac{3}{8}n^{4} + \frac{17}{48}n^{3} + \frac{1}{8}n^{2} + \frac{1}{30}n, & n\text{ even}
       \end{cases}\]

    These cases can be combined into one formula as follows.
    \[m(n) =  \frac{23}{160}n^{5} + \frac{3}{8}n^{4} + \frac{17}{48}n^{3} + \frac{1}{8}n^{2} + \frac{1}{64}\left(\frac{17}{15} + (-1)^n\right)n\]
    Alternatively:
    \[m(n) = \frac{138n^{5} + 360n^{4} + 340n^{3} + 120n^{2} + \left(17 + (-1)^n15\right)n}{960} \]
    Since $17+15(-1)^n = 2^{3+2(-1)^n}$:
    \[m(n) = \frac{69n^{5} + 180n^{4} + 170n^{3} + 60n^{2} + 4^{1+(-1)^n}n}{480} \]

\end{proof}

We summarise the results of Theorem \ref{cornersum_implies_bruhat_entrywise} and Lemma \ref{minimal_rank} in the following theorem.
\begin{theorem}\label{rank_corner} Let $L$ be an $n \times n$  array such that $\Xi(L) \in \mathcal{C}_n$. Then the rank $r(L)$ of $L$  in $\mathcal{C}_n$ is
\[r(L) = \frac{69n^{5} + 180n^{4} + 170n^{3} + 60n^{2} + (4^{1+(-1)^n})n}{480}   - \sum_{1 \leq i,j \leq n} (n-i+1)(n-j+1)(n-L_{ij}+1),\]
 where $L_{ij}$ is equal to the sum of the entries in cell $(i,j)$ of $L$.
\end{theorem}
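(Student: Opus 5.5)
The plan is to assemble the formula from three ingredients already established: the gradedness of $\C_n$ (Corollary~\ref{cor:FDL}), the linear effect of a contiguous T-block on corner-sums (proof of Theorem~\ref{cornersum_implies_bruhat_entrywise}), and the explicit value of $\rho$ at the minimum (Lemmas~\ref{minimal_element} and~\ref{minimal_rank}). Since $\C_n$ is a finite distributive lattice it is graded, and the rank of an element equals the length of any saturated chain from the minimum $M_n$ to it. So it suffices to show that the rank of $L$ equals $\rho(\Xi^{-1}(M_n))-\rho(L)$ and then substitute the two closed forms.

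First I will check that every covering relation in $\C_n$ lowers $\rho$ by exactly $1$. By the remark following Theorem~\ref{thm:CSlattice}, $X\cover_B Y$ in $\C_n$ holds exactly when the two differ by a single positive contiguous T-block. The computation in the proof of Theorem~\ref{cornersum_implies_bruhat_entrywise} shows that $\Xi$ of such a block has a single nonzero entry, equal to $1$, so the corner-sums differ in one entry by $1$. Hence along any saturated chain from $M_n$ up to $\Xi(L)$, the total sum of entries falls by one at each step. The chain length is therefore $\sum M_n-\sum\Xi(L)=\rho(\Xi^{-1}(M_n))-\rho(L)$. One subtle point needs care here: the order on $\C_n$ is reverse entrywise domination, so $M_n$, the entrywise maximum, is the bottom element, and ranks increase as corner-sum entries decrease. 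I will state this orientation explicitly.

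Second, I substitute the two expressions. Lemma~\ref{minimal_rank} gives the quintic for $\rho(\Xi^{-1}(M_n))$, and Equation~\eqref{eq:rhoL} gives $\rho(L)=\sum_{i,j}(n-i+1)(n-j+1)(n-L_{ij}+1)$. Equation~\eqref{eq:rhoL} was derived for an arbitrary array whose cells are formal signed sums, where $L_{ij}$ is the value of the cell sum. I will briefly re-justify this in the current generality: each term $\pm m$ in cell $(i,j)$ contributes $\pm1$ to $\Xi(L)_{a,b,c}$ for $a\ge i$, $b\ge j$, $c\ge m$. That gives $\pm(n-i+1)(n-j+1)(n-m+1)$ in total. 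Summing over the terms of a cell, and using that each cell has exactly one more positive term than negative, yields $(n-i+1)(n-j+1)(n+1-L_{ij})$. This uses the line-sum condition along vertical lines, which follows from the boundary conditions of Definition~\ref{corner-sum def}.

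The main obstacle is that last justification. Equation~\eqref{eq:rhoL} as written assumes that the signed count of terms in each cell is $1$, so I must confirm that every hypermatrix $\Xi^{-1}(C)$ with $C\in\C_n$ has vertical line sums $1$. I would read this off from $C_{i,j,n}-C_{i-1,j,n}-C_{i,j-1,n}+C_{i-1,j-1,n}=ij-(i-1)j-i(j-1)+(i-1)(j-1)=1$. Everything else is direct substitution.
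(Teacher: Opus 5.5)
Your proposal is correct and follows the paper's argument. Each covering step in $\C_n$ is a single positive contiguous T-block, which lowers $\rho$ by exactly $1$, so $r(L)=\rho(\Xi^{-1}(M_n))-\rho(L)$; you then substitute Lemma~\ref{minimal_rank} and Eq.~\eqref{eq:rhoL}, as the paper does. Your explicit use of the vertical line sums $\sum_k A_{ijk}=1$, to justify Eq.~\eqref{eq:rhoL} for arbitrary elements of $\Xi^{-1}(\C_n)$, spells out a step the paper leaves implicit.
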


 The following alternative formula for the rank of an ASM $A$ is given in \cite[Theorem~5.1]{directed graph}.
\begin{equation}\label{eq:ASM rank}
    r(A) = \frac{1}{2}\sum_{1\le i,j \le n} (i-j)^2A_{ij}
\end{equation}
 Clearly the identity $I_n$ satisfies $r(I_n)=0$ by the above formula, as that is the unique permutation matrix with $i=j$ in all positions with non-zero entries. The further from the main diagonal that a permutation matrix $P$ has a non-zero entry, the higher that entry's contribution is to the rank, and so this formula much more explicitly counts the length of a saturated chain in the ASM lattice from $I_n$ to $P$. We now adapt this formula to Latin squares  and $\Xi^{-1}(\C_n)$ more generally. 

\begin{lemma}
Let $L$ be an $n \times n$ array such that each row and column sums to $\frac{n(n+1)}{2}$. Then
\[\frac{1}{2}\sum_{1 \leq i,j \leq n} (i-j)^2(n-L_{ij}) = \frac{n^2(n+1)(n^2+n+1)}{6} - \sum_{1 \leq i,j \leq n} (n-i+1)(n-j+1)(n-L_{ij}+1).\]
\end{lemma}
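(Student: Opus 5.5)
The plan is to remove the $L_{ij}$-dependence by a separation-of-variables identity, so that the row and column sum hypothesis makes both sides collapse to constants. Write $x_{ij} = n - L_{ij}$. The hypothesis that every row and column of $L$ sums to $\frac{n(n+1)}{2}$ gives
\[\sum_{j=1}^n x_{ij} = \sum_{i=1}^n x_{ij} = n^2 - \tfrac{n(n+1)}{2} = \tfrac{n(n-1)}{2}\]
for every fixed $i$ (respectively $j$). Since $(n-L_{ij}+1) = x_{ij}+1$, the claimed identity is equivalent to
\[\sum_{1\le i,j\le n}\Big(\tfrac12(i-j)^2 + (n-i+1)(n-j+1)\Big)x_{ij} \;=\; \frac{n^2(n+1)(n^2+n+1)}{6} - \sum_{1\le i,j\le n}(n-i+1)(n-j+1).\]
This reformulation is the first step.

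The key observation is that the coefficient of $x_{ij}$ splits into a function of $i$ plus a function of $j$ plus a constant. Expanding gives $\tfrac12(i-j)^2 = \tfrac12 i^2 + \tfrac12 j^2 - ij$ and $(n+1-i)(n+1-j) = (n+1)^2 - (n+1)(i+j) + ij$. The $ij$ terms cancel, leaving
\[f(i) + f(j) + (n+1)^2, \qquad \text{where } f(t) = \tfrac12 t^2 - (n+1)t.\]
The left-hand side then becomes
\[\sum_i f(i)\sum_j x_{ij} + \sum_j f(j)\sum_i x_{ij} + (n+1)^2\sum_{i,j}x_{ij} = n(n-1)\sum_{t=1}^n f(t) + (n+1)^2\,\frac{n^2(n-1)}{2}.\]
This depends only on $n$, so the individual entries of $L$ no longer matter.

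It remains to check a polynomial identity in $n$. On the right-hand side, $\sum_{i,j}(n-i+1)(n-j+1) = T(n)^2 = \frac{n^2(n+1)^2}{4}$. On the left-hand side, $\sum_t f(t) = \tfrac12 S(n) - (n+1)T(n)$, with $S$ and $T$ as in the proof of Lemma~\ref{minimal_rank}. So the claim reduces to
\[n(n-1)\Big(\tfrac{n(n+1)(2n+1)}{12} - \tfrac{n(n+1)^2}{2}\Big) + \frac{n^2(n-1)(n+1)^2}{2} + \frac{n^2(n+1)^2}{4} = \frac{n^2(n+1)(n^2+n+1)}{6}.\]
I would divide through by $n^2(n+1)/12$ to get
\[(n-1)(2n+1) - 6(n-1)(n+1) + 6(n-1)(n+1) + 3(n+1) = 2n^2 + 2n + 2,\]
which holds since $(n-1)(2n+1) + 3(n+1) = 2n^2 + 2n + 2$.

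The only real obstacle is spotting the decomposition of the coefficient into $f(i)+f(j)+(n+1)^2$. After that, the argument is just this bookkeeping check that the constants agree.
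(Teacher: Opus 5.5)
Your proof is correct and follows essentially the same route as the paper. Both arguments add the two $L$-dependent sums and observe that the only term not controlled by the row and column sums, the cross term $ijL_{ij}$, cancels; the rest is then evaluated using the row and column sums of $L$. Your substitution $x_{ij}=n-L_{ij}$ and the splitting of the coefficient as $f(i)+f(j)+(n+1)^2$ package the paper's monomial expansion more cleanly, and your final polynomial check is right.
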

\begin{proof}
\begin{align*}
\sum_{1 \leq i,j \leq n} &(n-i+1)(n-j+1)(n-L_{ij}+1) + \frac{1}{2}\sum_{1 \leq i,j \leq n} (i-j)^2(n-L_{ij})\\
	&= \sum_{1 \leq i,j \leq n} \left(\big((n+1)-i\big)\big((n+1)-j\big)\big((n+1)-L_{ij}\big)+ \frac{1}{2}(i-j)^2(n-L_{ij})\right)\\
	&= \sum_{1 \leq i,j \leq n}\bigg( (n+1)^3 - (n+1)^2(i+j+L_{ij}) + (n+1)(ij + iL_{ij} + jL_{ij}) - ijL_{ij}\\
	&\phantom{= \sum_{1 \leq i,j \leq n}}+ n\cdot\frac{i^2+j^2}{2} - nij - \frac{1}{2} (i-j)^2L_{ij} + ijL_{ij}\bigg)\\
	&= (n+1)^3(n^2) - 3(n+1)^2\cdot n \cdot \frac{n(n+1)}{2} + 3(n+1)\left(\frac{n(n+1)}{2}\right)^2\\
	&\phantom{=}+n\cdot\frac{n(n+1)(2n+1)}{6} - n\left(\frac{n(n+1)}{2}\right)^2- \frac{n(n+1)(2n+1)}{6}\cdot\frac{n(n+1)}{2}\\
	&= \frac{n^2(n+1)(n^2+n+1)}{6}
\end{align*}
\end{proof}

Combining this result with Theorem \ref{rank_corner} gives another rank formula.
\begin{corollary}\label{rank_square} Let $L$ be an $n \times n$  array such that $\Xi(L) \in \mathcal{C}_n$. Then the rank $r(L)$ of $L$  in $\mathcal{C}_n$ is
$$r(L) = \frac{-11n^{5} + 20n^{4} + 10n^{3} - 20n^{2} + (4^{1+(-1)^n})n}{480} + \frac{1}{2}\sum_{1 \leq i, j \leq n} (i-j)^2(n-L_{ij}),$$
 where $L_{ij}$ is equal to the sum of the entries in cell $(i,j)$ of $L$.
\end{corollary}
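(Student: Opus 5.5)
The plan is to obtain the formula by substituting the preceding Lemma into Theorem~\ref{rank_corner}. The only hypothesis to check is that an array $L$ with $\Xi(L)\in\C_n$ has every row and column summing to $\frac{n(n+1)}{2}$.

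\emph{Step 1: row and column sums.} Write $A=\Xi^{-1}(\Xi(L))$, so that $L_{ij}=\sum_{k=1}^n kA_{ijk}$. The boundary conditions of Definition~\ref{corner-sum def} give $\Xi(L)_{i,n,k}=ik$ for all $i,k\in[0,n]$. Recovering entries by inclusion--exclusion in the $i$ and $k$ directions, with $j$ summed fully, gives
\[\sum_{j=1}^n A_{ijk}=ik-(i-1)k-i(k-1)+(i-1)(k-1)=1.\]
Hence
\[\sum_{j=1}^n L_{ij}=\sum_{k=1}^n k\sum_{j=1}^n A_{ijk}=\sum_{k=1}^n k=\frac{n(n+1)}{2}.\]
The column sums follow by the same argument, using $\Xi(L)_{n,j,k}=jk$. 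So $L$ satisfies the hypothesis of the preceding Lemma.

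\emph{Step 2: substitution.} The Lemma gives
\[\sum_{1\le i,j\le n}(n-i+1)(n-j+1)(n-L_{ij}+1)=\frac{n^2(n+1)(n^2+n+1)}{6}-\frac12\sum_{1\le i,j\le n}(i-j)^2(n-L_{ij}).\]
Substituting this into Theorem~\ref{rank_corner} yields
\[r(L)=m(n)-\frac{n^2(n+1)(n^2+n+1)}{6}+\frac12\sum_{1\le i,j\le n}(i-j)^2(n-L_{ij}),\]
where $m(n)$ is the expression from Lemma~\ref{minimal_rank}.

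\emph{Step 3: the constant.} Expand
\[\frac{n^2(n+1)(n^2+n+1)}{6}=\frac{n^5+2n^4+2n^3+n^2}{6}=\frac{80n^5+160n^4+160n^3+80n^2}{480}.\]
Subtracting this from $m(n)$ coefficient by coefficient gives $69-80=-11$, $180-160=20$, $170-160=10$ and $60-80=-20$. The linear term $4^{1+(-1)^n}n$ is unchanged. This is exactly the constant in the statement.

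There is no real obstacle here. The only point needing care is Step~1: the Lemma's hypothesis must be derived from membership in $\C_n$ through the boundary conditions of $\Xi(L)$. It must not be assumed only for Latin squares, because for general arrays the cell values $L_{ij}$ are formal sums with possible negative terms.
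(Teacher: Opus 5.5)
Your proposal is correct and follows the paper's route: the corollary comes from substituting the preceding Lemma into Theorem~\ref{rank_corner}, and your constant $m(n)-\frac{n^2(n+1)(n^2+n+1)}{6}$ is computed correctly. Your Step~1 derives the row and column sums $\frac{n(n+1)}{2}$ from the boundary conditions of $\Xi(L)$, essentially Lemma~\ref{line sum}, and so makes explicit the hypothesis check that the paper leaves implicit.
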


In analogy with the 2-dimensional case, this formula again computes  much more explicitly  the length of a saturated chain between a particular $L\in\Xi^{-1}(\C_n)$ and the minimal element. The lower the index of a plane, the more a deviation from the main diagonal contributes to the rank. Informally, the rank of $L$ depends on how much the first plane deviates from $I_n$, how much the second plane deviates from elements of rank $\sim \frac{1}{n-1}n$ in the ASM lattice, etc. For fixed $n$, there is a constant added to the sum, and this reflects the fact that no element of $\Xi^{-1}(\C_n)$ has each plane equal to the minimal 2-dimensional element $I_n$. As seen in Theorem~\ref{row/col/sym swap}, the 3-dimensional Bruhat order closely relies on the structure of the 2-dimensional Bruhat order. Here we see an analogous phenomenon: the rank function on $\C_n$ is closely related to that of the 2-dimensional ASM lattice.

 Given a Latin square, consider calculating the rank of the permutation implied by a row (or column or symbol) in the ASM lattice of order $n$. The sum of these ranks across all rows (or columns or symbols) of a Latin square is constant across $\L_n$. To see this, note that each Latin square in $\L_n$ is also an element of $\Xi^{-1}(\C_n)$. Theorem \ref{thm:CSlattice} and Theorem \ref{cornersum_implies_bruhat_entrywise} imply that $L_1$ can be obtained from $L_2$ by the addition of a sequence of T-blocks, both positive and negative, via their join or meet. Adding a T-block to an array in $\Xi^{-1}(\C_n)$ increases the rank of one plane by 1 and decreases another by 1, maintaining the overall sum of the ranks. This surprising fact, and the value of the sum for fixed $n$, are summarised in the following theorem.

\begin{theorem}
    For $n \in \mathbb{N}$, let $L \in\L_n$, and $P_1, P_2, \dots, P_n$ be the permutation matrices implied by the symbols (resp. rows or columns) of $L$. Then
    \begin{equation}\label{eq:rank sums equal}\sum_{k=1}^n r(P_k) = \frac{n^2(n^2-1)}{12},\end{equation}
    where $r(P_k)$ is the rank of $P_k$ in the ASM lattice of order $n$.   
\end{theorem}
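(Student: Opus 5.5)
The cleanest route avoids the T-block/lattice argument sketched before the statement. It computes the sum directly from the rank formula \eqref{eq:ASM rank} of \cite[Theorem~5.1]{directed graph}: for a permutation matrix $P$ (indeed any ASM) of order $n$, $r(P)=\frac12\sum_{1\le a,b\le n}(a-b)^2P_{ab}$. The key observation is that, in each of the three directions, the planes $P_1,\dots,P_n$ partition the $n^2$ nonzero entries of the permutation hypermatrix $H(L)$. So the sum of their ranks collapses to one sum over all $n^2$ pairs of indices.

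\emph{Symbol planes.} Here $(P_k)_{ij}=1$ exactly when $L_{ij}=k$. Every cell $(i,j)$ lies in exactly one plane, so
\[\sum_{k=1}^n r(P_k)=\frac12\sum_{k=1}^n\sum_{i,j:\,L_{ij}=k}(i-j)^2=\frac12\sum_{1\le i,j\le n}(i-j)^2 .\]

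\emph{Row planes.} For fixed row $i$, the permutation matrix $Q_i$ has a $1$ in position $(L_{ij},j)$ for each $j$, possibly up to transposition depending on the chosen convention. Since $(a-b)^2$ is symmetric, transposition does not change the rank formula. Hence
\[\sum_i r(Q_i)=\frac12\sum_{i,j}(L_{ij}-j)^2 .\]
For each fixed column $j$, the values $L_{ij}$ ($i\in[n]$) run over all of $[n]$, because $L$ is Latin. So this equals $\frac12\sum_{j}\sum_{k=1}^n(k-j)^2$, which is again $\frac12\sum_{1\le a,b\le n}(a-b)^2$.

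\emph{Column planes.} The argument is the same, using that each row of $L$ contains every symbol once.

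It remains to evaluate the common sum using $S(n)$ and $T(n)$:
\[\sum_{a,b}(a-b)^2=2nS(n)-2T(n)^2=\frac{n^2(n+1)(2n+1)}{3}-\frac{n^2(n+1)^2}{2}=\frac{n^2(n+1)(n-1)}{6}.\]
Halving this gives $\frac{n^2(n^2-1)}{12}$, which is \eqref{eq:rank sums equal}.

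The only delicate point is the identification of the row and column planes as permutation matrices, including their orientation. The symmetry of the weight $(a-b)^2$ makes this harmless. I would also remark that the result is consistent with the paper's informal argument: adding a T-block changes the rank of one plane by $+1$ and another by $-1$.
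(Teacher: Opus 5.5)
Your proposal is correct and is essentially the paper's proof. Both apply the rank formula \eqref{eq:ASM rank} plane by plane and use that the planes partition the $n^2$ cells, so the total collapses to $\frac12\sum_{1\le a,b\le n}(a-b)^2=\frac{n^2(n^2-1)}{12}$; your explicit handling of the row and column directions (via the Latin property and the symmetry of $(a-b)^2$) just spells out what the paper compresses into the observation that $\sum_k P_k$ is the all-ones matrix.
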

\begin{proof}
    Equation~\eqref{eq:ASM rank} implies $r(P_k) = \frac{1}{2}\sum_{1 \le i, j \le n} (i-j)^2(P_k)_{ij}$. Since $\left(\sum_{k=1}^n P_k\right)_{ij} = 1$ for all $i,j\in[n]$,
    \[\sum_{k=1}^n r(P_k) = \sum_{k=1}^n\frac{1}{2}\sum_{1 \le i, j \le n} (i-j)^2(P_k)_{ij} = \frac{1}{2}\sum_{1 \le i, j \le n} (i-j)^2 = \frac{n^2(n^2-1)}{12}.\]
\end{proof}

Comparing with \cite{Latin bruhat} again, in which the partial order on a Latin square $L$ was defined by entrywise domination of the 2-dimensional corner-sum $\Sigma(L)$, we can express $\rho(L)$ in terms of the sum of all entries of $\Sigma(L)$.
\begin{theorem}\label{thm:rankL} Let $L$ be an $n \times n$  array such that $\Xi(L) \in \mathcal{C}_n$.
    \[\rho(L) + \sum_{1 \le i, j \le n} \Sigma(L)_{i,j} = \frac{n^2(n+1)^3}{4}\]
\end{theorem}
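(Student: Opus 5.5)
The plan is to express both $\rho(L)$ and $\sum_{i,j}\Sigma(L)_{i,j}$ as weighted sums of the cell values $L_{ij}$ with the same weights $(n-i+1)(n-j+1)$, so that the $L_{ij}$ terms cancel and only a constant remains.

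First, I will justify Eq.~\eqref{eq:rhoL} for an arbitrary $n\times n$ array $L$ with $\Xi(L)\in\C_n$, not only for Latin squares.
\begin{itemize}
\item A term $m$ in cell $(i,j)$ contributes $\operatorname{sgn}(m)$ to $\Xi(L)_{a,b,c}$ for all $a\ge i$, $b\ge j$, $c\ge |m|$.
\item Hence its total contribution to $\rho(L)$ is $\operatorname{sgn}(m)(n-i+1)(n-j+1)(n+1-|m|)$.
\item Summing over the terms of the cell gives $(n-i+1)(n-j+1)\big((n+1)(p-q)-L_{ij}\big)$, where $p$ and $q$ are the numbers of positive and negative terms. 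Here I use $\sum_m \operatorname{sgn}(m)|m| = \sum_m m = L_{ij}$.
\item The boundary conditions $C_{i,j,n}=ij$ of Definition~\ref{corner-sum def} force every vertical line of $\Xi^{-1}(\Xi(L))$ to sum to $1$. Indeed, applying inclusion--exclusion to $C_{i,j,n}-C_{i-1,j,n}-C_{i,j-1,n}+C_{i-1,j-1,n}$ gives $ij-(i-1)j-i(j-1)+(i-1)(j-1)=1$. So $p-q=1$.
\end{itemize}
This yields $\rho(L)=\sum_{i,j}(n-i+1)(n-j+1)(n+1-L_{ij})$.

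Second, I will compute the corner-sum of the value matrix by exchanging the order of summation:
\[\sum_{1\le i,j\le n}\Sigma(L)_{i,j}=\sum_{1\le i,j\le n}\ \sum_{a\le i,\,b\le j}L_{ab}=\sum_{1\le a,b\le n}(n-a+1)(n-b+1)L_{ab}.\]
The row and column indexed $0$ of $\Sigma(L)$ are zero, so it does not matter whether they are included in the sum.

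Adding the two expressions cancels the $L_{ij}$ terms and leaves
\[(n+1)\sum_{1\le i,j\le n}(n-i+1)(n-j+1)=(n+1)\,T(n)^2=(n+1)\frac{n^2(n+1)^2}{4}=\frac{n^2(n+1)^3}{4},\]
which is the claimed identity.

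The only point needing care is the first step: extending Eq.~\eqref{eq:rhoL}, which was derived for cells written as formal sums, to all of $\Xi^{-1}(\C_n)$. This requires checking that every cell has exactly one more positive than negative term, i.e.\ unit vertical line sums. Everything else is a direct rearrangement of sums.
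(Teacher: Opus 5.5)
Your proposal is correct and follows the paper's proof. Both arguments count each cell value $L_{ij}$ exactly $(n-i+1)(n-j+1)$ times in $\sum\Sigma(L)_{i,j}$, add this to Eq.~\eqref{eq:rhoL} so the $L_{ij}$ terms cancel, and evaluate $(n+1)T(n)^2$. Your explicit check that each cell has one more positive than negative term (unit vertical line sums, as in Lemma~\ref{line sum}) justifies a step the paper uses only implicitly when deriving Eq.~\eqref{eq:rhoL}.
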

\begin{proof}
    Since $\Sigma(L)_{i,j}$ is the sum of all entries above and to the left of position $(i,j)$, it follows that $L_{ij}$ is counted in all entries of $\Sigma(L)$ from row $i$ onwards and column $j$ onwards, where $L_{ij}$ is equal to the sum of the entries in cell $(i,j)$ of $L$. Therefore $L_{ij}$ is counted $(n-i+1)(n-j+1)$ times in $\sum_{1 \le i, j \le n} \Sigma(L)_{i,j}$, giving the following.
    \[\sum_{1 \le i, j \le n} \Sigma(L)_{i,j} = \sum_{1 \le i, j \le n} (n-i+1)(n-j+1)L_{ij}\]
    Combining this with Equation~\eqref{eq:rhoL} gives the following.
    \begin{equation*}
    \begin{aligned}
       \rho(L) + \sum_{1 \le i, j \le n} \Sigma(L)_{i,j} &= \sum_{1 \leq i,j \leq n} (n-i+1)(n-j+1)\big[(n-L_{ij}+1) + L_{ij}\big]\\
       & = (n+1)\sum_{1 \leq i,j \leq n} (n-i+1)(n-j+1) \\
       &= (n+1)\sum_{i=1}^n(n-i+1)\sum_{j=1}^n(n-j+1) = (n+1)\left(\frac{n(n+1)}{2}\right)^2
    \end{aligned}
    \end{equation*}
\end{proof}

 The \emph{rank} of a graded poset $P$ is the rank of a maximal element of $P$, or equivalently the length of any maximal chain in $P$.
\begin{theorem}\label{thm:rankCn}
    The rank of $\mathcal{C}_n$ is $$\frac{9n^5 - 10n^3 + (4^{1+(-1)^n})n}{240}.$$
\end{theorem}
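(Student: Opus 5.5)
The plan is to use that $\C_n$ is graded (Corollary~\ref{cor:FDL}) and that every covering relation changes the entry sum of the corner-sum hypermatrix by exactly $1$. The proof of Theorem~\ref{cornersum_implies_bruhat_entrywise} shows that adding a positive contiguous T-block adds $1$ to a single entry of the corner-sum. So the rank of $\C_n$ is $\rho(\Xi^{-1}(M_n)) - \rho(\Xi^{-1}(M'_n))$, where $M'_n$ is the maximum element given in \eqref{eq:max}. The first term is already known from Lemma~\ref{minimal_rank}. Rather than summing the expression \eqref{eq:max} directly, which would repeat the long computation of that lemma, I will compute the second term through a symmetry of $\C_n$.

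The key step is to define, for $C\in\C_n$, the hypermatrix $\phi(C)_{i,j,k} = ij - C_{i,j,n-k}$. At the level of the underlying arrays this reverses the order of the planes. Since the full $k$-slab of rows $[i]$ and columns $[j]$ sums to $ij$, we have $\Xi(A')_{i,j,k} = ij - \Xi(A)_{i,j,n-k}$ whenever $A'_{i,j,k}=A_{i,j,n+1-k}$. I will verify the conditions of Definition~\ref{corner-sum def} for $\phi(C)$ as follows.
\begin{itemize}
\item The boundary values are immediate.
\item In the $k$-direction the consecutive differences of $\phi(C)$ are those of $C$ read in reverse order, so they lie in the same range $\{\max(0,i+j-n),\dots,\min(i,j)\}$.
\item In the $i$-direction (and symmetrically the $j$-direction) the difference is $j - (C_{i,j,n-k}-C_{i-1,j,n-k})$. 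The inner difference lies in $\{\max(0,j+(n-k)-n),\dots,\min(j,n-k)\}$. Subtracting from $j$ gives exactly $\{\max(0,j+k-n),\dots,\min(j,k)\}$, as required.
\end{itemize}
Hence $\phi$ is an involution of $\C_n$ that reverses entrywise order. It therefore sends the minimum $M_n$ to the maximum, so $M'_n = \phi(M_n)$.

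Consequently $\rho(\Xi^{-1}(M'_n)) = \sum_{0\le i,j,k\le n} ij - m(n) = (n+1)T(n)^2 - m(n)$, and the rank equals $2m(n) - \frac{n^2(n+1)^3}{4}$. Substituting the formula from Lemma~\ref{minimal_rank} gives
\[
\frac{69n^5+180n^4+170n^3+60n^2+4^{1+(-1)^n}n}{240} - \frac{60n^5+180n^4+180n^3+60n^2}{240},
\]
which simplifies to $\frac{9n^5-10n^3+4^{1+(-1)^n}n}{240}$.

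The only step needing real care is checking that $\phi$ preserves the difference constraints in the $i$- and $j$-directions. That check is the short interval computation above; everything else is arithmetic.
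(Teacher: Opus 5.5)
Your proof is correct, and it takes a genuinely different route from the paper's.

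The paper computes the entrywise difference $D_n$ between the two extremal corner-sum hypermatrices using the case tables \eqref{eq:minentries} and \eqref{eq:maxentries}. This gives the symmetric closed form $(D_n)_{i,j,k}=\min(\bar i\,\bar j,\ \bar j\,\bar k,\ \bar i\,\bar k)$ with $\bar x=\min(x,n-x)$. The paper then sums this directly, in detail only for odd $n$; the even case is asserted.

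You instead use the plane-reversal map $\phi(C)_{i,j,k}=ij-C_{i,j,n-k}$. Your interval check in the $i$- and $j$-directions is right, so $\phi$ is an order-reversing involution of $\C_n$. Hence the maximum is $\phi(M_n)$, which incidentally proves \eqref{eq:max}, a formula the paper states without proof. The rank then collapses to $2m(n)-(n+1)T(n)^2$, for both parities at once.

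The price is independence. Your identity shows the theorem is equivalent to Lemma~\ref{minimal_rank}, so its correctness rests entirely on that lemma's long computation. The paper's calculation of $D_n$ is a separate derivation, and it also yields the interval length at each entry. Your identity then becomes a useful cross-check between the two formulas in the paper.

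One point of citation. Both arguments need that every cover in $\C_n$ changes a single corner-sum entry by exactly $1$. The proof of Theorem~\ref{cornersum_implies_bruhat_entrywise}, which you cite, only gives the easy direction: each contiguous positive T-block is such a change. The direction you actually need is the one asserted in the remark after Theorem~\ref{thm:CSlattice}, so cite that instead.
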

\begin{proof}
    The rank of $\mathcal{C}_n$ is the sum of all entries in the hypermatrix $D_n$ equal to the difference between the minimum and maximum corner-sum hypermatrices of order $n$.  By \eqref{eq:max} and \eqref{eq:Mn}, the $(i,j,k)$-entry of $D_n$ is therefore
    \[\min\big(k\min(i,j),\: ij - (n-k)\max(0, i+j-n)\big) - \max\big(k\max(0,i+j-n), ij-(n-k)\min(i,j)\big).\]

    Recall the table for the entries of $M_n$ in the proof of Lemma \ref{minimal_element}. The following is the equivalent table for the entries of the maximum element.
    \begin{equation}\label{eq:maxentries}\begin{array}{c|c|c|}
        & i\le n-j& i>n-j \\
    \hline
     i \le j   &i\max(0,j+k-n) & \max\big(k(i+j-n),i(j+k-n)\big) \\
     \hline
     i > j   &j\max(0,i+k-n) &\max\big(k(i+j-n),j(i+k-n)\big) \\
         \hline
    \end{array}\end{equation}

    Subtracting the entries of Eq.~\eqref{eq:minentries} from those of Eq.~\eqref{eq:maxentries} gives the following table for the entries of $D_n$.
    \[\begin{array}{c|c|c|}
        & i\le n-j& i>n-j \\
    \hline
     i \le j   &
     
     \begin{array}{c|c|c|}
        & j\le n-k& j>n-k \\
    \hline
     j \le k   &ij & i(n-k) \\
     \hline
     j > k   &ik &i(n-j) \\
         \hline
    \end{array}
     
     & 
     
     \begin{array}{c|c|c|}
        & i\le n-k& i>n-k \\
    \hline
     i \le k   &k(n-j) & (n-i)(n-j) \\
     \hline
     i > k   &i(n-j) & (n-j)(n-k) \\
         \hline
    \end{array} 
     
     \\
     \hline
     i > j   &
     
    \begin{array}{c|c|c|}
        & i\le n-k& i>n-k \\
    \hline
     i \le k   &ij & j(n-k) \\
     \hline
     i > k   &jk & j(n-i) \\
         \hline
    \end{array}
     
     &
     
    \begin{array}{c|c|c|}
        & j\le n-k& j>n-k \\
    \hline
     j \le k   &k(n-i) & (n-i)(n-j) \\
     \hline
     j > k   & j(n-i) &(n-i)(n-k) \\
         \hline
    \end{array}
     
     \\
         \hline
    \end{array}\]

    Therefore the $(i,j,k)$-entry of $D_n$ can be rewritten as
    \[\min\big(\min(i,n-i)\min(j,n-j),\: \min(j,n-j)\min(k, n-k),\: \min(i,n-i)\min(k, n-k)\big).\]

    By symmetry, if $n$ is odd, the sum over all entries is equal to 
    \[8\sum_{0\le i,j,k \le \frac{n-1}{2}} \min(ij,ik,jk)=8\sum_{k=0}^{\frac{n-1}{2}} \left(\sum_{j=0}^k\sum_{i=0}^k ij + \sum_{j=k+1}^{\frac{n-1}{2}}\sum_{i=0}^j ik + \sum_{i=k+1}^{\frac{n-1}{2}}\sum_{j=0}^{i-1} jk\right).\]
    %\[=8\sum_{k=0}^{\frac{n-1}{2}} \Big(\frac{k^2(k+1)^2}{4} + k\sum_{j=k+1}^{\frac{n-1}{2}}\frac{j(j+1)}{2} + k\sum_{i=k+1}^{\frac{n-1}{2}}\frac{i(i+1)}{2}\Big)\]
    \[=8\sum_{k=0}^{\frac{n-1}{2}} \left(\frac{k^2(k+1)^2}{4} + \frac{k}{2}\sum_{j=k+1}^{\frac{n-1}{2}}(j^2+j) + \frac{k}{2}\sum_{i=k+1}^{\frac{n-1}{2}}(i^2-i)\right)=8\sum_{k=0}^{\frac{n-1}{2}} \left(\frac{k^2(k+1)^2}{4} + k\sum_{i=k+1}^{\frac{n-1}{2}}i^2\right)\]
    \[=8\sum_{k=0}^{\frac{n-1}{2}} \left(\frac{k^2(k+1)^2}{4} + k\left(\frac{\frac{n-1}{2}(\frac{n+1}{2})(n)}{6} - \frac{k(k+1)(2k+1)}{6}\right)\right)\]
    %\[=\sum_{k=0}^{\frac{n-1}{2}} \left(2k^2(k+1)^2 + \frac{k}{3}\left((n-1)n(n+1) - 4k(k+1)(2k+1)\right)\right)\]
    \[=\sum_{k=0}^{\frac{n-1}{2}} \left(2k^2(k+1)^2 -\frac{4k^2(k+1)(2k+1)}{3}
    +\frac{(n-1)n(n+1)}{3}k\right)\]
    \[=\frac{1}{3}\sum_{k=0}^{\frac{n-1}{2}} \left(-2k^4+2k^2+(n-1)n(n+1)k\right)\]
    \[=\frac{1}{3}\left(-2\frac{(\frac{n-1}{2})(\frac{n+1}{2})(n)(3(\frac{n-1}{2})^2+3(\frac{n-1}{2})-1)}{30}+2\frac{(\frac{n-1}{2})(\frac{n+1}{2})n}{6}+(n-1)n(n+1)\frac{(\frac{n-1}{2})(\frac{n+1}{2})}{2}\right)\]
    \[=\frac{9n^5-10n^3+n}{240}\]
    
    A similar calculation for $n$ even yields $\sum\limits_{0\le i,j,k \le n}(D_n)_{i,j,k} = \frac{9n^5-10n^3+16n}{240}$.
\end{proof}
It is an open question whether the previous result and the structure of the lattice $\mathcal C _n$ can be used to improve the known bounds on the number of Latin squares of given size, cf.~Section~\ref{sec:conclusions}.
%\begin{corollary}
%    \[|\L_n| \le 2^{\frac{9n^5 - 10n^3 + (4^{1+(-1)^n})n}{240}}\]
%\end{corollary}}

\section{The corner-sum Latin lattice}\label{sec:DM}
We now address the question of  whether or not corner-sum hypermatrices are the Dedekind-MacNeille completion of the Latin square poset, and we describe  properties of the set $\Xi^{-1}(\C_n)$. The lattice $\mathcal C_3$ contains 35 elements, see also Section~\ref{sec:conclusions}; its Hasse diagram is displayed in Figure~\ref{lattice3}.

 Using SageMath \cite{sage}, we have verified that the lattice $\mathcal{C}_n$ of $n \times n \times n$ corner-sum hypermatrices is the Dedekind-MacNeille completion of Latin squares of order $n$ for $n=3$. However, this is not true for $n\ge4$, as we now show.

Recall that an element in a finite lattice is \emph{join-irreducible} if and only if it covers a single element. We prove our claim by showing that $\mathcal{C}_n$ contains a join-irreducible element which is not a Latin square. Recall that $M_n$ is the minimal element in $\mathcal{C}_n$.

\begin{lemma}\label{join-irreducible}
    Let $n\ge4$, and $U_n$ be the hypermatrix which differs from $M_n$  only in the $(2,2,2)$-position by $-1$. Then $U_n$ is a corner-sum hypermatrix and $\Xi^{-1}(U_n)$ is not a Latin square.
\end{lemma}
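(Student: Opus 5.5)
The plan is to verify both claims directly from the explicit formula \eqref{eq:Mn} for $M_n$ in Lemma~\ref{minimal_element}. All relevant entries have indices at most $3$, so for $n\ge4$ the $\max(0,i+j-n)$ terms mostly vanish and the values are easy to write down. The first claim reduces to local difference checks in Definition~\ref{corner-sum def}. The second reduces to computing one entry of $\Xi^{-1}(U_n)$ and invoking Lemma~\ref{cornersum permutations}.

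\emph{Step 1: the entries of $M_n$ near $(2,2,2)$.} For $i,j,k\le 2$ we have $i+j\le 4\le n$, so $(M_n)_{i,j,k}=\min\big(k\min(i,j),\,ij\big)$. This gives
\begin{itemize}
\item $(M_n)_{1,1,1}=1$;
\item $(M_n)_{2,1,1}=(M_n)_{1,2,1}=(M_n)_{1,1,2}=1$;
\item $(M_n)_{2,2,1}=(M_n)_{2,1,2}=(M_n)_{1,2,2}=2$;
\item $(M_n)_{2,2,2}=4$.
\end{itemize}
For the entries following $(2,2,2)$, the formula gives $(M_n)_{2,2,3}=\min(6,4)=4$. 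For $(M_n)_{3,2,2}$ I split into two cases:
\begin{itemize}
\item if $n\ge5$, it equals $\min(4,6)=4$;
\item if $n=4$, we have $\max(0,i+j-n)=1$, so it equals $\min(4,\,6-2)=4$.
\end{itemize}
By the symmetry of $M_n$ under permuting indices (established in the proof of Lemma~\ref{minimal_element}), also $(M_n)_{2,3,2}=4$.

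\emph{Step 2: $U_n\in\C_n$.} Since $2\notin\{0,n\}$, lowering the $(2,2,2)$-entry does not touch the boundary, and only six consecutive differences change. In each direction, the difference into position $2$ becomes $3-2=1$ and the difference out of position $2$ becomes $4-3=1$. For the fixed pair of other indices $(2,2)$, the allowed range is $\{\max(0,4-n),\dots,2\}\ni 1$. Every other difference is unchanged, so $U_n$ satisfies Definition~\ref{corner-sum def}.

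\emph{Step 3: $\Xi^{-1}(U_n)$ is not a Latin square.} By linearity of $\Xi^{-1}$, the only entry I need is at $(2,2,2)$. There, lowering $U_n$ by one lowers $\Xi^{-1}(U_n)_{2,2,2}$ by exactly one. Using Step 1,
\[\Xi^{-1}(M_n)_{2,2,2}=4-2-2-2+1+1+1-1=0,\]
so $\Xi^{-1}(U_n)_{2,2,2}=-1\notin\{0,1\}$. By Lemma~\ref{cornersum permutations}, $\Xi^{-1}(U_n)$ is not a permutation hypermatrix, hence not a Latin square.

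The only delicate point is the case split for $n=4$ versus $n\ge5$ in the entries just beyond $(2,2,2)$, which Step 1 handles.
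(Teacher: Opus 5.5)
Your proposal is correct and follows essentially the same route as the paper: read off the entries of $M_n$ around $(2,2,2)$ from \eqref{eq:Mn}, check that the six affected consecutive differences all equal $1\in\{0,1,2\}$, and evaluate $\Xi^{-1}(U_n)_{2,2,2}=3-2-2-2+1+1+1-1=-1\notin\{0,1\}$. The only difference is cosmetic. The paper uses the index symmetry of $M_n$ to reduce to the $k$-direction, where $(M_n)_{2,2,k}=\min(2k,4)$, whereas you compute $(M_n)_{3,2,2}$ directly, splitting into the cases $n=4$ and $n\ge5$.
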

\begin{proof}
    To prove $U_n$ is a corner-sum hypermatrix, the symmetry of $U_n$ means we only need to check $(U_n)_{2,2,2}-(U_n)_{2,2,1},(U_n)_{2,2,3}-(U_n)_{2,2,2} \in \{\max(0,2+2-n),\dots,\min(2,2)\} = \{0,1,2\}$.
    \[(M_n)_{2,2,k} = \min(k\min(2,2), 2\cdot2 - (n-k)\max(0, 2+2-n)) = \min(2k,4)\]
    Therefore $(U_n)_{2,2,1} = 2$, $(U_n)_{2,2,2} = 4-1=3$, and $(U_n)_{2,2,3} = 4$. So $(U_n)_{2,2,2}-(U_n)_{2,2,1} = (U_n)_{2,2,3}-(U_n)_{2,2,2} = 1$, and therefore $U_n$ is a corner-sum hypermatrix.

  $(U_n)_{2,2,2} = 3$, and otherwise $(U_n)_{i,j,k} = \min\big(k\min(i,j),\: ij - (n-k)\max(0, i+j-n)\big) = (M_n)_{i,j,k}$.  It is easy to see that therefore $(U_n)_{1,1,1}=1=(U_n)_{1,1,2}$ and $(U_n)_{2,2,1}=2$.

    Since $(U_n)_{2,1,1} = (U_n)_{1,2,1} = (U_n)_{1,1,2}$ and $(U_n)_{2,2,1} = (U_n)_{1,2,2} = (U_n)_{2,1,2}$, \[\Xi^{-1}(U_n)_{2,2,2} = 3 -2 -2 -2 + 1+1+1-1 = -1.\] 
    So $\Xi^{-1}(U_n)$ contains an entry not in $\{0,1\}$, and is therefore not a Latin square.
\end{proof}

\begin{theorem}
    Let $n\ge4$. Then $\mathcal{C}_n$ is not the Dedekind-MacNeille completion of $\Xi(\mathcal{L}_n)$.
\end{theorem}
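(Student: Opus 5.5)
The plan is to use the standard characterisation of the Dedekind--MacNeille completion of a finite poset $P$: it is, up to isomorphism, the unique lattice $\mathsf{L}$ containing $P$ such that every element of $\mathsf{L}$ is both a join of elements of $P$ and a meet of elements of $P$. In particular, every join-irreducible element of the completion must lie in $P$, since a join-irreducible element that is a join of a subset of $P$ equals one of the elements of that subset. So it suffices to show that $\C_n$ has a join-irreducible element not in $\Xi(\L_n)$. The natural candidate is $U_n$ from Lemma~\ref{join-irreducible}, which already lies in $\C_n$ and satisfies $U_n\notin\Xi(\L_n)$.

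First, I check the orientation. In $(\C_n,\ge)$ the minimal element is $M_n$, and the order is reversed entrywise domination. $U_n$ differs from $M_n$ by $-1$ in a single entry, so $M_n$ covers $U_n$ in the order where larger entries mean lower rank. Whether $U_n$ turns out to be join- or meet-irreducible depends only on this orientation, and the argument is symmetric: an element is join-irreducible in $P$ exactly when it is meet-irreducible in the dual. So I would set up the statement for whichever of the two applies. Since the completion of the dual is the dual of the completion, either one suffices.

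The key step is to show that $U_n$ has exactly one neighbour on the side of $M_n$. Covers in $\C_n$ change a single entry by $1$, by the remark following Theorem~\ref{thm:CSlattice} that covers are single contiguous T-blocks. Equivalently, this holds because $\C_n$ is a distributive lattice of integer hypermatrices closed under entrywise min and max, in which each cover changes one entry by one. So any element $V$ lying between $M_n$ and $U_n$ at distance one from $U_n$ differs from $U_n$ in one entry by $\pm1$ in the direction of $M_n$, meaning $V$ has entrywise smaller-or-equal rank. Every entry of $U_n$ other than $(2,2,2)$ already equals the corresponding entry of $M_n$. That entry is the extreme value allowed by Definition~\ref{corner-sum def}, because $M_n$ maximises every entry subject to the constraints, by Lemma~\ref{minimal_element}. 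So the only possible move is to increase entry $(2,2,2)$ back by $1$, which yields $M_n$. Hence $M_n$ is the unique element covering $U_n$ on that side, and $U_n$ is irreducible.

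I expect the main obstacle to be bookkeeping rather than any real difficulty. The work lies in matching the direction of the order $\ge$ against the notion of join- versus meet-irreducibility, and in justifying that covers in $\C_n$ change exactly one entry by one. For the latter I would argue directly: if $V\neq U_n$ is comparable to $U_n$ on the $M_n$ side, then $\min(V,M_n)$ or $\max(V,M_n)$ reasoning forces $V\ge U_n$ entrywise with equality off $(2,2,2)$. Since the entry at $(2,2,2)$ is bounded by the value $4$ attained in $M_n$, this forces $V=M_n$. Finally, I would conclude that $U_n$ is an irreducible element of $\C_n$ outside $\Xi(\L_n)$. Hence $U_n$ is not expressible as a join (or, dually, a meet) of Latin squares, so $\C_n$ is not the Dedekind--MacNeille completion of $\Xi(\L_n)$.
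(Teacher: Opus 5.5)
Your proposal is correct and follows the paper's proof: $U_n$ is an irreducible element of $\C_n$ that lies outside $\Xi(\L_n)$, which cannot happen in a Dedekind--MacNeille completion because the completion is join-dense (and meet-dense). One small correction on orientation: since $M_n$ is the minimum and $U_n$ has the smaller $(2,2,2)$-entry, it is $U_n$ that covers $M_n$, not the other way round. So $U_n$ is an atom and hence join-irreducible at once, and your final sandwich argument ($U_n\le V\le M_n$ entrywise forces $V=M_n$) makes the appeal to the general claim that covers in $\C_n$ change a single entry by one unnecessary.
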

\begin{proof}
    Let $n\ge4$ and let $U_n$ be the corner-sum hypermatrix defined in Lemma~\ref{join-irreducible}. Then $U_n$ is a join-irreducible element of $\mathcal{C}_n$, since it covers the minimal element, and by Lemma~\ref{join-irreducible} it is not in $\Xi(\mathcal{L}_n)$.

    This suffices to show that $\mathcal{C}_n$ is not the Dedekind-MacNeille completion of Latin squares, by the following classical argument. Recall that the image of a poset $P$ in its Dedekind-MacNeille completion is join-dense (see, e.g. \cite[Theorem 7.41]{DP}). That is, every element of said completion is the join of some subset of elements of $P$.  The element $U_n$ only covers the minimal element of $\mathcal C_n$ so it is join-irreducible but it is not the image of a Latin square, so $\Ccal_n$ is not the Dedekind-MacNeille completion of $\Xi(\mathcal{L}_n)$.\qedhere
 
\end{proof}

The lattice $\mathcal{C}_3$ contains the image under $\Xi$ of all Latin squares, ASHMs, PASHMs, and hypermatrices resulting from permuting indices in a PASHM. Additionally, $\C_3$ contains hypermatrices with entries not in $\{-1,0,1\}$, such as the following. See also Fig. \ref{lattice3}.
\[\begin{array}{|c|c|c|}
\hline
2	&	\SmallArray{1-2\\+3}	&	2\\
\hline
\SmallArray{1-2\\+3}	&	\SmallArray{-1+2+2\\+2-3}	&	\SmallArray{1-2\\+3}\\
\hline
2	&	\SmallArray{1-2\\+3}	&	2\\
\hline
\end{array} = \left(\begin{array}{rrr}0&1&0\\1&-1&1\\0&1&0\end{array}\right)
\nearrow
\left(\begin{array}{rrr}1&-1&1\\-1&3&-1\\1&-1&1\end{array}\right)
\nearrow
\left(\begin{array}{rrr}0&1&0\\1&-1&1\\0&1&0\end{array}\right)\]

\begin{lemma}\label{iff pre-image of Cn}
    Let $A$ be a hypermatrix of order $n$ and $C = \Xi(A)$. Then $C \in \mathcal{C}_n$ if and only if the corner-sum matrix of each plane $P$ of $A$ (in all three directions) satisfies $\Sigma(J_n) \le \Sigma(P) \le \Sigma(I_n)$ entrywise.
\end{lemma}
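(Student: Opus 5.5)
The plan is to unwind the definitions. Both sides of the equivalence describe the same quantity: the differences between consecutive planes of $C=\Xi(A)$, in each of the three directions.

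\textbf{The key identity.} Fix the $k$-direction and let $P$ be the $k$th plane of $A$, so $P_{a,b}=A_{a,b,k}$. For $0\le i,j\le n$, the definition of $\Xi$ gives
\[C_{i,j,k}-C_{i,j,k-1}=\sum_{a=1}^i\sum_{b=1}^j A_{a,b,k}=\Sigma(P)_{i,j}.\]
The same holds in the other two directions: $C_{i,k,j}-C_{i,k-1,j}$ is the $(i,j)$-entry of the corner-sum of the $k$th plane in the $j$-direction, and $C_{k,i,j}-C_{k-1,i,j}$ is the corresponding entry for the $i$-direction. Note that $\Sigma(P)_{i,0}=\Sigma(P)_{0,j}=0$ automatically, and this matches the fixed zero boundary of a corner-sum hypermatrix.

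\textbf{The entry bounds.} Next I would record that
\[\Sigma(I_n)_{i,j}=\min(i,j)\quad\text{and}\quad\Sigma(J_n)_{i,j}=\max(0,i+j-n).\]
For $J_n$, the top-left $i\times j$ block contains the ones in positions $(a,n+1-a)$ with $a\le i$ and $n+1-a\le j$, and there are $\max(0,i+j-n)$ of these. Hence the condition $\Sigma(J_n)\le\Sigma(P)\le\Sigma(I_n)$ entrywise, for every plane $P$ in every direction, is literally the difference condition in Definition~\ref{corner-sum def}. The differences are integers, so membership in $\{\max(0,i+j-n),\dots,\min(i,j)\}$ is the same as lying between the two bounds.

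\textbf{The boundary conditions.} It remains to handle the conditions $C_{i,j,0}=0$ and $C_{i,j,n}=ij$ (and their permutations).
\begin{itemize}
\item The zero conditions hold automatically for any $\Xi(A)$, since a sum over an empty index range is zero.
\item For the $ij$ conditions in the forward direction: if $C\in\C_n$, the differences satisfy the inequalities by the identity above.
\item For the converse: summing the $n$ plane differences telescopes, so $C_{i,j,n}=\sum_{k=1}^n\Sigma(P_k)_{i,j}$. This is not forced to equal $ij$ by the inequalities alone.
\end{itemize}

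\textbf{Main obstacle.} The converse needs $C_{i,j,n}=ij$, and the inequalities do not supply it. I would try to derive it from the plane conditions in the other directions. Taking the full plane $k=n$ in the $i$-direction, the bounds $\Sigma(J_n)_{j,n}=j=\Sigma(I_n)_{j,n}$ force $C_{n,j,l}-C_{n-1,j,l}=\min(j,l)$-type equalities. I expect this bookkeeping to show, at least, that all line sums of $A$ equal $1$. If line sums do not follow from the hypothesis, I would read the lemma as implicitly assuming them: the boundary conditions would then be the standing part of the hypothesis, with the plane inequalities equivalent to the difference conditions, and I would state this explicitly in the proof. With line sums equal to $1$, $\Sigma(P_k)_{i,n}=i$ and $\Sigma(P_k)_{n,j}=j$ for each plane, and summing over the $k$ planes restricted to the $i\times j$ block gives exactly $ij$, completing the equivalence.
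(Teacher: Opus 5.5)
Your approach is the paper's. Its proof consists of your identity $\Sigma(P)_{i,j}=C_{i,j,k}-C_{i,j,k-1}$ (and its analogues in the other two directions), together with $\Sigma(J_n)_{i,j}=\max(0,i+j-n)$ and $\Sigma(I_n)_{i,j}=\min(i,j)$; it then declares the rest trivial.

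You are right that the converse owes a word about the fixed faces $C_{i,j,n}=ij$, but your proposal leaves that step open.
\begin{itemize}
\item Your remark that ``the inequalities do not supply it'' is true only of the $k$-direction inequalities at $(i,j)$. The inequalities in the other directions force it outright.
\item Your sketch of that derivation looks at the wrong entries. The quantity $C_{n,j,l}-C_{n-1,j,l}$ is only bounded by the hypothesis, not determined. For example, for $n=2$ and the Latin square with rows $1\,2$ and $2\,1$, it equals $0\neq\min(1,1)$ at $j=l=1$.
\item The fallback of reading unit line sums as an implicit hypothesis would prove a weaker statement than the lemma.
\end{itemize}

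The missing step is a single squeeze at the last index. For every $a\in[n]$, the difference $C_{a,j,n}-C_{a-1,j,n}$ is the $(j,n)$-entry of the corner-sum of the $a$th plane in the $i$-direction. The hypothesis places it between $\Sigma(J_n)_{j,n}=j$ and $\Sigma(I_n)_{j,n}=j$, so it equals $j$. Telescoping from $C_{0,j,n}=0$, which is automatic for $\Xi(A)$, gives $C_{i,j,n}=ij$. The faces $C_{i,n,j}=ij$ and $C_{n,i,j}=ij$ follow by permuting the roles of the directions.

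This is what underlies the paper's ``follows trivially''. The remark after Definition~\ref{corner-sum def} notes that at the extreme index the range of allowed differences collapses to a single value, so the fixed faces are forced by the difference conditions and the zero faces.

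Equivalently, your guess about line sums is correct. The facts $\Sigma(P)_{i,n}=i$ and $\Sigma(P)_{n,j}=j$ for every plane $P$ in every direction force every line of $A$ to sum to $1$, and your final computation then goes through.
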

\begin{proof}
    The $(i,j)$-entry of $\Sigma(J_n)$ is $\max(0, i+j-n)$ and the $(i,j)$-entry of $\Sigma(I_n)$ is $\min(i,j)$. If $P$ is the plane with fixed $k$, then $\Sigma(P) = C_{i,j,k} - C_{i,j,k-1}$. The same is true in each direction, and the result follows trivially.
\end{proof}

While Lemma \ref{iff pre-image of Cn} provides necessary and sufficient conditions for a given hypermatrix to be in $\Xi^{-1}(\C_n)$, we now provide an alternative set of necessary, but not sufficient, conditions.

\begin{lemma}\label{line sum}
    Let $A = \Xi^{-1}(C)$, where $C \in \mathcal{C}_n$ for some $n\geq 1$. Then each line in $A$ sums to $1$.
\end{lemma}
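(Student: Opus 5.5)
The plan is to compute line sums of $A$ directly from the boundary conditions in Definition~\ref{corner-sum def}, using the inclusion--exclusion formula for $\Xi^{-1}$. By the symmetry of the definition in the three directions, it suffices to treat vertical lines $A_{ij*}$ for fixed $i,j\in[n]$; rows and columns follow by interchanging the roles of the indices.

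The first step is to telescope the line sum. For fixed $i,j$ we have
\[\sum_{k=1}^n A_{i,j,k} = \sum_{k=1}^n \big(F(k)-F(k-1)\big),\]
where $F(k) = C_{i,j,k}-C_{i-1,j,k}-C_{i,j-1,k}+C_{i-1,j-1,k}$. This holds because the eight-term expression $\Xi^{-1}(C)_{i,j,k}$ is exactly $F(k)-F(k-1)$. The sum therefore collapses to $F(n)-F(0)$.

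The second step evaluates the two endpoints. The boundary conditions $C_{a,b,0}=0$ give $F(0)=0$. The conditions $C_{a,b,n}=ab$ for all $a,b\in[0,n]$ give
\[F(n) = ij-(i-1)j-i(j-1)+(i-1)(j-1) = 1.\]
Hence every vertical line sums to $1$. The same computation with the boundary conditions $C_{i,0,j}=0$, $C_{i,n,j}=ij$ (respectively $C_{0,i,j}=0$, $C_{n,i,j}=ij$) handles the other two directions.

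There is no real obstacle. The only points requiring care are confirming that the boundary equalities hold for all index pairs in $[0,n]$, including those where $i-1=0$ or $j-1=0$, and checking the sign bookkeeping in the telescoping identity. Notably, the difference constraints in the definition are not needed at all; only the boundary values matter.
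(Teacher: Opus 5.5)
Your proposal is correct and is essentially the paper's argument. The paper writes $C_{i,j,k}-C_{i-1,j,k}-C_{i,j-1,k}+C_{i-1,j-1,k}=\sum_{x=1}^k A_{ijx}$ (your $F(k)$, obtained from $C=\Xi(A)$ rather than by telescoping from $F(0)=0$), then evaluates at $k=n$ using $C_{i,j,n}=ij$ to get $1$, with the other two directions handled by symmetry and only the boundary conditions used.
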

\begin{proof}
    Fix $i,j \in [n]$. For all $k\in [n]$,
    \[C_{i,j,k} = \sum_{x,y,z = 1}^{i,j,k}A_{xyz} \implies C_{i,j,k}-C_{i-1,j,k}-C_{i,j-1,k}+C_{i-1,j-1,k} = \sum\limits_{x = 1}^{k}A_{ijx}.\]
    By definition of a corner-sum hypermatrix, we have $C_{i,j,n} = ij$. Therefore 
    \[\sum\limits_{x = 1}^{n}A_{ijx} %= C_{i,j,n}-C_{i-1,j,n}-C_{i,j-1,n}+C_{i-1,j-1,n}
    = ij - (i-1)j - i(j-1) + (i-1)(j-1) = 1.\]
    It can be similarly proven that $\sum\limits_{x = 1}^{n}A_{ixj} = \sum\limits_{x = 1}^{n}A_{xij} = 1$.
\end{proof}

\newgeometry{left=0.1cm, right = 0.1cm, top = 0.1cm, bottom = 2cm}
\begin{figure}[htb]\label{lattice3}
\begin{center}\begin{NiceTabular}{ccccccc}

&&&$\begin{array}{|c|c|c|}
\hline
3	&			2	&	1\\
\hline
2	&\SmallArray{1-2\\+3}	&	2\\
\hline
1	&			2	&	3\\
\hline
\end{array}$&&&

\\

$\begin{array}{|c|c|c|}
\hline
3	&	1	&	2\\
\hline
2	&	3	&	1\\
\hline
1	&	2	&	3\\
\hline
\end{array}$
&&
$\begin{array}{|c|c|c|}
\hline
3	&	2	&	1\\
\hline
1	&	3	&	2\\
\hline
2	&	1	&	3\\
\hline
\end{array}$
&\phantom{$\begin{pmatrix}0\\0\\0\\0\\0\\\end{pmatrix}$}&
$\begin{array}{|c|c|c|}
\hline
3	&	2	&	1\\
\hline
2	&	1	&	3\\
\hline
1	&	3	&	2\\
\hline
\end{array}$
&&
$\begin{array}{|c|c|c|}
\hline
2	&	3	&	1\\
\hline
3	&	1	&	2\\
\hline
1	&	2	&	3\\
\hline
\end{array}$

\\

$\begin{array}{|c|c|c|}
\hline
	3&		1		&	2\\
\hline
2	&	2&	\SmallArray{1-2\\+3}\\
\hline
	1&			3	&2\\
\hline
\end{array}$
&$\begin{array}{|c|c|c|}
\hline
3	&			1	&	2\\
\hline
1	&\SmallArray{-1+2\\+3}	&	1\\
\hline
2	&			1	&	3\\
\hline
\end{array}$&
$\begin{array}{|c|c|c|}
\hline
	3&		2		&	1\\
\hline
	1&	2&	3\\
\hline
	2&		\SmallArray{1-2\\+3}		&	2\\
\hline
\end{array}$
&\phantom{$\begin{pmatrix}0\\0\\0\\0\\0\\0\\0\\0\\0\\0\\0\end{pmatrix}$}&
$\begin{array}{|c|c|c|}
\hline
2	&			\SmallArray{1-2\\+3}	&2	\\
\hline
	3&	2&	1\\
\hline
	1&		2		&3	\\
\hline
\end{array}$
&$\begin{array}{|c|c|c|}
\hline
2	&			3	&	1\\
\hline
3	&\SmallArray{1+2\\-3}	&	3\\
\hline
1	&			3	&	2\\
\hline
\end{array}$&
$\begin{array}{|c|c|c|}
\hline
2	&		3		&	1\\
\hline
\SmallArray{1-2\\+3}	&	2&	2\\
\hline
2	&		1		&	3\\
\hline
\end{array}$

\\

$\begin{array}{|c|c|c|}
\hline
3	&	1	&	2\\
\hline
1	&	\SmallArray{-1\\+2+2}	&	\SmallArray{1-2\\+3}\\
\hline
2	&	\SmallArray{1-2\\+3}	&	2\\
\hline
\end{array}$
&&
$\begin{array}{|c|c|c|}
\hline
2	&	\SmallArray{1-2\\+3}	&	2\\
\hline
\SmallArray{1-2\\+3}	&	\SmallArray{-1+2\\+2}	&	1\\
\hline
2	&	1	&3	\\
\hline
\end{array}$
&&
$\begin{array}{|c|c|c|}
\hline
2	&	\SmallArray{1-2\\+3}	&	2\\
\hline
3	&	\SmallArray{2+2\\-3}	&	\SmallArray{1-2\\+3}\\
\hline
1	&	3	&2	\\
\hline
\end{array}$
&&
$\begin{array}{|c|c|c|}
\hline
2	&	3	&1	\\
\hline
\SmallArray{1-2\\+3}	&	\SmallArray{2+2\\-3}	&	3\\
\hline
2	&	\SmallArray{1-2\\+3}	&2	\\
\hline
\end{array}$

\\

$\begin{array}{|c|c|c|}
\hline
3	&	1	&	2\\
\hline
1	&	2	&	3\\
\hline
2	&	3	&	1\\
\hline
\end{array}$
&\phantom{$\begin{pmatrix}0\\0\\0\\0\\0\\0\\0\\0\end{pmatrix}$}&
$\begin{array}{|c|c|c|}
\hline
1	&	3	&	2\\
\hline
3	&	2	&	1\\
\hline
2	&	1	&	3\\
\hline
\end{array}$
&$\begin{array}{|c|c|c|}
\hline
2	&	\SmallArray{1-2\\+3}	&	2\\
\hline
\SmallArray{1-2\\+3}	&	\SmallArray{-1+2+2\\+2-3}	&	\SmallArray{1-2\\+3}\\
\hline
2	&	\SmallArray{1-2\\+3}	&	2\\
\hline
\end{array}$&
$\begin{array}{|c|c|c|}
\hline
2	&	1	&	3\\
\hline
3	&	2	&	1\\
\hline
1	&	3	&	2\\
\hline
\end{array}$
&&
$\begin{array}{|c|c|c|}
\hline
2	&	3	&	1\\
\hline
1	&	2	&	3\\
\hline
3	&	1	&	2\\
\hline
\end{array}$

\\

$\begin{array}{|c|c|c|}
\hline
2	&	\SmallArray{1-2\\+3}	&2	\\
\hline
\SmallArray{1-2\\+3}	&	\SmallArray{2+2\\-3}	&3	\\
\hline
2	&	3	&	1\\
\hline
\end{array}$
&&
$\begin{array}{|c|c|c|}
\hline
	1&	3	&	2\\
\hline
3	&	\SmallArray{2+2\\-3}	&	\SmallArray{1-2\\+3}\\
\hline
2	&	\SmallArray{1-2\\+3}	&	2\\
\hline
\end{array}$
&&
$\begin{array}{|c|c|c|}
\hline
2	&	1	&	3\\
\hline
\SmallArray{1-2\\+3}	&	\SmallArray{-1+2\\+2}	&1\\
\hline
2	&	\SmallArray{1-2\\+3}	&	2\\
\hline
\end{array}$
&&
$\begin{array}{|c|c|c|}
\hline
2	&	\SmallArray{1-2\\+3}	&	2\\
\hline
1	&	\SmallArray{-1+2\\+2}	&	\SmallArray{1-2\\+3}\\
\hline
	3&		1&	2\\
\hline
\end{array}$

\\

$\begin{array}{|c|c|c|}
\hline
	2&		1		&3	\\
\hline
\SmallArray{1-2\\+3}	&2	&	2\\
\hline
	2&		3		&	1\\
\hline
\end{array}$
&$\begin{array}{|c|c|c|}
\hline
1	&			3	&	2\\
\hline
3	&\SmallArray{1+2\\-3}	&	3\\
\hline
2	&			3	&	1\\
\hline
\end{array}$&
$\begin{array}{|c|c|c|}
\hline
2	&		\SmallArray{1-2\\+3}		&	2\\
\hline
1	&	2&	3\\
\hline
3	&		2		&	1\\
\hline
\end{array}$
&\phantom{$\begin{pmatrix}0\\0\\0\\0\\0\\0\\0\\0\\0\\0\\0\end{pmatrix}$}&
$\begin{array}{|c|c|c|}
\hline
1	&		2		&	3\\
\hline
3	&	2&	1\\
\hline
	2&			\SmallArray{1-2\\+3}	&2	\\
\hline
\end{array}$
&$\begin{array}{|c|c|c|}
\hline
2	&			1	&	3\\
\hline
1	&\SmallArray{-1+2\\+3}	&	1\\
\hline
3	&			1	&	2\\
\hline
\end{array}$&
$\begin{array}{|c|c|c|}
\hline
1	&		3		&	2\\
\hline
2	&2	&	\SmallArray{1-2\\+3}\\
\hline
3	&		1		&	2\\
\hline
\end{array}$

\\

$\begin{array}{|c|c|c|}
\hline
1	&	2	&	3\\
\hline
3	&	1	&	2\\
\hline
2	&	3	&	1\\
\hline
\end{array}$
&&
$\begin{array}{|c|c|c|}
\hline
1	&	3	&	2\\
\hline
2	&	1	&	3\\
\hline
3	&	2	&	1\\
\hline
\end{array}$
&\phantom{$\begin{pmatrix}0\\0\\0\\0\\0\end{pmatrix}$}&
$\begin{array}{|c|c|c|}
\hline
2	&	1	&	3\\
\hline
1	&	3	&	2\\
\hline
3	&	2	&	1\\
\hline
\end{array}$
&&
$\begin{array}{|c|c|c|}
\hline
1	&	2	&	3\\
\hline
2	&	3	&	1\\
\hline
3	&	1	&	2\\
\hline
\end{array}$
\\

&&&$\begin{array}{|c|c|c|}
\hline
1	&			2	&	3\\
\hline
2	&\SmallArray{1-2\\+3}	&	2\\
\hline
3	&			2	&	1\\
\hline
\end{array}$&&&

\\

\CodeAfter 
\tikz \draw  (1-4) -- (2-1); 
\tikz \draw  (1-4) -- (2-3); 
\tikz \draw  (1-4) -- (2-5); 
\tikz \draw  (1-4) -- (2-7); 
%%%%%%%%%%%%%%%%%%%
\tikz \draw  (2-1) -- (3-1); 
\tikz \draw  (2-1) -- (3-2); 
\tikz \draw  (2-1) -- (3-5);
%%%
\tikz \draw  (2-3) -- (3-2); 
\tikz \draw  (2-3) -- (3-3); 
\tikz \draw  (2-3) -- (3-7); 
%%%
\tikz \draw  (2-5) -- (3-1); 
\tikz \draw  (2-5) -- (3-3); 
\tikz \draw  (2-5) -- (3-6); 
%%%
\tikz \draw  (2-7) -- (3-5); 
\tikz \draw  (2-7) -- (3-6); 
\tikz \draw  (2-7) -- (3-7); 
%%%%%%%%%%%%%%%%%%%
\tikz \draw  (3-1) -- (4-1); 
\tikz \draw  (3-1) -- (4-5); 
\tikz \draw  (3-2) -- (4-1); 
\tikz \draw  (3-2) -- (4-3); 
\tikz \draw  (3-3) -- (4-1); 
\tikz \draw  (3-3) -- (4-7); 
%%%
\tikz \draw  (3-5) -- (4-3); 
\tikz \draw  (3-5) -- (4-5); 
\tikz \draw  (3-6) -- (4-5); 
\tikz \draw  (3-6) -- (4-7); 
\tikz \draw  (3-7) -- (4-3); 
\tikz \draw  (3-7) -- (4-7); 
%%%%%%%%%%%%%%%%%%%
\tikz \draw  (4-1) -- (5-1); 
\tikz \draw  (4-3) -- (5-3); 
\tikz \draw  (4-5) -- (5-5); 
\tikz \draw  (4-7) -- (5-7); 
%%%
\tikz \draw  (4-1) -- (5-4); 
\tikz \draw  (4-3) -- (5-4); 
\tikz \draw  (4-5) -- (5-4); 
\tikz \draw  (4-7) -- (5-4); 
%%%%%%%%%%%%%%%%%%%
\tikz \draw  (5-1) -- (6-1); 
\tikz \draw  (5-3) -- (6-3); 
\tikz \draw  (5-5) -- (6-5); 
\tikz \draw  (5-7) -- (6-7); 
%%%
\tikz \draw  (5-4) -- (6-1); 
\tikz \draw  (5-4) -- (6-3); 
\tikz \draw  (5-4) -- (6-5); 
\tikz \draw  (5-4) -- (6-7); 
%%%%%%%%%%%%%%%%%%%
\tikz \draw  (6-1) -- (7-1); 
\tikz \draw  (6-1) -- (7-2); 
\tikz \draw  (6-1) -- (7-3); 
\tikz \draw  (6-3) -- (7-2); 
\tikz \draw  (6-3) -- (7-5); 
\tikz \draw  (6-3) -- (7-7); 
%%%
\tikz \draw  (6-5) -- (7-1); 
\tikz \draw  (6-5) -- (7-5); 
\tikz \draw  (6-5) -- (7-6); 
\tikz \draw  (6-7) -- (7-3); 
\tikz \draw  (6-7) -- (7-6); 
\tikz \draw  (6-7) -- (7-7); 
%%%%%%%%%%%%%%%%%%%
\tikz \draw  (7-1) -- (8-1); 
\tikz \draw  (7-1) -- (8-5); 
\tikz \draw  (7-2) -- (8-1); 
\tikz \draw  (7-2) -- (8-3); 
\tikz \draw  (7-3) -- (8-3); 
\tikz \draw  (7-3) -- (8-5); 
%%%
\tikz \draw  (7-5) -- (8-1); 
\tikz \draw  (7-5) -- (8-7); 
\tikz \draw  (7-6) -- (8-5); 
\tikz \draw  (7-6) -- (8-7); 
\tikz \draw  (7-7) -- (8-3); 
\tikz \draw  (7-7) -- (8-7); 
%%%%%%%%%%%%%%%%%%%
\tikz \draw  (9-4) -- (8-1); 
\tikz \draw  (9-4) -- (8-3); 
\tikz \draw  (9-4) -- (8-5); 
\tikz \draw  (9-4) -- (8-7); 
\end{NiceTabular}\end{center}
\caption{The Hasse diagram of  $\mathcal{C}_3$.}
\end{figure}
\restoregeometry

\begin{theorem}\label{partial sum}
   Let $A = \Xi^{-1}(C)$, where $C \in \mathcal{C}_n$ for some $n \geq 1$ and, for $i$ and $j \in [n]$, let $m_{ij} = \min(i,j,n-i+1,n-j+1)$. Then, for all $i,j,k \in [n]$,
    \[1-m_{ij} \:\:\:\:\le\:\:\:\: \sum_{x=1}^kA_{ijx},\:\:\:\:\sum_{x=1}^k A_{ixj},\:\:\:\:\sum_{x=1}^k A_{xij} \:\:\:\:\le\:\:\:\: m_{ij}.\]
\end{theorem}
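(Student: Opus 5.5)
The plan is to express each partial line sum as a first difference of the corner-sum matrix of a single plane of $A$. Then Lemma~\ref{iff pre-image of Cn} sandwiches that difference between $\Sigma(J_n)$ and $\Sigma(I_n)$, and maximising over $k$ should give $m_{ij}$. The three statements are related by permuting the roles of the indices, so I treat only $\sum_{x=1}^k A_{ijx}$ and invoke symmetry for the other two.

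\textbf{Step 1: the identity.} As in the proof of Lemma~\ref{line sum},
\[\sum_{x=1}^k A_{ijx} = C_{i,j,k}-C_{i-1,j,k}-C_{i,j-1,k}+C_{i-1,j-1,k}.\]
Fix the first index and set $D_{b,c}=C_{i,b,c}-C_{i-1,b,c}$, which is $\Sigma$ of the plane of $A$ with first index $i$. The identity then reads $\sum_{x\le k}A_{ijx}=D_{j,k}-D_{j-1,k}$. By Lemma~\ref{iff pre-image of Cn}, $\max(0,b+c-n)\le D_{b,c}\le\min(b,c)$. Hence
\[\max(0,j+k-n)-\min(j-1,k)\;\le\;\sum_{x\le k}A_{ijx}\;\le\;\min(j,k)-\max(0,j+k-1-n).\]
Grouping the same four terms the other way, with the plane of fixed second index $E_{a,c}=C_{a,j,c}-C_{a,j-1,c}$, gives the identical pair of bounds with $j$ replaced by $i$.

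\textbf{Step 2: uniform bounds in $k$.} I will show these $k$-dependent bounds imply $\le\min(j,n-j+1)$ and $\ge 1-\min(j,n-j+1)$ for every $k\in[n]$. Combining with the $i$-version then gives exactly $1-m_{ij}\le\sum_{x\le k}A_{ijx}\le m_{ij}$.

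For the upper bound I split on $k$.
\begin{itemize}
\item If $k\le n+1-j$, the bound is $\min(j,k)\le\min(j,n+1-j)$.
\item If $k>n+1-j$, it is $\min(j,k)-(j+k-1-n)$. This is at most $n+1-k<j$, and also at most $j-(j+k-1-n)=n+1-k\le n+1-j$ in the relevant range.
\end{itemize}

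For the lower bound I again split on $k$.
\begin{itemize}
\item If $k\le n-j$, the bound is $-\min(j-1,k)\ge-\min(j-1,n-j)$.
\item If $k>n-j$, the bound is $j+k-n-\min(j-1,k)$. When $k\ge j-1$ this equals $k-n+1$, which exceeds both $1-j$ and $j-n$. When $k<j-1$ it equals $j-n$, which is at least $1-j$ since $k>n-j$ forces $j-n>-k>1-j$.
\end{itemize}
In all cases the lower bound is at least $-\min(j-1,n-j)=1-\min(j,n-j+1)$.

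\textbf{Main obstacle.} The only delicate part is this case bookkeeping in Step 2, in particular the boundary cases $k=n+1-j$ and $k=j-1$. I would sanity-check it against Lemma~\ref{minimal_element} and the extremal element~\eqref{eq:max}, which should attain the bounds.

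\textbf{Step 3: other directions.} The statements for $\sum_x A_{ixj}$ and $\sum_x A_{xij}$ follow by the same argument after permuting coordinates. The definition of $\mathcal C_n$ and Lemma~\ref{iff pre-image of Cn} are symmetric in the three directions.
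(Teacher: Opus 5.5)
Your proof is correct and follows the paper's route. It uses the same identity $\sum_{x\le k}A_{ijx}=C_{i,j,k}-C_{i-1,j,k}-C_{i,j-1,k}+C_{i-1,j-1,k}$ and the same two groupings to get $k$-dependent bounds in $j$ and in $i$, then splits into cases on $k$. Bounding each one-variable expression by $\min(j,n+1-j)$ for every $j$ and then combining is slightly cleaner than the paper's joint case analysis over $i,j\le\frac{n+1}{2}$ followed by an appeal to symmetry. One small slip: when $k>n+1-j$, the bound $n+1-j$ comes from $\min(j,k)\le k$, i.e.\ $k-(j+k-1-n)=n+1-j$, not from $j-(j+k-1-n)=n+1-k$ as you wrote; the conclusion is unaffected.
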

\begin{proof}
    Let $\delta_{ijk} = \sum\limits_{x = 1}^{k}A_{ijx}$.
    \[C_{i,j,k} = \sum_{x,y,z = 1}^{i,j,k}A_{xyz} \implies \delta_{ijk} = C_{i,j,k}-C_{i-1,j,k}-C_{i,j-1,k}+C_{i-1,j-1,k}\]
    By definition of a corner-sum hypermatrix, $C_{i,j,k}-C_{i-1,j,k} \in \{\max(0,j+k-n),\dots,\min(j,k)\}$ and $C_{i,j-1,k}-C_{i-1,j-1,k} \in \{\max(0,j+k-n-1),\dots,\min(j-1,k)\}$. 
    
    Therefore, since $\delta_{ijk} = (C_{i,j,k}-C_{i-1,j,k})-(C_{i,j-1,k}-C_{i-1,j-1,k})$,
    \[\max(0,j+k-n)-\min(j-1,k) \le \delta_{ijk} \le \min(j,k)-\max(0,j+k-n-1).\]
    Similarly, since $\delta_{ijk} = (C_{i,j,k}-C_{i,j-1,k})-(C_{i-1,j,k}-C_{i-1,j-1,k})$,
    \[\max(0,i+k-n)-\min(i-1,k) \le \delta_{ijk} \le \min(i,k)-\max(0,i+k-n-1).\]

    Combining these gives the following inequalities.
    \[\delta_{ijk} \le \min\big(\min(j,k)-\max(0,j+k-n-1), \min(i,k)-\max(0,i+k-n-1)\big)\]
    \[\delta_{ijk} \ge \max\big(\max(0,j+k-n)-\min(j-1,k), \max(0,i+k-n)-\min(i-1,k)\big)\]

    Therefore
    \[\max_{1\le k \le n}(\delta_{ijk}) = \max_{1\le k \le n}\Big(\min\big(\min(j,k)-\max(0,j+k-n-1), \min(i,k)-\max(0,i+k-n-1)\big)\Big).\]

    Fix $i, j \le \frac{n+1}{2}$ and note $k \le n+1-\max(i,j)$ implies $j+k-n-1 \le 0$ and $i+k-n-1 \le 0$.
    
    If $1 \le k \le \min(i,j)$:
    \[\max_{1\le k \le \min(i,j)}(\delta_{ijk}) = \max_{1\le k \le \min(i,j)}\Big(\min\big(k-0, k-0\big)\Big) = \max_{1\le k \le \min(i,j)}(k) = \min(i,j)\]
    
    If $\min(i,j) \le k \le \max(i,j)$:
    \[\max_{\min(i,j)\le k \le \max(i,j)}(\delta_{ijk}) = \max_{\min(i,j)\le k \le \max(i,j)}\Big(\min\big(\min(j,k)-0, \min(i,k)-0\big)\Big)
    %\min\big(k-0, \min(i,j)-0\big)\Big) %= \max_{\min(i,j)\le k \le \max(i,j)}\big(\min(i,j)\big) 
    = \min(i,j)\]
    
    If $\max(i,j) \le k \le n+1-\max(i,j)$:
    \[\max_{\max(i,j)\le k \le n+1-\max(i,j)}(\delta_{ijk}) = \max_{\max(i,j)\le k \le n+1-\max(i,j)}\Big(\min\big(j-0, i-0\big)\Big)%= \max_{j\le k \le n+1-j}(i) 
    = \min(i,j)\]
    
   If $n+1-\max(i,j) \le k \le n+1-\min(i,j)$:
   % Your red correction above is incorrect. Whichever of $i$ or $j$ is minimal will never have the $\max(0,\dots)$ term equal anything but 0, and so you do not get cancellation of all $i$ terms \textbf{and} all $j$ terms. Hence the asymmetry in my alternative solution below. Note that after the second equal sign, rather than dealing with the $j$ situation on the left and the $i$ situation on the right, I have switched it to deal with whichever is minimal on the left and whichever is maximal on the right.
    \begin{equation*}
    \begin{aligned}
    \max_{n+1-\max(i,j)\le k \le n+1-\min(i,j)}(\delta_{ijk}) =\max_{\min(i,j)\le k \le \max(i,j)}\Big(\min\big( &\min(j, n+1-k) - \max(0, j-k),\\
    &\min(i, n+1-k) - \max(0, i-k) \big)\Big) \\
    = \max_{\min(i,j)\le k \le \max(i,j)}\Big(\min\big( \min(i,j)-0,& \max(i,j) - (\max(i,j) - k) \big)\Big) \\ = \max_{\min(i,j)\le k \le \max(i,j)}\Big(\min\big( \min(i,j), k \big)\Big)& = \min(i,j)
    \end{aligned}
    \end{equation*}
    
    If $n+1-\min(i,j) \le k \le n$:
    \[\max_{n+1-\min(i,j)\le k \le n}(\delta_{ijk}) = \max_{n+1-\min(i,j)\le k \le n}\big(n+1-k\big) = n+1-(n+1-\min(i,j)) = \min(i,j)\]

    So $\delta_{ijk} \le \min(i,j)$ for all $i, j \le \frac{n+1}{2}$, and therefore $\delta_{ijk} \le m_{ij}$ for all $i, j \le n$ by symmetry.

    Now fix $i, j \le \frac{n}{2}$, and note $k \le n-\min(i,j)$ implies $i+k-n \le 0$ and $j+k-n \le 0$. Consider
    \[\min_{1 \le k \le n}(\delta_{ijk}) = \min_{1 \le k \le n}\Big( \max\big(\max(0,j+k-n)-\min(j-1,k), \max(0,i+k-n)-\min(i-1,k)\big) \Big).\]

    If $1 \le k \le \min(i,j)-1$:
    \[\min_{1 \le k \le \min(i,j)-1}(\delta_{ijk}) = \min_{1 \le k \le \min(i,j)-1}\Big( \max\big(0-k, 0-k\big) \Big) = -\min(i,j)+1\]

    If $\min(i,j) \le k \le \max(i,j)-1$:
    \begin{equation*}
    \begin{aligned}
    \min_{\min(i,j) \le k \le \max(i,j)-1}&(\delta_{ijk}) = \min_{\min(i,j) \le k \le \max(i,j)-1}\Big( \max\big(0-\min(j-1,k), 0-\min(i-1,k)\big) \Big) \\
    & = \min_{\min(i,j) \le k \le \max(i,j)-1}\Big( \max\big(0-k, 0-(\min(i,j)-1)\big) \Big) = -\min(i,j)+1 
    \end{aligned}
    \end{equation*}

    If $\max(i,j) \le k \le n-\max(i,j)$:
    \[\min_{\max(i,j) \le k \le n-\max(i,j)}(\delta_{ijk}) = \min_{\max(i,j) \le k \le n-\max(i,j)}\Big( \max\big(0-(j-1), 0-(i-1)\big) \Big) = -\min(i,j)+1\]

 If $n-\max(i,j) \le k \le n-\min(i,j)$:
    \[\min_{n-\max(i,j)\le k \le n-\min(i,j)}(\delta_{ijk}) = \min_{n-\max(i,j) \le k \le n-\min(i,j)}\Big( \max\big(0-(j-1), 0-(i-1)\big) \Big) = -\min(i,j)+1\]

% the one immediately below resolves max(0,j+k-n) into 
 % If $n-\max(i,j) \le k \le n-\min(i,j)$:
%  \[\min_{n-\max(i,j)\le k \le n-\min(i,j)}(\delta_{ijk}) = \min_{n-\max(i,j) \le k \le n-\min(i,j)}\Big( \max\big(k-n+1, -\min(i,j)+1\big) \Big) = -\min(i,j)+1\]

    If $n-\min(i,j) \le k \le n$:
    \[\min_{n-\min(i,j) \le k \le n}(\delta_{ijk}) = \min_{n-\min(i,j) \le k \le n}\big( k-n+1\big) = -\min(i,j)+1\]

    So $\delta_{ijk} \ge -\min(i,j)+1$ for all $i,j \le \frac{n}{2}$, and therefore $\delta_{ijk} \ge -m_{ij}+1$ for all $i,j \le n$ by symmetry.
\end{proof}

\begin{corollary}\label{partial sum corollary}

    Let $A = \Xi^{-1}(C)$, where $C \in \mathcal{C}_n$ for some $n \geq 1$ and, for $i$ and $j \in [n]$, let $m_{ij} = \min(i,j,n-i+1,n-j+1)$. Then, for all $i,j,k \in [n]$,
    \[1-m_{ij} \:\:\:\:\le\:\:\:\: \sum_{x=k}^n A_{ijx},\:\:\:\:\sum_{x=k}^n A_{ixj},\:\:\:\:\sum_{x=k}^n A_{xij} \:\:\:\:\le\:\:\:\: m_{ij}.\]
\end{corollary}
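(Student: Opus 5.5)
The corollary follows directly by combining Lemma~\ref{line sum} with Theorem~\ref{partial sum}; the tail sums are just complements of the initial partial sums. Fix $i,j\in[n]$ and $k\in[n]$, and consider first the vertical line. For $k=1$ the tail sum $\sum_{x=1}^n A_{ijx}$ equals $1$ by Lemma~\ref{line sum}. Since $m_{ij}\ge 1$, we have $1-m_{ij}\le 0<1\le m_{ij}$, so the bounds hold. For $2\le k\le n$ we write
\[\sum_{x=k}^n A_{ijx} \;=\; \sum_{x=1}^n A_{ijx}-\sum_{x=1}^{k-1}A_{ijx} \;=\; 1-\sum_{x=1}^{k-1}A_{ijx},\]
again using Lemma~\ref{line sum}.

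Applying Theorem~\ref{partial sum} with $k-1\in[n]$ in place of $k$ gives
\[1-m_{ij}\le \sum_{x=1}^{k-1}A_{ijx}\le m_{ij}.\]
Negating and adding $1$ yields
\[1-m_{ij}\le 1-\sum_{x=1}^{k-1}A_{ijx}\le 1-(1-m_{ij})=m_{ij}.\]
The interval $[1-m_{ij},m_{ij}]$ is symmetric under $t\mapsto 1-t$, which is exactly why the same bounds transfer from initial sums to tail sums.

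The row and column versions, $\sum_{x=k}^n A_{ixj}$ and $\sum_{x=k}^n A_{xij}$, are handled identically. We use the corresponding line-sum statements of Lemma~\ref{line sum} and the corresponding two inequalities of Theorem~\ref{partial sum}.

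There is no real obstacle here. The only points needing care are the boundary case $k=1$, where $k-1=0$ falls outside the range of Theorem~\ref{partial sum}, and confirming that $m_{ij}\ge1$ so that this case still satisfies the bounds.
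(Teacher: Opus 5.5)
Your proof is correct and follows the paper's argument exactly: write the tail sum as $1-\sum_{x=1}^{k-1}A_{ijx}$ using Lemma~\ref{line sum}, then apply Theorem~\ref{partial sum} together with the symmetry of $[1-m_{ij},m_{ij}]$ under $t\mapsto 1-t$. Your separate handling of $k=1$ is a detail the paper leaves implicit: there $k-1=0$ lies outside the range of Theorem~\ref{partial sum}, but the bounds hold trivially because $m_{ij}\ge 1$.
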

\begin{proof}
    Since $1-m_{ij} \le \sum\limits_{x=1}^{k-1} A_{ijx} \le m_{ij}$ by Theorem \ref{partial sum} and $\sum\limits_{x=1}^n A_{ijx} = 1$ by Lemma \ref{line sum},
    \[1 - m_{ij} \le \sum_{x=1}^n A_{ijx} - \sum_{x=1}^{k-1} A_{ijx} \le 1 - (1-m_{ij}) = m_{ij}.\]
Each remaining inequality is proven similarly.
\end{proof}

Adding the entries within each cell of an ASHM $A\in \Xi^{-1}(\C_n)$ results in its corresponding alternating sign hypermatrix Latin-like square $L(A)$, see \cite{brualdidahl}. If $A\in\Xi^{-1}(\C_n)$ is not an ASHM, the resulting object is some other related Latin-like square.
\[L\left(\begin{array}{|c|c|c|}
\hline
1	&			2	&	3\\
\hline
2	&\SmallArray{1-2\\+3}	&	2\\
\hline
3	&			2	&	1\\
\hline
\end{array}\right) = \begin{array}{|c|c|c|}
\hline
1	&			2	&	3\\
\hline
2	&2	&	2\\
\hline
3	&			2	&	1\\
\hline
\end{array}
,\hspace{1cm}
L\left(\begin{array}{|c|c|c|}
\hline
2	&	\SmallArray{1-2\\+3}	&	2\\
\hline
\SmallArray{1-2\\+3}	&	\SmallArray{-1+2+2\\+2-3}	&	\SmallArray{1-2\\+3}\\
\hline
2	&	\SmallArray{1-2\\+3}	&	2\\
\hline
\end{array}\right) = \begin{array}{|c|c|c|}
\hline
2	&	2	&	2\\
\hline
2	&	2	&	2\\
\hline
2	&	2	&	2\\
\hline
\end{array}\]

% Made a command so we could change the symbol later if we want. Ltwo because it's the 2d Latin lattice
The lattice $\Ltwo_n$ containing these Latin-like squares is the lattice that results from applying the min/max corner-sum matrix procedure described at the end of Subsection \ref{cornersum subsection} to the poset of $n \times n$ Latin squares under the dual $\preceq'$ of the partial order described in \cite{Latin bruhat}.  Note that $\Ltwo_n = \Xi^{-1}(\C_n)$ for $n\le3$ because our definition of $\preceq_B$ coincides with $\preceq'$ for $n\le3$.

For $n\ge4$, there exist pairs of distinct ASHMs of order $n$ whose corresponding Latin-like squares coincide % $A$ and $B$ for which $L(A)=L(B)$ 
\cite{ASHM-decomp}. This means that for $n\ge4$, there are fewer elements in $\Ltwo_n$ than in $\Xi^{-1}(\C_n)$. Therefore the corner-sum Latin lattice $\Xi^{-1}(\C_n)$ is not a subset of the Latin-like square lattice $\Ltwo_n$. In addition, Example \ref{Comparing Bruhat definitions} outlines Latin squares which are comparable under $\preceq'$ and are incomparable under $\preceq_B$. Therefore the Latin-like lattice $\Ltwo_n$ is also not a subset of the corner-sum Latin lattice $\Xi^{-1}(\C_n)$. However, $\Ltwo_n$ is the image of $\C_n$ under the Latin-like square operation $L$ described above. Elements of $\C_n$ are identified and cover in $\Ltwo_n$ the union of the elements they cover in $\C_n$.

As with the lattice of corner-sum hypermatrices, $\Ltwo_n$ is not the Dedekind-MacNeille completion of $(\L_n, \preceq')$ for all $n \in \mathbb{N}$. For example, the sum of the planes of the minimal element $M_4\in\C_4$ is
\[M = \begin{pmatrix}
    4&7&9&10\\
    7&14&17&20\\
    9&17&24&30\\
    10&20&30&40
\end{pmatrix}.\]
$M$ is also the corner-sum of the minimal element of $(\L_{4}, \preceq')$. The sum of the planes of $U_4$ is 
\[X = \begin{pmatrix}
    4&7&9&10\\
    7&13&17&20\\
    9&17&24&30\\
    10&20&30&40
\end{pmatrix}.\]
This is the entrywise max of the corner-sums of the following Latin squares $A$ and $B$, and therefore is an element of $\Ltwo_n$.
\[A = \begin{array}{|c|c|c|c|}
    \hline
    4&3&2&1\\
    \hline
    3&1&4&2\\
    \hline
    2&4&1&3\\
    \hline
    1&2&3&4\\
    \hline
\end{array},\quad
B = \begin{array}{|c|c|c|c|}
    \hline
    4&2&1&3\\
    \hline
    3&4&2&1\\
    \hline
    2&1&3&4\\
    \hline
    1&3&4&2\\
    \hline
\end{array}\]
As $X$ differs from the minimal element $M$ by 1 in just one entry, it covers $M$ in $\Ltwo_4$ and is therefore join-irreducible. The pre-image of $X$ under the corner-sum operation is
\[\Sigma^{-1}(X) = \begin{array}{|c|c|c|c|}
    \hline
    4&2&1&3\\
    \hline
    3&3&2&2\\
    \hline
    2&2&3&3\\
    \hline
    1&2&3&4\\
    \hline
\end{array},\]
which is not a Latin square. Since $\Ltwo_4$ contains a join-irreducible which is not a Latin square, it is not the Dedekind-MacNeille completion of $(\L_4,\preceq')$.

\section{Monotone hypertriangles}\label{sec:MH}

 In this section, we define a 3-dimensional generalisation of \emph{monotone triangles}, prove that they are in bijection with corner-sum hypermatrices, and show that they also encode the Bruhat order by entrywise comparison.

 A monotone triangle $M$ of order $n$ is a triangular array of integers from $[n]$, arranged so that the entries of each row occur in strictly increasing order, and the entries in each north-east and south-east line are weakly increasing. We denote the $(i,j)$-entry of a monotone triangle $M$ of order $n$ with $M_{i,j}$, where we label rows 1 through $n$ from top to bottom and entries in each row from left to right. The weakly increasing condition is often reformulated as the following \emph{interlacing} condition on $M_{i,j}$, for all $i,j \in [n-1]$.
\[M_{i+1,j} \le M_{i,j} \le M_{i+1,j+1}\]

For the purpose of generalising to dimension 3, we note that a monotone triangle $M$ of order $n$ can equivalently be defined as a triangular array of integers from $[n]$ satisfying the following for all $i,j\in[n]$, where $M_{i,*}$ is row $i$ of $M$.
\begin{itemize}
    \item $M_{i,*}$ contains $i$ entries, arranged in (strictly) increasing order.
    \item Each $j\in[n]$ occurs as an entry between $0$ and $1$ times in $M_{i,*}$.
    \item From one row to the next, the increase in the number of entries $\le j$ is between $0$ and $1$.
\end{itemize}

There exists a bijection between monotone triangles and ASMs of order $n$ \cite{monotone bib}. To obtain a monotone triangle from an ASM $A$, first produce the matrix for which every entry is the sum of the entries in the corresponding column of $A$, above and including the entry itself. The positions of $+$ entries in each row of the resulting matrix indicate the entries of the corresponding row of the monotone triangle.

\[A = \begin{pmatrix}
&+&&\\
+&-&+&\\
&+&&\\
&&&+
\end{pmatrix}
\hspace{0.3cm}\rightarrow\hspace{0.3cm}
\begin{pmatrix}
&+&&\\
+&&+&\\
+&+&+&\\
+&+&+&+
\end{pmatrix}
\hspace{0.3cm}\rightarrow\hspace{0.3cm}
\begin{array}{ccccccc}
&\hspace{-0.2cm}&\hspace{-0.2cm}&\hspace{-0.2cm}2&\hspace{-0.2cm}&\hspace{-0.2cm}&\hspace{-0.2cm}\\
&\hspace{-0.2cm}&\hspace{-0.2cm}1&\hspace{-0.2cm}&\hspace{-0.2cm}3&\hspace{-0.2cm}&\hspace{-0.2cm}\\
&\hspace{-0.2cm}1&\hspace{-0.2cm}&\hspace{-0.2cm}2&\hspace{-0.2cm}&\hspace{-0.2cm}3&\hspace{-0.2cm}\\
1&\hspace{-0.2cm}&\hspace{-0.2cm}2&\hspace{-0.2cm}&\hspace{-0.2cm}3&\hspace{-0.2cm}&\hspace{-0.2cm}4
\end{array}\]

We call this intermediate matrix the \emph{partial-sum matrix} $p(A)$, and note that $p(A)$ is always a $(0,1)$-matrix for ASM $A$, since any $-1$ entry of $A$ has a $+1$ above it as the previous non-zero entry. We now define the corresponding objects for ASHMs and elements of $\Xi^{-1}(\C_n)$ more broadly.

\begin{definition}\label{monotone def}
A \emph{monotone hypertriangle} $M$ of order $n$ is a 3-dimensional array of integers from $[n]$, satisfying the following for all $i,j,k \in [n]$, where $M_{i,*,k}$ is row $i$ of plane $k$ of $M$.
\begin{enumerate}
    \item $M_{i,*,k}$ contains $ik$ entries which are arranged in (weakly) increasing order.
    \item Each $j\in[n]$ occurs as an entry between $\max(0,i+k-n)$ and $\min(i,k)$ times in $M_{i,*,k}$.
    \item From one row to the next in plane $k$ of $M$, the increase in the number of entries $\le j$ is between $\max(0,j+k-n)$ and $\min(j,k)$.
    \item From one plane to the next in row $i$ of $M$, the increase in the number of entries $\le j$ is between $\max(0,i+j-n)$ and $\min(i,j)$.
\end{enumerate}
\end{definition}

\begin{example}\label{monotone hyp}
The following is a monotone hypertriangle $M$.
\[\arraycolsep=2pt \begin{array}{ccccc}
	&	&1	&	&	\\
	&1	&	&2	&	\\
1	&	&2	&	&3	\\
\end{array}\nearrow
\begin{array}{ccccccccccc}
&&&&1&&2&&&&\\
&&1&&1&&2&&3&&\\
1&\phantom{0}&1&\phantom{0}&2&\phantom{0}&2&\phantom{0}&3&\phantom{0}&3\\
\end{array}\nearrow
\begin{array}{ccccccccccccccccc}
&&&&&&1&&2&&3&&&&&&\\
&\phantom{0}&&1&&1&&2&&2&&3&&3&&\phantom{0}&\\
1&&1&&1&&2&&2&&2&&3&&3&&3\\
\end{array}\]
In plane 2 of a monotone hypertriangle of order 3, the increase in the number of entries $\le 2$ from one row to the next is between $\max(0,2+2-3) = 1$ and $\min(2,2) = 2$. The corresponding entries of $M$ are indicated below in blue.
\[\arraycolsep=2pt \begin{array}{ccccc}
	&	&1	&	&	\\
	&1	&	&2	&	\\
1	&	&2	&	&3	\\
\end{array}\nearrow
\begin{array}{ccccccccccc}
&&&&{\color{blue}1}&&{\color{blue}2}&&&&\\
&&{\color{blue}1}&&{\color{blue}1}&&{\color{blue}2}&&3&&\\
{\color{blue}1}&\phantom{0}&{\color{blue}1}&\phantom{0}&{\color{blue}2}&\phantom{0}&{\color{blue}2}&\phantom{0}&3&\phantom{0}&3\\
\end{array}\nearrow
\begin{array}{ccccccccccccccccc}
&&&&&&1&&2&&3&&&&&&\\
&\phantom{0}&&1&&1&&2&&2&&3&&3&&\phantom{0}&\\
1&&1&&1&&2&&2&&2&&3&&3&&3\\
\end{array}\]
\end{example}

We note the following necessary (but not sufficient) interlacing conditions.

\begin{theorem}\label{interlacing}
Let $M$ be a monotone hypertriangle and $m=M_{i,j,k}$  for $i,k \in[n]$ and $j \in [ik]$. Then
\begin{itemize}
    \item $M_{i+1,j+\max(0,m+k-n),k} \le M_{i,j,k} \le M_{i+1,j+\min(m-1,k),k}$
    \item $M_{i,j+\max(0,i+m-n),k+1} \le M_{i,j,k} \le M_{i,j+\min(i,m-1),k+1}$
\end{itemize}
\end{theorem}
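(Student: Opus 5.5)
The plan is to convert the statement about positions in sorted rows into a statement about counting functions, and then read off the bounds from conditions 3 and 4 of Definition~\ref{monotone def}. For a weakly increasing row $R = M_{i,*,k}$ and a value $v\in[0,n]$, write $N_{i,k}(v)$ for the number of entries of $R$ that are $\le v$, with $N_{i,k}(0)=0$. Sortedness gives the standard equivalence
\[
M_{i,p,k} \le v \iff p \le N_{i,k}(v),
\]
so an inequality between entries at given positions is the same as an inequality between a position and a count. For each value $m$, conditions 3 and 4 say that the increments $N_{i+1,k}(m)-N_{i,k}(m)$ and $N_{i,k+1}(m)-N_{i,k}(m)$ lie in $[\max(0,m+k-n),\min(m,k)]$ and $[\max(0,i+m-n),\min(i,m)]$ respectively.

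For the first lower bound, let $m=M_{i,j,k}$. Then $j\le N_{i,k}(m)$, so
\[
j+\max(0,m+k-n)\le N_{i,k}(m)+\max(0,m+k-n)\le N_{i+1,k}(m).
\]
By the equivalence, $M_{i+1,\,j+\max(0,m+k-n),\,k}\le m$, which is the required bound. The second lower bound follows in the same way, using condition 4 in place of condition 3. Note that $j+\max(\cdot)\le N_{i+1,k}(m)\le (i+1)k$, so the index used is a genuine position in the row.

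For the upper bounds, use the count at level $m-1$. Since $M_{i,j,k}=m$, we have $j>N_{i,k}(m-1)$, that is, $N_{i,k}(m-1)\le j-1$. Applying condition 3 at value $m-1$ gives
\[
N_{i+1,k}(m-1)\le N_{i,k}(m-1)+\min(m-1,k)\le j-1+\min(m-1,k).
\]
Hence position $j+\min(m-1,k)$ in row $i+1$ exceeds $N_{i+1,k}(m-1)$, so the entry there is $\ge m$, which is the claim. This explains the $m-1$ appearing in the statement. The plane direction is handled identically with $\min(i,m-1)$.

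The main obstacle is that the upper-bound position $j+\min(m-1,k)$ may exceed the row length $(i+1)k$. I would address this by adopting the convention that out-of-range entries are $+\infty$ on the right and $-\infty$ on the left, or equivalently by reading the statement as holding whenever the indices exist. I would check the lower-bound range explicitly, as done above. The boundary case $m=1$ also needs care: there $N(0)=0$ is forced, and the increment bound at value $0$ is $[0,0]$, which stays consistent with the formula.
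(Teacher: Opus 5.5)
Your proof is correct and follows the same route as the paper. In both, the lower bounds come from the minimum increase in the number of entries $\le m$, and the upper bounds come from the maximum increase in the number of entries $\le m-1$; your equivalence $M_{i,p,k}\le v \iff p\le N_{i,k}(v)$ makes the paper's informal ``guaranteed position'' argument precise.

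The out-of-range obstacle you flag does not actually arise. Since $j\le ik$ and $\min(m-1,k)\le k$, we get $j+\min(m-1,k)\le (i+1)k$, and likewise $j+\min(i,m-1)\le i(k+1)$. So no convention is needed beyond taking $i<n$ (respectively $k<n$), so that row $i+1$ (respectively plane $k+1$) exists.
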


\begin{proof}
    We first consider the greatest $x\in\mathbb{Z}$ for which it is guaranteed that $M_{i+1,j+x,k}\le m$, since it follows that $M_{i+1,j+y,k}\le m$ for any $y\le x$. The number of entries $\le m$ in row $i+1$ is at least $\max(0, m+k-n)$ more than the number in row $i$, so any $x > \max(0, m+k-n)$ may satisfy $M_{i+1,j+x,k} > m$. Therefore $x = \max(0, m+k-n)$. %Worst case scenario is that $j$ is the greatest such that $M_{i,j,k} \le m$. 

    Similarly, we now consider the least $x\in\mathbb{Z}$ for which it is guaranteed that $m \le M_{i+1,j+x,k}$, since it follows that $m \le M_{i+1,j+y,k}$ for any $y \ge x$. The number of entries $\le m-1$ in row $i+1$ is at most $\min(m-1,k)$ more than the number in row $i$, so any $x < \min(m-1,k)$ may satisfy $m > M_{i+1,j+x,k}$. Therefore $x = \min(m-1,k)$. 
    
    The second chain of inequalities can be proved similarly. %Worst case scenario is that $j$ is the least such that $M_{i,j,k} \ge m$.
\end{proof}

For example, in plane $k=n-1$ of a monotone hypertriangle of order $n$, the first interlacing condition for $m=M_{i,j,k}=2$ from one row to the next is $M_{i+1,j+1} \le M_{i,j} \le M_{i+1,j+1}$. Equivalently, $M_{i,j} = M_{i+1,j+1}$ for all $M_{i,j}=2$ in plane $k=n-1$ of a monotone hypertriangle of order $n$. This is indicated below in blue for the monotone hypertriangle of Example \ref{monotone hyp}. For $m=3$, the condition is $M_{i+1,j+1} \le M_{i,j} \le M_{i+1,j+2}$. This is indicated below in red.
\[\begin{array}{ccccccccccc}
&&&&1&&{\color{blue}2}&&&&\\
&&1&&1&&{\color{blue}2}&&{\color{red}3}&&\\
1&\phantom{0}&1&\phantom{0}&2&\phantom{0}&{\color{blue}2}&\phantom{0}&{\color{red}3}&\phantom{0}&{\color{red}3}\\
\end{array}\]

Note that for $m>1$, the first (resp. second) interlacing condition of Theorem \ref{interlacing} for $k=1$ (resp. $i=1$) coincides with the interlacing condition of monotone triangles.

 The following are the first 4 rows and first 4 planes of a hypertriangle of order 5. If the fifth rows and fifth plane are completed to match the boundary conditions of a monotone hypertriangle, the resulting hypertriangle satisfies all interlacing conditions of Theorem \ref{interlacing}, but is not a monotone hypertriangle.  So these conditions are necessary, but not sufficient, to be a monotone hypertriangle.
\[\arraycolsep=1.4pt\def\arraystretch{1}
\begin{array}{ccccccc}
    &&&4&&&\\
    &&3&&5&&\\
    &1&&3&&5&\\
    1&&2&&4&&5
\end{array}\nearrow
\begin{array}{ccccccccccccccc}
&&&&&&3&&5&&&&&&\\
&&&&3&&3&&5&&5&&&&\\
&&1&&1&&3&&3&&5&&5&&\\
1&&1&&2&&3&&4&&4&&5&&5
\end{array}\nearrow
\begin{array}{ccccccccccccccccccccccc}
&&&&&&&&&2&&3&&5&&&&&&&&&\\
&&&&&&1&&2&&3&&3&&5&&5&&&&&&\\
&&&1&&1&&2&&3&&3&&4&&4&&5&&5&&&\\
1&\phantom{1}&1&&1&&2&&2&&3&&3&&4&&4&&4&&5&\phantom{1}&5\\
\end{array}\]\[\nearrow
\begin{array}{cccccccccccccccc}
 & & & & & &1&2&4&5& & & & & & \\
 & & & &1&1&2&3&4&4&5&5& & & & \\
 & &1&1&1&2&2&3&3&4&4&5&5&5& &\\
1&1&1&2&2&2&3&3&3&3&4&4&4&5&5&5
\end{array}
\]
Indeed, while conditions 1., 2.\ and 4.\ of Definition~\ref{monotone def} are satisfied, condition 3.\ fails in plane 2. Here, the increase in the number of entries $\le1$ from one row to the next should be between 0 and 1. However, from row 2 to row 3, there are two additional such entries.

Similarly to ASMs and monotone triangles, the set $\Xi^{-1}(\C_n)$ is in bijection with the set of monotone hypertriangles of order $n$. Before proving this, we define the partial sum hypermatrix $P(A)$ of a hypermatrix $A$ of order $n$ as follows.
\[P(A)_{i,j,k} = \sum_{a=1}^i\sum_{b=1}^k A_{a,j,b}\]
The following are a hypermatrix $A \in\Xi^{-1}(\C_n)$ and its corresponding partial-sum hypermatrix.
\begin{equation}\label{eq:hyper to triangle}
A = \left(\begin{array}{rrr}
1&0&0\\
0&1&0\\
0&0&1\\
\end{array}\right)
\nearrow
\left(\begin{array}{rrr}
0&1&0\\
1&-1&1\\
0&1&0\\
\end{array}\right)
\nearrow
\left(\begin{array}{rrr}
0&0&1\\
0&1&0\\
1&0&0\\
\end{array}\right)\end{equation}
\[P(A) = \left(\begin{array}{rrr}
1&0&0\\
1&1&0\\
1&1&1\\
\end{array}\right)
\nearrow
\left(\begin{array}{rrr}
1&1&0\\
2&1&1\\
2&2&2\\
\end{array}\right)
\nearrow
\left(\begin{array}{rrr}
1&1&1\\
2&2&2\\
3&3&3\\
\end{array}\right)\]
Let $A$ be a hypermatrix of order $n$ such that all entries of $P(A)$ are non-negative integers. We associate to $A$ a hypertriangle $\Delta(A)$, defined such that the entry $j$ of row $i$ in plane $k$ of $P(A)$ is the number of times $j$ occurs in row $i$ of plane $k$ of $\Delta(A)$. The entries in each row of $\Delta(A)$ are listed in increasing order. The hypertriangle $\Delta(A)$ of $A$ in~\eqref{eq:hyper to triangle} results in the monotone hypertriangle in Example~\ref{monotone hyp}.

\begin{lemma}
    Let $A$ be a hypermatrix of order $n$. Then $A \in \Xi^{-1}(\C_n)$ if and only if $\Delta(A)$ is a monotone hypertriangle.
\end{lemma}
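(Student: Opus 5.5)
The plan is to show that the counting data of $\Delta(A)$ is exactly the corner-sum hypermatrix $C=\Xi(A)$. Then each condition of Definition~\ref{monotone def} becomes one of the defining conditions of Definition~\ref{corner-sum def}, and both implications can be read off directly.

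\textbf{Key identity.} By construction, the number of entries $\le j$ in row $i$ of plane $k$ of $\Delta(A)$ is
\[
\sum_{b=1}^{j} P(A)_{i,b,k}=\sum_{a=1}^{i}\sum_{b=1}^{j}\sum_{c=1}^{k}A_{a,b,c}=\Xi(A)_{i,j,k}.
\]
The multiplicity of $j$ in that row is $P(A)_{i,j,k}=C_{i,j,k}-C_{i,j-1,k}$. Throughout I use the conventions $C_{0,j,k}=C_{i,0,k}=C_{i,j,0}=0$, which amount to regarding row $0$ and plane $0$ of $\Delta(A)$ as empty.

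\textbf{Forward direction ($C\in\C_n$).} First I check that $\Delta(A)$ is well defined. Since $P(A)_{i,j,k}=C_{i,j,k}-C_{i,j-1,k}\ge\max(0,i+k-n)\ge 0$, every multiplicity is a non-negative integer. The conditions then translate as follows.
\begin{enumerate}
\item Row $i$ of plane $k$ has $C_{i,n,k}=ik$ entries, which is the boundary condition.
\item The multiplicity of $j$ lies in $\{\max(0,i+k-n),\dots,\min(i,k)\}$ by the $j$-direction difference condition.
\item The increase from row $i-1$ to row $i$ in the number of entries $\le j$ is $C_{i,j,k}-C_{i-1,j,k}$. It lies in $[\max(0,j+k-n),\min(j,k)]$ by the $i$-direction condition.
\item The increase from plane $k-1$ to plane $k$ is $C_{i,j,k}-C_{i,j,k-1}$. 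It is controlled by the $k$-direction condition.
\end{enumerate}
Condition~1's increasing arrangement holds by definition of $\Delta$. The entries lie in $[n]$ because only multiplicities of $j\in[n]$ are recorded.

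\textbf{Converse ($\Delta(A)$ monotone).} In this direction the multiplicities are non-negative by assumption, so $P(A)\ge 0$ and $\Delta(A)$ makes sense. Conditions~2--4 give all three families of difference bounds. The cases $i=1$ and $k=1$ are covered by the zero-row and zero-plane convention.

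The main point needing care is the boundary conditions $C_{n,j,k}=jk$ and $C_{i,j,n}=ij$, because Definition~\ref{monotone def} does not state them explicitly. I would derive them from condition~2 at the extreme values:
\begin{itemize}
\item For $i=n$, the bounds are $\max(0,k)=k=\min(n,k)$. So each symbol occurs exactly $k$ times in row $n$ of plane $k$, giving $C_{n,j,k}=jk$.
\item For $k=n$, each symbol occurs exactly $i$ times, giving $C_{i,j,n}=ij$.
\end{itemize}
The remaining boundary value $C_{i,n,k}=ik$ is condition~1. The zero boundaries hold trivially by the conventions above.

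Hence $C\in\C_n$, which completes the equivalence.
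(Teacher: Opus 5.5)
Your proposal is correct and follows the same route as the paper. Both identify the number of entries $\le j$ in row $i$ of plane $k$ of $\Delta(A)$ with $\Xi(A)_{i,j,k}=\sum_{x\le j}P(A)_{i,x,k}$, then match the conditions of Definition~\ref{monotone def} term by term with those of Definition~\ref{corner-sum def}; your derivation of $C_{n,j,k}=jk$ and $C_{i,j,n}=ij$ from condition~2 at $i=n$ and $k=n$ is exactly the paper's boundary condition $P_{n,j,k}=k$, $P_{i,j,n}=i$. One small refinement: the row-$0$-to-row-$1$ and plane-$0$-to-plane-$1$ cases need no convention, because by conditions~1 and~2 row $1$ of plane $k$ (resp.\ row $i$ of plane $1$) consists of $k$ (resp.\ $i$) distinct symbols, which already forces the count of entries $\le j$ into $[\max(0,j+k-n),\min(j,k)]$ (resp.\ $[\max(0,i+j-n),\min(i,j)]$).
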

\begin{proof}
    Let $C = \Xi(A)$ and $P = P(A)$. Note that
    \[C_{i,j,k} = \sum_{x=1}^j P_{i,x,k}.\]
    
    By the definition of corner-sum hypermatrices, $A \in \Xi^{-1}(\C_n)$ if and only if $P$ satisfies each of the following for all $i,j,k \in [n]$.
    \begin{enumerate}
        \item $P_{n,j,k} = k$, $P_{i,j,n} = i$, and $\sum_{x=1}^n P_{i,x,k} = ik$, by the boundary conditions.
        \item $C_{i,j,k} - C_{i,j-1,k} = \sum_{x=1}^j P_{i,x,k} - \sum_{x=1}^{j-1} P_{i,x,k} = P_{i,j,k} \in \{\max(0,i+k-n),\dots,\min(i,k)\}$.
        \item $C_{i,j,k} - C_{i-1,j,k} = \sum_{x=1}^j P_{i,x,k} - \sum_{x=1}^j P_{i-1,x,k} \in \{\max(0,j+k-n),\dots,\min(j,k)\}$.
        \item $C_{i,j,k} - C_{i,j,k-1} = \sum_{x=1}^j P_{i,x,k} - \sum_{x=1}^j P_{i,x,k-1} \in \{\max(0,i+j-n),\dots,\min(i,j)\}$.     
    \end{enumerate}
    These conditions are equivalent to the conditions of Definition \ref{monotone def}.
\end{proof}

Finally, we prove that monotone hypertriangles  also encode the Bruhat order and the lattice structure of~$\C_n$ via entrywise comparisons. Note that here the inequalities are reversed compared to Theorem~\ref{cornersum_implies_bruhat_entrywise}.

\begin{theorem}\label{partialsum_implies_bruhat_entrywise}
Let $A,B \in \Xi^{-1}(\C_n)$. Then $A \preceq_B B$ if and only if $\Delta(A)_{i,j,k} \le \Delta(B)_{i,j,k}$ entrywise.
\end{theorem}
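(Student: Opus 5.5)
The plan is to reduce the statement to the corner-sum criterion. By the extension of Theorem~\ref{cornersum_implies_bruhat_entrywise} to $\Xi^{-1}(\C_n)$ noted after Theorem~\ref{thm:CSlattice}, for $A,B\in\Xi^{-1}(\C_n)$ we have $A\preceq_B B$ if and only if $\Xi(A)\ge\Xi(B)$ entrywise. So it suffices to show that
\[
\Xi(A)\ge\Xi(B)\ \text{entrywise}\iff \Delta(A)_{i,j,k}\le\Delta(B)_{i,j,k}\ \text{for all } i,j,k.
\]

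The first step is to reinterpret the corner-sum entries in terms of the hypertriangle, as in the proof of the preceding lemma. There we have $\Xi(A)_{i,j,k}=\sum_{x=1}^j P(A)_{i,x,k}$. By the definition of $\Delta(A)$, the value $x$ occurs exactly $P(A)_{i,x,k}$ times in row $i$ of plane $k$. Hence $\Xi(A)_{i,j,k}$ is exactly the number of entries $\le j$ in the row $\Delta(A)_{i,*,k}$. The preceding lemma guarantees that $P(A)$ is nonnegative, so $\Delta(A)$ is well defined. Both $\Delta(A)_{i,*,k}$ and $\Delta(B)_{i,*,k}$ are weakly increasing sequences of the same length $ik$ with entries in $[n]$, by condition~1 of Definition~\ref{monotone def}.

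The second step is the elementary fact about sorted sequences. Let $a_1\le\dots\le a_m$ and $b_1\le\dots\le b_m$, and write $N_a(j)=|\{t: a_t\le j\}|$ and $N_b(j)=|\{t:b_t\le j\}|$. Then $a_t\le b_t$ for all $t$ if and only if $N_a(j)\ge N_b(j)$ for all $j$.
\begin{itemize}
\item For the forward direction: if $b_t\le j$ then $a_t\le b_t\le j$, so every index counted in $N_b(j)$ is also counted in $N_a(j)$.
\item For the converse: fix $t$ and set $j=b_t$. Since $b$ is sorted, $N_b(j)\ge t$, so $N_a(j)\ge t$. Since $a$ is sorted, this forces $a_t\le j=b_t$.
\end{itemize}

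Applying this row by row, for every $(i,k)$ and every $j\in[n]$, gives the displayed equivalence. The cases $j=0$ and $i=0$ or $k=0$ involve only fixed boundary values, which agree for $A$ and $B$. Combining with the corner-sum criterion completes the proof.

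The only point requiring care is that the corner-sum characterisation of $\preceq_B$ holds on all of $\Xi^{-1}(\C_n)$, not just for ASHMs. This holds because the proof of Theorem~\ref{cornersum_implies_bruhat_entrywise} only uses linearity of $\Xi$ and $\Xi(T)$ being a unit vector for a contiguous positive T-block. I expect no real obstacle beyond this bookkeeping.
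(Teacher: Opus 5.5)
Your proposal is correct and follows the paper's proof. Both reduce to the corner-sum criterion of Theorem~\ref{cornersum_implies_bruhat_entrywise} via $\Xi(A)_{i,j,k}=\sum_{x=1}^{j}P(A)_{i,x,k}$, read this as the number of entries $\le j$ in row $i$ of plane $k$ of $\Delta(A)$, and turn domination of these counts into entrywise domination of the sorted rows. The difference is that you prove the sorted-sequence equivalence and the well-definedness of $\Delta(A)$, which the paper only asserts in one line.
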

\begin{proof}
By Theorem~\ref{cornersum_implies_bruhat_entrywise}, the following is true for all $i,j,k \in [n]$ if and only if $A \preceq_B B$. 
\[\sum_{x=1}^j P(A)_{i,x,k} \ge \sum_{x=1}^j P(B)_{i,x,k}\]
This means that every entry in row $i$ of plane $k$ of $P(A)$ occurs at an index less than or equal to that of $P(B)$. By definition, this means $\Delta(A)_{i,j,k} \le \Delta(B)_{i,j,k}$ entrywise.
\end{proof}

\section{Enumeration and open problems}\label{sec:conclusions}

We conclude our paper with a few questions and further research directions inspired by our study of the new Bruhat order on Latin squares. Further, we record some enumerative facts related to the objects introduced or discussed in this paper.

% \paragraph{Join-irreducibles in the corner-sum Latin lattice}
\medskip

Our definition of the Bruhat order on Latin squares and its extension to a lattice was inspired by the classical analogous result for permutations and ASMs. In that case, owing to the fact that the Bruhat order on permutations is \emph{dissective}, the lattice of ASMs arises as the Dedekind-MacNeille completion of permutations, and isomorphic, as a distributive lattice, to the poset of downsets of the join-irreducible elements of $(S_n,\preceq_B)$, cf.~\cite[Theorem~7]{reading} and \cite[Theorem~4.4]{lattice}. In particular, these join-irreducible elements have an explicit combinatorial characterisation: they are bigrassmannian permutations, i.e.~permutations with exactly one right and exactly one left \emph{descent}. The following two problems remain therefore open:
\begin{itemize}
    \item Characterise the join-irreducible elements of $\left(\Xi^{-1}(\C_n),\preceq_B\right)$.
    \item Find the Dedekind-MacNeille completion of our Bruhat order on Latin squares.
\end{itemize}

\medskip

 In Theorem \ref{thm:coverLS}, we characterise the covering relation in the Latin poset in terms of subarrays. Examples suggest that it should be possible to characterise this relation more efficiently, by restricting to non-commuting decreasing replacements containing the decreasing replacement $X$ we are considering, and restricting to the convex hull of $X$ otherwise. In Section \ref{sec:DM}, Lemma~\ref{line sum} and Theorem~\ref{partial sum} provide a set of necessary conditions for a given hypermatrix to be in $\Xi^{-1}(\C_n)$. Similarly, Theorem~\ref{interlacing} provides a set of necessary conditions for a given array to be a monotone hypertriangle. However, neither set of conditions is also sufficient. It would therefore be of interest to improve upon these results.

\medskip

A classical result due to Birkhoff-von Neumann states that the vertices of the polytope of doubly stochastic matrices with non-negative entries coincide with permutation matrices. The ASM polytope was introduced independently by Behrend and Knight~\cite{behrend} and by Striker~\cite{striker}. Striker proved the polytope is the convex hull of alternating sign matrices, and provided a description by inequalities.

In higher dimensions, it is known that an analogue of the Birkhoff-von Neumann theorem does not hold. Indeed, tristochastic tensors were shown  in \cite{linial-luria} to have vertices that are not Latin squares. However, in light of our definition of corner-sum hypermatrices and the study of the corner-sum Latin lattice, we consider a next natural step to study the geometry of the lattice $\C_n$.

%\begin{problem}
 
%\end{problem}

\medskip

We now turn our attention to some enumerative aspects related to the objects studied throughout this paper. The following table gives the known numbers of Latin squares, ASHMs, and PASHMs of order $n \leq 6$.

\[\begin{array}{r|cccccc|}
n						& 1	& 2	& 3	& 4		& 5 & 6\\
\hline
\text{Latin squares}			& 1	& 2	& 12	& 576		& 161,280 & 812,851,200 \\
\text{ASHMs} 				& 1	& 2	& 14	& 924		& 852,960 & 27,729,279,360 \\
\text{PASHMs}				& 1	& 2	& 18	& 2,424	& 14,366,880 &  \\
\text{Corner-sum hypermatrices}	& 1 	& 2	& 35	& 62,858	& & \\
\hline
\end{array}\]

The numbers of ASHMs for $n\le5$ and PASHMs for $n \le 4$ were calculated and verified by two separate computer searches; one program generated all sequences of length $n$ consisting of $n \times n$ ASMs and counted those which formed $n \times n \times n$ ASHMs or PASHMs, the other program counted the number of corner-sum hypermatrices of order $n$ that correspond to an ASHM or PASHM. The number of ASHMs of order $n=6$ was computed by Ludovic Schwob, and is available on the OEIS \cite{OEIS}. As mentioned in Section~\ref{sec:invrank}, it would be interesting to see if our study of the corner-sum Latin lattice could prove useful in providing bounds for the numbers of the various objects at play, particularly Latin squares themselves.

\subsection*{Acknowledgements}
AC would like to thank Tobias Rossmann for helpful conversations. This research was supported by Research Ireland through grant 22/FFP-P/11449.

\end{document}